\newtheorem{theorem}{Theorem}[section]
\newtheorem{proposition}[theorem]{Proposition}
\newtheorem{corollary}[theorem]{Corollary}
\newtheorem{lemma}[theorem]{Lemma}
\newtheorem{definition}[theorem]{Definition}
\newtheorem{remark}[theorem]{Remark}
\numberwithin{equation}{section}
\title[Dispersive estimates]
{Dispersive estimates for inhomogeneous fourth-order Schr\"odinger operator in 3D with zero energy obstructions }
\author{Hongliang Feng}
\address {Hongliang Feng, School of Mathematics Sciences, Chongqing Normal University, Chongqing, 401131, P.R. China}
\email{fenghl@cqnu.edu.cn}
\date{\today}
\keywords{Dispersive estimates, Resonance, Zero eigenvalue}
\begin{document}
\maketitle
\begin{abstract}
We study the $L^1-L^\infty$ dispersive estimate of the inhomogeneous fourth-order Schr\"{o}dinger operator $H=\Delta^{2}-\Delta+V(x)$ with zero energy obstructions in $\mathbf{R}^{3}$.  For the related propagator $e^{-itH}$,  we prove that  for  $0<|t|\leq 1$, then $e^{-itH}P_{ac}(H)$ satisfies the $|t|^{-3/4}$-dispersive estimate.  For $|t|>1$, we prove that:\,\, 1) if zero is a regular point of $H$, then  $e^{-itH}P_{ac}(H)$ satisfies the $|t|^{-3/2}$- dispersive estimate.\,\, 2) if zero is purely a resonance of $H$,  there exists a time dependent operator $\mathcal{F}_{t}$  such that  $e^{-itH}P_{ac}(H)-\mathcal{F}_{t}$ satisfies the $|t|^{-3/2}$- dispersive estimate.\,\, 3) if zero is purely an eigenvalue or zero is both an eigenvalue and a resonance of $H$, then there exists a time dependent  operator $\mathcal{G}_{t}$  such that  $e^{-itH}P_{ac}(H)-\mathcal{G}_{t}$ satisfies the $|t|^{-3/2}$-dispersive estimate. Here $\mathcal{F}_{t}$ and $\mathcal{G}_{t}$ satisfy the $|t|^{-1/2}$-dispersive estimate.
\end{abstract}

\tableofcontents

\section{Introduction}
\setcounter{equation}{0}

In this paper we consider the $L^1-L^{\infty}$ dispersive estimates of the inhomogeneous fourth-order Schr\"odinger operator
$$ H=\Delta^2-\Delta+V(x),\,\, H_{0}=\Delta^{2}-\Delta,\,\, x\in\mathbf{R}^3$$
in $L^{2}(\mathbf{R}^{3})$, where $V(x)$ is a real-valued function satisfies $|V(x)|\lesssim(1+|x|)^{-\beta}$ with some $\beta>0$.

The considered operator originate from the following fourth order Schr\"odinger equation
\begin{equation}\label{IFNLS}
i\partial_{t}u+\Delta u+\varepsilon\Delta^2 u+|u|^{2p}u=0,\,\, u: \mathbf{R}\times\mathbf{R}^{d}\rightarrow \mathbf{C},\,\, \varepsilon\in\mathbf{R}.
\end{equation}
Equation \eqref{IFNLS} was introduced by Karpman \cite{Karpman-1994, Karpman-1996}  and Karpman and Shagalov \cite{Karpman-Shagalov-2000} to take into account the role of small fourth-order dispersion terms in the propagation of intense laser beams in a bulk medium with Kerr nonlinearity. When $\varepsilon=0, d=2$ and $p=1$, this corresponds to the canonical model. When $2<dp<4$,  Karpman and Shagalov \cite{Karpman-Shagalov-2000} showed, among other things, that the waveguides induced by the nonlinear Schr\"odinger equation become stable when $|\varepsilon|$ is taken sufficiently large. Then  equation  \eqref{IFNLS}  is predominantly governed by the corresponding homogeneous fourth-order equation
\begin{equation}\label{FNLS}
i\partial_{t}u+\varepsilon \Delta^2 u+|u|^{2p}u=0,\,\,\, \varepsilon\in\mathbf{R}\setminus\{0\}.
\end{equation}
The homogeneous fourth-order Schr\"odinger equation \eqref{FNLS}  has been  widely investigated in \cite{Miao-Zheng-FNLS-homo-2016, Hayashi-Navarro-Naumkin-JDE-2016, Hayashi-Naumkin-ZAMP-2015, Miao-Xu-Zhao-FNLS-homo-2011, Pausader-Shao-homoFNLS-2010,  Miao-Xu-Zhao-FNLS-homo-2009, Fibich-Ilan-Papanicolaou-2002} and references therein.  The inhomogeneous fourth-order equation  \eqref{IFNLS} brings some interesting questions because it does not enjoy scaling invariance. The problem \eqref{IFNLS} has been recently investigated by \cite{Saut-Segata-JMAA-2020, Saut-Segata-DCDS-2019, Ruzhansky-Wang-Zhang-inhomoFNLS-2016, Natali-Pastor-SIAM-2015, Segata-inhomoFNLS-2015, Hayashi-Naumkin-JMP-2015, Pausader-Xia-inhomoFNLS-2013, Segata-inhomoFNLS-2011, Jiang-Pausader-Shao-inhomoFNLS-2010} and references therein.

In this paper, we are interesting in the inhomogeneous fourth-order operator $H=\Delta^2-\Delta+V(x)$ with decaying potential in 3D. We devote to establish the $L^{1}-L^{\infty}$ dispersive estimate for $e^{-itH}$ which plays key role in related nonliner dispersive problems, see e.g. \cite{TT2, Cazenave}.  For the homogeneous fourth-order operator $\Delta^2+V$,  linear dispersive estimates have recently been  studied in \cite{Burak-Green-Toprak-3D, Green-Toprak-4D, FSY, FSWY-TAMS-2020}. In \cite{FSY, FSWY-TAMS-2020}, they obtained the Kato-Jensen decay estimates of $e^{-it(\Delta^2+V)}$ with presence of zero resonance or zero eigenvalue in $\mathbf{R}^{d}$ with $d\geq5$.  In \cite{Burak-Green-Toprak-3D, Green-Toprak-4D}, they established the $L^1-L^{\infty}$ dispersive estimates of $e^{-it(\Delta^2+V)}$ with presence of zero resonance or zero eigenvalue in three and four dimensions respectively. However, for $d=1, 2$ and $d\geq5$, the $L^1-L^{\infty}$ dispersive estimates of $e^{it(\Delta^2+V)}$ is still open up to my knowledge.  The $L^1-L^{\infty}$  dispersive estimates of the inhomogeneous fourth-order operators $\Delta^2\pm\Delta+V$ with nonvanish potential has  very few results so far.    In this sequel, we will restruct our attention to the $L^1-L^{\infty}$ dispersive estimates of  $H=\Delta^2-\Delta+V$ in 3-dimension with zero energy obstructions. In the coming paper, we will study  the $L^1-L^{\infty}$ dispersive estimates for the inhomogeneous operator $\Delta^2+\Delta+V$. The operator $\Delta^2+\Delta+V$ is more complicated than $H=\Delta^2-\Delta+V$.  It has two finite thresholds $-1/4$ and $0$ while $H=\Delta^2-\Delta+V$ has only zero threshold.

For free propagators $e^{-it(\Delta^2-\varepsilon\Delta)}$ with $\varepsilon\in\{-1, 0, 1 \}$,  Ben-Artzi, Koch and Saut  in \cite{BenArtzi-Koch-Saut-2000} established the pointwise estimates for  kernels of $e^{-it(\Delta^2-\varepsilon\Delta)}$.
When $\epsilon=1$, let $I_{1}(t, x)$ be the kernel of $e^{-it(\Delta^2-\Delta)}$.  If $0<t\leq1$ or $|x|\geq t$, then
\begin{equation*}
  \big|D^{\alpha}I_{1}(t, x)\big|\leq ct^{-(d+|\alpha|)/4}\ \Big(1+t^{-1/4}|x|\Big)^{(|\alpha|-d)/3},\,\, \alpha\in\mathbf{N}^{d},\,\,  x\in\mathbf{R}^d.
\end{equation*}
If $|x|\leq t$ and $t\geq1$, then
\begin{equation*}
  \big|D^{\alpha}I_{1}(t, x)\big|\leq ct^{-(d+|\alpha|)/2}\ \Big(1+t^{-1/2}|x|\Big)^{|\alpha|},\,\, \alpha\in\mathbf{N}^{d},\,\,  x\in\mathbf{R}^d.
 \end{equation*}
The above pointwise estimates imply the $L^1-L^{\infty}$ dispersive estimates of $e^{-itH_{0}}$:
\begin{itemize}
	\item For $0<|t| \leq 1$,
	\begin{equation}\label{freeL1Linftysmallt}
  \big\|e^{-itH_{0}}\big\|_{L^{1}(\mathbf{R}^{d})\rightarrow L^{\infty}(\mathbf{R}^{d})}\le
  	c |t|^{-d/4}.
\end{equation}
\vspace{0.1cm}
    \item For $|t|>1$,
\begin{equation}\label{freeL1Linftylarget}
  \big\|e^{-itH_{0}}\big\|_{L^{1}(\mathbf{R}^{d})\rightarrow L^{\infty}(\mathbf{R}^{d})}\le  c |t|^{-d/2}.
\end{equation}
\end{itemize}
Note that the $L^1-L^{\infty}$ estimate of free propagator $e^{-it(\Delta^2-\Delta)}$ has different time bound  for $|t|>1$ and $0<|t|\leq1$.  In contrast to the homogeneous operators, the $L^1-L^{\infty}$ estimate of propagator related to Laplacian $-\Delta$ and bi-harmonic operator $\Delta^2$,  the  time bounds of small time $0<|t|\leq 1$ and large time $|t|>1$ are the same while the inhomogeneous operator $H_{0}=\Delta^2-\Delta$ is not.

For $H=\Delta^2-\Delta+V$ with real-valued, polynomial decaying potential $V(x)$, we will establish the $L^1-L^{\infty}$ estimate in $\mathbf{R}^{3}$ for the perturbed propagator $e^{-itH}$ with the presence of zero energy obstructions. Here zero energy obstructions means there exists distributional solution to $H\psi=0$ with $\psi\in \mathcal{L}^{2}_{-1/2}(\mathbf{R}^3)$.  We provide a full classification of the zero energy obstructions: zero is a regular point, zero is a resonance and zero is an eigenvalue of $H$, see Lemma \ref{regular}-\ref{stop} below.  Follwoing the terminology of Kato in \cite{JK}, we say zero is a regular point of $H$ means zero is not an eigenvalue nor a resonance of $H$.  \emph{For $H=\Delta^2-\Delta+V$ in $\mathbf{R}^3$, we say zero is  a resonance of $H$ if there exists distributional solutions $\psi\in  \mathcal{L}^{2}_{-1/2}(\mathbf{R}^3)\setminus L^2(\mathbf{R}^3)$ such that $H\psi=0$.} Here $ \mathcal{L}^{2}_{-1/2}(\mathbf{R}^3)= \cap_{s<-\frac{1}{2}}L^{2}_{s}(\mathbf{R}^3)$ which contains $L^{2}(\mathbf{R}^3)$ as a true subset.  Actually, the property of $H=\Delta^2-\Delta+V$ at zero energy behaves similar to Schr\"odinger operator $-\Delta+V$. Recall the classification of zero energy obstructions for $-\Delta+V$ (under suitable assumptions on $V$) in $\mathbf{R}^3$, Kato in \cite{JK} disscussed zero energy as regular point, resonance and eigenvalue of $-\Delta+V$. Furthermore, the definition of zero resonance for $H$ matches Kato's zero resonance definition of $-\Delta+V$ in \cite{JK}.

Before comparing $H=\Delta^2-\Delta+V$ with $-\Delta+V$ and $\Delta^2+V$,  it is necessary to state our main theorem: the $L^{1}-L^{\infty}$ dispersive estimates of $e^{-itH}$ with zero energy obstructions.  Let $P_{ac}(H)$ denotes the projection onto the absolutely continuous spectrum space of $H$.

\begin{theorem}\label{timedecay}
Considering $H=\Delta^2-\Delta+V$ with real-valued potential satisfying $|V(x)|\lesssim(1+|x|)^{-\beta}$ for some $\beta>0$ choosing as following. Assume that $H$ has no positive embedded eigenvalues.

For $1<|t|\in\mathbf{R}$,  then:
\begin{enumerate}
	\item If $0$ is a regular point of $H$, let $\beta>3$, then
 \begin{equation*}\label{lowenergylarget}
  \big\|e^{-itH}P_{ac}(H) u\big\|_{L^{\infty}(\mathbf{R}^{3})}\lesssim\,\, |t|^{-3/2}\|u\|_{L^{1}(\mathbf{R}^{3})}.
\end{equation*}
    \item If $0$ is purely a resonance of $H$, let $\beta>5$, then
    \begin{equation*}\label{lowenergylarget}
  \big\|e^{-itH}P_{ac}(H) u-\mathcal{F}_{t}u\big\|_{L^{\infty}(\mathbf{R}^{3})}\lesssim\,\, |t|^{-3/2}\|u\|_{L^{1}(\mathbf{R}^{3})},
\end{equation*}
where $\mathcal{F}_{t}$ is a time dependent operator satisfies $\big\|\mathcal{F}_{t}\big\|_{L^{1}\rightarrow L^{\infty}}\lesssim |t|^{-1/2}$.
\vspace{0.2cm}
    \item If $0$ is purely an eigenvalue or $0$ is both resonance and eigenvalue of $H$, let $\beta>7$, then
    \begin{equation*}\label{lowenergylarget}
  \big\|e^{-itH}P_{ac}(H) u-\mathcal{G}_{t}u\big\|_{L^{\infty}(\mathbf{R}^{3})}\lesssim\,\, |t|^{-3/2}\|u\|_{L^{1}(\mathbf{R}^{3})},
\end{equation*}
where $\mathcal{G}_{t}$ is a time dependent operator satisfies $\big\|\mathcal{G}_{t}\big\|_{L^{1}\rightarrow L^{\infty}}\lesssim |t|^{-1/2}$.
\end{enumerate}

For $0<|t|\leq 1$, with $0$ in each spectral cases, let $\beta$ chosen as in $|t|>1$ respectively, then
\begin{equation*}\label{smallt}
  \big\|e^{-itH}P_{ac}(H) u\big\|_{L^{\infty}(\mathbf{R}^{3})}\lesssim\,\, |t|^{-3/4}\|u\|_{L^{1}(\mathbf{R}^{3})}.
\end{equation*}
\end{theorem}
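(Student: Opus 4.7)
The plan is to start from the Stone formula
\begin{equation*}
e^{-itH}P_{ac}(H) = \frac{1}{2\pi i}\int_0^{\infty} e^{-it\lambda}\bigl[R_V^{+}(\lambda)-R_V^{-}(\lambda)\bigr]\,d\lambda, \qquad R_V^{\pm}(\lambda)=(H-\lambda\mp i0)^{-1},
\end{equation*}
and split the integrand with smooth cutoffs into a low-energy piece (which carries all the obstruction information) and a high-energy piece. The crucial bridge between the inhomogeneous fourth-order operator and the Laplacian is the partial fraction identity
\begin{equation*}
R_0^{\pm}(\lambda) = \frac{1}{\sqrt{1+4\lambda}}\Bigl[R_{-\Delta}^{\pm}(z_+(\lambda)) - R_{-\Delta}(z_-(\lambda))\Bigr], \qquad z_{\pm}(\lambda)=\tfrac{-1\pm\sqrt{1+4\lambda}}{2},
\end{equation*}
obtained by factoring $\xi^{4}+\xi^{2}-\lambda=(\xi^{2}-z_+)(\xi^{2}-z_-)$. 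At low energy $z_+\sim\lambda$ and $z_-\sim -1$, so the first summand mimics the Schr\"odinger free resolvent near zero and drives the analogy with $-\Delta+V$ emphasized in the introduction, while the second contributes an exponentially decaying Bessel-type kernel that is smooth in $\lambda$ and can be absorbed into remainder terms.

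For the low-energy contribution I would apply the symmetric resolvent identity
\begin{equation*}
R_V^{\pm}(\lambda)=R_0^{\pm}(\lambda)-R_0^{\pm}(\lambda)\,v\,M^{\pm}(\lambda)^{-1}\,v\,R_0^{\pm}(\lambda), \qquad M^{\pm}(\lambda)=U+vR_0^{\pm}(\lambda)v,
\end{equation*}
with $V=U|V|$, $v=|V|^{1/2}$, and derive an asymptotic expansion of $M^{\pm}(\lambda)^{-1}$ in $\mu=\sqrt{\lambda}$ as $\mu\to 0^+$, following a Jensen--Nenciu-type procedure adapted to the classification of zero-energy obstructions in Lemma \ref{regular}--Lemma \ref{stop}. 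In the regular case the expansion is bounded with enough smoothness in $\mu$ to allow two integrations by parts (using $d\lambda=2\mu\,d\mu$) and yield the $|t|^{-3/2}$ bound by Van der Corput. In the resonance and eigenvalue cases the inversion produces finite-rank singular terms of the form $\mu^{-k}$ multiplied by projections onto the resonance or eigenspace; these singular contributions are collected into $\mathcal{F}_t$ or $\mathcal{G}_t$, and the integrals $\int_0^{\infty} e^{-it\lambda}\mu^{-k}\chi(\lambda)\,d\lambda$ are evaluated by the substitution $\lambda=\mu^2$ to give the claimed $|t|^{-1/2}$ decay, while the remaining regular part again yields $|t|^{-3/2}$. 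The escalating decay requirements $\beta>3,5,7$ mirror the number of successive inversions on nested kernel projections that the classification requires. For the high-energy contribution, the Born series converges away from $\lambda=0$ in suitable weighted norms, and upon substituting the partial fraction identity each term inherits the kernel bounds for $e^{-itH_0}$ recorded in the introduction, producing $|t|^{-3/4}$ for $0<|t|\leq 1$ and $|t|^{-3/2}$ for $|t|>1$. For the small-time statement no subtraction of $\mathcal{F}_t$ or $\mathcal{G}_t$ is needed because the $|t|^{-1/2}$ bound on the obstruction operators is dominated by $|t|^{-3/4}$ when $|t|\leq 1$.

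The hardest step will be the low-energy analysis in the combined eigenvalue-and-resonance case: inverting $M^{\pm}(\lambda)^{-1}$ on a nested chain of kernel projections, tracking the coefficients of the various singular contributions in powers of $\mu$, verifying that they assemble into finite-rank operators satisfying the $|t|^{-1/2}$ bound claimed for $\mathcal{G}_t$, and controlling the residual regular part with enough smoothness in $\mu$ to survive two integrations by parts. Handling the Bessel-like kernel from $R_{-\Delta}(z_-(\lambda))$ so that its decay compensates the weighted $L^2$ estimates required by the Jensen--Nenciu inversion with only polynomial weight $\beta>7$ on $V$, together with the absence of positive embedded eigenvalues assumed in the theorem, is the main technical obstacle and is precisely what dictates the sharpness of the decay thresholds $\beta$ in each spectral case.
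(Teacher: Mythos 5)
Your overall architecture --- Stone's formula, the splitting identity reducing $R_0^{\pm}$ to Schr\"odinger resolvents, the symmetric resolvent identity with a Jensen--Nenciu expansion of $M^{\pm}(\lambda)^{-1}$ organized by the nested projections, and the collection of singular terms into $\mathcal{F}_t,\mathcal{G}_t$ --- is the paper's. But two steps as written would fail. First, in the eigenvalue case your prescription ``evaluate $\int_0^{\infty}e^{-it\lambda}\mu^{-k}\chi(\lambda)\,d\lambda$ by substituting $\lambda=\mu^2$'' breaks down at $k=2$: the substitution yields $2\int_0^{\infty}e^{-it\mu^2}\mu^{-1}\chi\,d\mu$, which diverges logarithmically at $\mu=0$, so $\mathcal{G}_t$ is not even defined this way. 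The rescue is that the leading coefficient $A_{-2}=S_2(S_2vG_2vS_2)^{-1}S_2$ is the \emph{same} for both boundary values, so in the Stone-formula difference one writes $R_0^{+}vA_{-2}vR_0^{+}-R_0^{-}vA_{-2}vR_0^{-}=(R_0^{+}-R_0^{-})vA_{-2}vR_0^{+}+R_0^{-}vA_{-2}v(R_0^{+}-R_0^{-})$ and uses $|R_0^{+}-R_0^{-}|\lesssim\eta$ to cancel one power of the singularity; only then is the integrand $O(1)$ on $[0,t^{-1/2}]$ and $\mathcal{G}_t=O(|t|^{-1/2})$. Your proposal never invokes this cancellation.

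Second, your high-energy argument rests on ``the Born series converges away from $\lambda=0$,'' which is false on the intermediate range $\lambda\sim 1$, where $\|vR_0^{\pm}(\lambda)v\|_{L^2\to L^2}$ is of order one and need not be a contraction. The paper instead truncates at second order, $R_V^{\pm}=R_0^{\pm}-R_0^{\pm}VR_0^{\pm}+R_0^{\pm}VR_V^{\pm}VR_0^{\pm}$, and controls the last term through a limiting absorption principle: $M^{\pm}(\lambda)$ is invertible for every $\lambda>0$ by the Fredholm alternative \emph{because} $H$ has no positive embedded eigenvalues, and $\|R_V^{\pm}(\lambda)\|_{B(s,-s)}=O(\lambda^{-3/4})$ at infinity. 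Moreover, for $|t|>1$ the region $\eta>t^{-1/2}$ must be split again at $\eta=t$, since near the lower endpoint $\|R_V^{\pm}(\eta^4+\eta^2)\|_{B(s,-s)}$ blows up like $\eta^{-1}$ or $\eta^{-2}$ in the resonance and eigenvalue cases and this middle regime contributes only $|t|^{-1/2}$, not $|t|^{-3/2}$; your proposal has no mechanism for it. Relatedly, you attribute the role of the no-embedded-eigenvalue hypothesis to the Bessel-type kernel and the zero-energy inversion, but that is not where it is used --- it is what makes the resolvent well defined on $(0,\infty)$ away from the threshold.
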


\begin{remark}
	The presence of zero energy obstruction of $H=\Delta^2-\Delta+V$ has no effect on the $L^1-L^{\infty}$ estimate for $0<|t|\leq 1$. Indeed, for $0<|t|\leq 1$, we obtain the natural $|t|^{-3/4}$-time bound  for $H=\Delta^2-\Delta+V$ whether zero is regular or not. This can be confirmed  by  the dispersive estimate \eqref{freeL1Linftysmallt} for free propagator $e^{-itH_{0}}$.
\end{remark}
\begin{remark}
	 It is known that positive embedded eigenvalues may exist for $H=\Delta^2-\Delta+V$  even with compactly supported smooth potential. Note that $\Delta(r^{-1}e^{-br})=b^2r^{-1}e^{-br}$ for $r=|x|$ and any $b>0$.  The absence of embedded eigenvalue is a common standing assumption in dispersive equation papers.  Furthermore, we also notice that for a general selfadjoint operator $\mathcal{H}$ on $L^{2}(\mathbf{R}^d)$, if $\mathcal{H}$ has a simple embedded eigenvalue $\lambda_{0}$, Costin and Soffer in \cite{CoSo} have proved that $\mathcal{H}+\epsilon W$ can kick off the eigenvalue in a small interval around $\lambda_{0}$ under certain small perturbation of potential.
\end{remark}

The result of $L^1-L^{\infty}$ dispersive estimates for Schr\"odinger operator $-\Delta+V$ is quite rich and thoroughly. In the fundamental work \cite{JSS}, Journ\'{e}, Soffer and Sogge first proved the Schr\"odinger propagator $e^{-it(-\Delta+V)}$ satisfies
\begin{equation}\label{Schrodinger}
\big\|e^{-it(-\Delta+V)}P_{ac}(-\Delta+V)\big\|_{L^{1}(\mathbf{R}^d)\rightarrow L^{\infty}(\mathbf{R}^d)}\lesssim |t|^{-d/2},\,\,\, d\geq3,
\end{equation}
with zero is a regular point by using commutator methods.   Later, Rodnianski and Schlag in \cite{RodSchl} proved the bound \eqref{Schrodinger} for $-\Delta+V$ with rough and time-dependent potentials in $3$-dimension by making full use of the kernel of the free propagator $e^{-it\Delta}$ and the uniformly Sobolev estimates. Goldberg and Schlag in \cite{Goldberg-Schlag-1D&3D} proved \eqref{Schrodinger} for $d=1, 3$ in the regular case by using resolvent methods. In  \cite{Erdogan-Burak-Schlag-3D-2004},  Erdo\u{g}an and Schlag established the $L^1-L^{\infty}$ dispersive estimates of $-\Delta+V$ with zero energy obstructions in 3-dimension. For convenience of comparison, here we list their result briefly.  Under  suitible decaying assumption on $|V(x)|$ (see \cite[Theorem 1.1, 1.2]{Erdogan-Burak-Schlag-3D-2004}):
\begin{itemize}
	\item If there is a resonance at energy zero but zero is not an eigenvalue of $-\Delta+V$, then
	$$\big\|e^{-it(-\Delta+V)}P_{ac}(-\Delta+V)-\mathcal{F}_{t}^1\big\|_{L^{1}(\mathbf{R}^3)\rightarrow L^{\infty}(\mathbf{R}^3)}\lesssim |t|^{-3/2}.$$
	\item If there is a resonance at energy zero  and~/~or zero is an eigenvalue  of $-\Delta+V$,  then
	$$\big\|e^{-it(-\Delta+V)}P_{ac}(-\Delta+V)-\mathcal{F}_{t}^2\big\|_{L^{1}(\mathbf{R}^3)\rightarrow L^{\infty}(\mathbf{R}^3)}\lesssim |t|^{-3/2}.$$
\end{itemize}
Here $\mathcal{F}_{t}^{1}, \mathcal{F}_{t}^2$ are time-dependent, finite rank operators satisfying $\|\mathcal{F}_{j}\|_{L^1\rightarrow L^{\infty}}\lesssim |t|^{-1/2},\, \, j=1, 2$.  It is worth pointing out that Goldberg in \cite{Goldberg-GAFA-2006} proved estimate \eqref{Schrodinger} when $d=3$ for $-\Delta+V$ with almost critical potential. Comparing with the dispersive estimate of $H=\Delta^2-\Delta+V$  in Theorem \ref{timedecay},  one can conclude that $H=\Delta^2-\Delta+V$  behaves similar to Schr\"odinger operator $-\Delta+V$ in the sense of $L^1-L^{\infty}$ dispersive estimates for large time  $t>1$. More results about the dispersive estimates for the Schr\"odinger operator $-\Delta+V$, see \cite{Goldberg-Green-2, Goldberg-Green-1, Erdogan-Burak-Green-2D-2013, Fernando-Claudio-Georgi-AA-2011, Schlag, Erdogan-Burak-Schlag-3Dmatrix-2006,  GV, Schlag-CMP, Yajima-CMP-2005, Yajima-JMSJ-95} and references therein.

Recently,  Erdo\u{g}an, Green and Toprak in \cite{Burak-Green-Toprak-3D} established the $L^1- L^{\infty}$ dispersive estimates for $\Delta^2+V$ with zero energy obstruction in 3-dimension. In order to compare $H=\Delta^2-\Delta+V$ with $\Delta^2+V$, we state the results in  \cite{Burak-Green-Toprak-3D} briefly.  In \cite{Burak-Green-Toprak-3D},  under the assumptions that $\Delta^2+V$ has no positive embedded eigenvalue and suitible decaying assumption on $V(x)$, they proved that (see \cite[Theorem 1.1]{Burak-Green-Toprak-3D})
\begin{itemize}
	\item For $0<|t|\leq1$, then
	$$\big\|e^{-it(\Delta^2+V)}P_{ac}(\Delta^2+V)\big\|_{L^{1}(\mathbf{R}^3)\rightarrow L^{\infty}(\mathbf{R}^3)}\lesssim~ |t|^{-3/4}.$$
	\item For $|t|>1$, if zero is regular or there is a first kind resonance at zero of $\Delta^2+V$, then
	$$\big\|e^{-it(\Delta^2+V)}P_{ac}(\Delta^2+V)\big\|_{L^{1}(\mathbf{R}^3)\rightarrow L^{\infty}(\mathbf{R}^3)}\lesssim~ |t|^{-3/4}.$$
	\item For $|t|>1$, if there is a second kind or third kind resonance at zero of $\Delta^2+V$,  then there exist time dependent operators $\mathcal{F}_{t}^{3}$ and $\mathcal{F}_{t}^{4}$ respectively, such that	$$\big\|e^{-it(\Delta^2+V)}P_{ac}(\Delta^2+V)-\mathcal{F}_{t}^{j}\big\|_{L^{1}(\mathbf{R}^3)\rightarrow L^{\infty}(\mathbf{R}^3)}\lesssim |t|^{-3/4},\,\, j=3, 4,$$
	where $\mathcal{F}_{t}^{3}, \mathcal{F}_{t}^4$  satisfy $\|\mathcal{F}_{t}^{j}\|_{L^1\rightarrow L^{\infty}}\lesssim |t|^{-1/4},\, \, j=3, 4$.
\end{itemize}
Notice that the third kind resonance at zero of $\Delta^2+V$  is actually zero eigenvalue according to the definition and classification of threshold subspaces in \cite{Burak-Green-Toprak-3D}.

Comparing to our main Theorem \ref{timedecay}, for $0<|t|\leq 1$, $H=\Delta^2-\Delta+V$ and $\Delta^2+V$ satisfy the same $|t|^{-3/4}$-time bound of the  $L^1-L^{\infty}$ dispersive estimate. However when $|t|>1$ and both in the regular case, we obtain $|t|^{-3/2}$-time decay for $H=\Delta^2-\Delta+V$ while  $\Delta^2+V$  satisfies $|t|^{-3/4}$-time decay. Further,  when $|t|>1$ and both in the zero eigenvalue case, we obtain $|t|^{-1/2}$-time decay for $H=\Delta^2-\Delta+V$ while  $\Delta^2+V$ satisfies $|t|^{-1/4}$-time decay.   In addition, the  $L^1-L^{\infty}$ dispersive estimates of $\Delta^2+V$ in 4-dimension,  please see \cite{Green-Toprak-4D}. For the remaining dimenisonal cases, up to my knowledge, it is still open so far.


As usual, we use spectrum representation theorem to write
\begin{equation*}
e^{-itH}P_{ac}(H)=\frac{1}{2\pi i}\int_{0}^{\infty}e^{-it\lambda}\Big[R_{V}^{+}(\lambda)-R_{V}^{-}(\lambda)\Big]d\lambda,
\end{equation*}
Here, the difference of the perturbed resolvents provides the spectral measure by Stone's formula.   Let $\lambda\in\mathbf{R}^{+}$, we define the limiting resolvent operators by
\begin{align*}
&R^{\pm}_{0}(\lambda):=R_{0}(\lambda\pm i0)=\lim_{\epsilon\downarrow0}\Big(H_{0}-(\lambda\pm i\epsilon)\Big)^{-1};\\
&R_{V}^{\pm}(\lambda):=R_{V}(\lambda\pm i0)=\lim_{\epsilon\downarrow0}\Big(H-(\lambda\pm i\epsilon)\Big)^{-1}.
\end{align*}

For free resolvent $R_{0}(z):=(H_{0}-z)^{-1}$, we have the following splitting identity:
\begin{equation}\label{freeresolventidentity}
  R_{0}(z)=\frac{1}{2\sqrt{1/4+z}}\Bigg[\Big(-\Delta+\frac{1}{2}-\sqrt{1/4+z}\Big)^{-1}-\Big(-\Delta+\frac{1}{2}+\sqrt{1/4+z}\Big)^{-1}\Bigg].
\end{equation}
Here $R_{\Delta}(z)$ denotes the resolvent of $-\Delta$. Since $H_{0}=\Delta^2-\Delta$ is essentially selfadjoint and $\sigma_{ac}(H_{0})=[0, \infty)$, by Weyl's criterion $\sigma_{ess}(H)=[0, \infty)$ for a sufficiently decaying potential.
Note that, by basic calculation, for $z\in\mathbf{C}\setminus[0, \infty)$ with $0<\arg(z)<2\pi$ we have ${\rm Im}~ \big(\sqrt{1/4+z}-1/2\big)^{1/2}>0$. Using  identity \eqref{freeresolventidentity}, for $\lambda>0$ we have
\begin{equation}\label{freelimiting}
	R_{0}^{\pm}(\lambda)=\frac{1}{2\sqrt{1/4+\lambda}}\Bigg[R_{\Delta}^{\pm}\Big(\sqrt{1/4+\lambda}-\frac{1}{2}\Big)-R_{\Delta}\Big(-\frac{1}{2}-\sqrt{1/4+\lambda}\Big)\Bigg].
\end{equation}
Note that $R_{\Delta}\Big(-\frac{1}{2}-\sqrt{1/4+\lambda}\Big)\in B(L^2, L^2)$ since $-\Delta$ has nonnegative spectrum. Further, by the limiting absorption principle (see \cite{Agmon, BenArtzi-Devinatz-1987}), $R_{\Delta}^{\pm}\Big(\sqrt{1/4+\lambda}-\frac{1}{2}\Big)$ is well-defined between weighted $L^2$ spaces. Therefore, $R_{0}^{\pm}(\lambda)$ is also well-defined in weighted $L^2$ spaces. For the perturbed resolvent $R_{V}^{\pm}(\lambda)$,   we will extend this property to $R_{V}^{\pm}(\lambda)$ in Section 5.

In the literature, in order to obtain the asymptotic propertities of the spectral measure of $H=\Delta^2-\Delta+V$ when $\lambda$ close to $0$ and $\infty$, we need to derive the low energy asymptotic expansion of $R^{\pm}_{V}(\lambda)$ when $\lambda\rightarrow0$ and the high energy decay estimate for  $R^{\pm}_{V}(\lambda)$ when $\lambda\rightarrow\infty$. For free resolvent $R_{0}^{\pm}(\lambda)$, we apply identity \eqref{freelimiting} to get the low energy asymptotic expansion and high energy decay estimate by using the known results of Laplacian , see \cite{JK}.  For Schr\"odinger operator $-\Delta+V$, Kato, Jensen and Nenciu's series work \cite{JK, J, J1, JN, JN2} has established quite a standard approach to obtain the  low energy asymptotic expansion and high energy decay estimates. For general operator $P(D)+V$, Murata in \cite{MM} already gave the asymptotic expansion  at threshold point for a class of $P(D)+V$ which includes $H=\Delta^2-\Delta+V$.  Here we do not follow Murata's result in \cite{MM} since Jensen and Nenciu's processes in \cite{JN} is more directly and succinctly.

For perturbed resolvent $R_{V}^{\pm}(\lambda)$, we apply the symmetric resolvent identity:
\begin{equation}
R^{\pm}_{V}(\lambda)=R^{\pm}_{0}(\lambda)-R^{\pm}_{0}(\lambda)v\Big(M^{\pm}(\lambda)\Big)^{-1}vR^{\pm}_{0}(\lambda)
\end{equation}
where $M^{\pm}(\lambda)=U+vR^{\pm}_{0}(\lambda)v$, $v(x)=|V(x)|^{1/2}$ and
\begin{equation*}
U(x)=\begin{cases}
1,\,\, &\,\, V(x)\geq0;\\
-1,\,\, &\,\, V(x)<0.
\end{cases}
\end{equation*}
Since in the low frequency portion of $H=\Delta^2-\Delta+V$, the major operator is $-\Delta+V$. Thus Jensen and Nenciu's approach should works. Hence making use of the approach established in \cite{JN}, we obtain the low energy asymptotic expansion of $R_{V}^{\pm}(\lambda)$ with the presence of zero energy resonance or zero eigenvalue of $H=\Delta^2-\Delta+V$. Further, we identified the zero energy resonance space.

The paper is organized as follows. In Section \ref{free-evolution}, we show the natural dispersive bound for the free propagator. In Section \ref{asymptotic-expansion}, we derive  expansions for the perturbed resolvent around the threshold with the presence of zero energy obstruction and  identified the zero energy resonance space. In Section \ref{lowenergy}, we utilize these expansions to prove the low energy part dispersive estimates in Theorem \ref{timedecay}. In the last section, we develop the high energy decay estimates and the limiting absorption principle for the perturbed resolvent. Then apply the high energy decay estimates to prove the high energy part dispersive estimates in Theorem \ref{timedecay}.


\section{The free evolution}\label{free-evolution}
In this section we obtain expansions for the free resolvent operators $R^{\pm}_{0}(\lambda)$ using identity \eqref{freeresolventidentity} and the representation of the free Schr\"odinger  resolvent.  Using resolvent expansions, we  establish dispersive estimates for the free  evolution $e^{-itH_{0}}$.

Before deducing the expansions, we first introduce some notations for the convenience reading.  We define the weighted $L^2$ spaces:
\begin{equation*}
L^{2}_{s}(\mathbf{R}^3):=\Big\{~f:~ (1+|\cdot|)^{s}f\in L^{2}(\mathbf{R}^3)~\Big\},\,\, s\in\mathbf{R}.
\end{equation*}
For any $s, s'\in\mathbf{R}$, $B(s, s')$ denotes the family of bounded linear operators from $L^{2}_{s}(\mathbf{R}^3)$ to $L^{2}_{s'}(\mathbf{R}^3)$. For an operator $\mathcal{E}(\lambda)$, we write $\mathcal{E}(\lambda)=O_{1}(\lambda^{-a})$ if its kernel $\mathcal{E}(\lambda; x, y)$ satisfies:
\begin{equation*}
\sup_{x, y\in\mathbf{R}^3, \lambda>0}\Big[\lambda^{a}\big|\mathcal{E}(\lambda; x, y)\big|+\lambda^{a+1}\big|\partial_{\lambda}\mathcal{E}(\lambda; x, y)\big|\Big]<\infty.
\end{equation*}
Similarly, we use the notation $\mathcal{E}(\lambda)=O_{1}(\lambda^{-a}g(x, y))$ if $\mathcal{E}(\lambda; x, y)$ satisfies
\begin{equation*}
\big|\mathcal{E}(\lambda; x, y)\big|+\lambda\big|\partial_{\lambda}\mathcal{E}(\lambda; x, y)\big|\lesssim \lambda^{a}g(x, y).
\end{equation*}

Next, we show the process of deducing the asymtotic expansions. Recall the expression of the free Schr\"odinger resolvents in 3-dimension (see e.g. \cite{JK}),
\begin{equation}
	R_{\Delta}^{\pm}(\eta^2; x, y)=\frac{e^{\pm i\eta|x-y|}}{4\pi|x-y|},
\end{equation}
where $\eta=\Big(\sqrt{1/4+\lambda}-\frac{1}{2}\Big)^{1/2}$. Therefore, by \eqref{freelimiting},
\begin{equation}\label{freekernel}
	R^{\pm}_{0}(\lambda; x, y)=\frac{1}{1+2\eta^2}\Bigg(\frac{e^{\pm i\eta|x-y|}}{4\pi|x-y|}-\frac{e^{-\sqrt{1+\eta^2}|x-y|}}{4\pi|x-y|}\Bigg).
\end{equation}

\begin{proposition}\label{freedispersive}
	For the free evolution $e^{-itH_{0}}$, denotes $\mathcal{R}_{0}(\eta; x, y)=R^{+}_{0}(\lambda; x, y)-R^{-}_{0}(\lambda; x, y)$. Then we have the following uniformly bounds:
	
	For $1<t\in\mathbf{R}$,  we have
	\begin{equation}\label{freelowdispersivelarget}
		\sup_{x, y\in\mathbf{R}^3}\Bigg|\int_{0}^{\infty}e^{-it(\eta^4+\eta^2)}\mathcal{R}_{0}(\eta; x, y)~(4\eta^3+2\eta)~d\eta\Bigg|\lesssim\,\, |t|^{-3/2}.
	\end{equation}
	
	For $0<t\leq 1$, we have
	\begin{equation}\label{freedispersivesmallt}
		\sup_{x, y\in\mathbf{R}^3}\Bigg|\int_{0}^{\infty}e^{-it(\eta^4+\eta^2)}\mathcal{R}_{0}(\eta; x, y)~(4\eta^3+2\eta)~d\eta\Bigg|\lesssim\,\, |t|^{-3/4}.
	\end{equation}
\end{proposition}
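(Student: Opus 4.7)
The plan is to substitute the explicit resolvent kernel \eqref{freekernel}, convert the integral into a one-dimensional oscillatory integral depending only on $r:=|x-y|$ and $t$, and then perform a stationary phase / van der Corput analysis. By \eqref{freekernel}, the exponentially decaying piece $e^{-\sqrt{1+\eta^{2}}r}$ is independent of the $\pm$ choice and therefore cancels in $\mathcal{R}_{0}$, leaving $\mathcal{R}_{0}(\eta;x,y)=\frac{i\sin(\eta r)}{2\pi r(1+2\eta^{2})}$. Combined with the algebraic identity $(4\eta^{3}+2\eta)/(1+2\eta^{2})=2\eta$, and extending the integrand to $(-\infty,\infty)$ by parity, the quantity to be estimated becomes $\frac{-i}{2\pi r}J(t,r)$ where
\[
J(t,r):=\int_{-\infty}^{\infty}e^{i\phi(\eta)}\,\eta\,d\eta,\qquad \phi(\eta):=\eta r-t(\eta^{2}+\eta^{4}).
\]
It thus suffices to prove $|J(t,r)|\lesssim r\,|t|^{-3/2}$ when $|t|>1$, and $|J(t,r)|\lesssim r\,|t|^{-3/4}$ when $0<|t|\le 1$, uniformly in $r\ge 0$.

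For the case $|t|>1$, note that $\phi'(\eta)=r-2t\eta(2\eta^{2}+1)$ has a unique real zero $\eta_{0}=\eta_{0}(t,r)>0$ and $|\phi''(\eta)|=2|t|(6\eta^{2}+1)\ge 2|t|$. Writing $s:=r/|t|$, one checks that $\eta_{0}\sim s$ with $|\phi''(\eta_{0})|\sim|t|$ in the regime $s\lesssim 1$, and $\eta_{0}\sim s^{1/3}$ with $|\phi''(\eta_{0})|\sim|t|^{1/3}r^{2/3}$ in the regime $s\gg 1$. Localising with a smooth cutoff supported in a window of radius $|\phi''(\eta_{0})|^{-1/2}$ around $\eta_{0}$, the van der Corput lemma applied to the quadratically non-degenerate piece controls the localised contribution by
\[
\frac{|\eta_{0}|}{|\phi''(\eta_{0})|^{1/2}}\lesssim
\begin{cases}
r\,|t|^{-3/2}, & s\lesssim 1,\\[2pt]
|t|^{-1/2}\le r\,|t|^{-3/2}, & s>1.
\end{cases}
\]
The complementary region is handled by integration by parts against $L=(i\phi')^{-1}\partial_{\eta}$, using $|\phi'(\eta)|\gtrsim|\phi''(\eta_{0})|^{1/2}$ at the cutoff boundary and $|\phi'(\eta)|\gtrsim|t|\eta^{3}$ far from $\eta_{0}$; the resulting integrals are absolutely convergent and of the same order or better.

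For the case $0<|t|\le 1$, I would rescale $\eta=|t|^{-1/4}\mu$ to put the phase in essentially biharmonic form, obtaining $J(t,r)=|t|^{-1/2}\tilde{I}(\tilde r)$ with $\tilde r:=r|t|^{-1/4}$ and
\[
\tilde{I}(\tilde r):=\int_{-\infty}^{\infty}e^{i\tilde\phi(\mu)}\mu\,d\mu,\qquad \tilde\phi(\mu):=\tilde r\mu-\sqrt{|t|}\,\mu^{2}-\mu^{4}.
\]
The target becomes $|\tilde I(\tilde r)|\lesssim\tilde r$ uniformly in $\tilde r\ge 0$ and $|t|\le 1$. For small $\tilde r$ this follows by a Taylor expansion: $\tilde I(0)=0$ by odd symmetry, while $\tilde I'(0)=i\int e^{-i(\sqrt{|t|}\mu^{2}+\mu^{4})}\mu^{2}\,d\mu$ is bounded uniformly in $|t|\le 1$ as a Fresnel-type oscillatory integral (by the substitution $\mu=|t|^{1/4}\nu$, it reduces at $|t|=0$ to a multiple of $\Gamma(3/4)$). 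For $\tilde r\gtrsim 1$, the phase $\tilde\phi$ has a unique critical point $\mu_{0}\sim\tilde r^{1/3}$ with $|\tilde\phi''(\mu_{0})|\sim\mu_{0}^{2}$, and the same two-regime stationary phase / IBP argument as before produces $|\tilde I(\tilde r)|\lesssim 1\le\tilde r$. Multiplying by the prefactor $|t|^{-1/2}$ gives the claimed $|J|\lesssim r|t|^{-3/4}$.

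The main obstacle is that the amplitude $\eta$ is unbounded, so van der Corput cannot be invoked globally and the naive identity $e^{i\phi}=(i\phi')^{-1}\partial_{\eta}e^{i\phi}$ is singular at $\eta_{0}$ and fails as a global IBP. The remedy, used in both cases above, is the cutoff decomposition at the intrinsic scale $|\phi''(\eta_{0})|^{-1/2}$: stationary phase near $\eta_{0}$ absorbs the amplitude's growth through the factor $|\phi''(\eta_{0})|^{-1/2}$, while IBP in the complementary region works because $|\phi'|$ grows polynomially fast enough to dominate $\eta$. A further subtlety is that for small $\tilde r$ the $|\mu|>1$ tail contributes only $O(\tilde r)$ rather than $O(1)$, which is seen only after combining the $\mu>0$ and $\mu<0$ halves so that the odd-in-$\mu$ factor $\sin(\tilde r\mu)\approx\tilde r\mu$ emerges. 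Tracking this bookkeeping across the regimes $s\lesssim 1$ versus $s\gg 1$ (and their rescaled analogue) to confirm that every piece respects the same final decay rate is the main technical work of the proof.
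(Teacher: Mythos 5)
Your reduction of the statement to the scalar oscillatory integral $J(t,r)=\int_{-\infty}^{\infty}e^{i(\eta r-t(\eta^{2}+\eta^{4}))}\eta\,d\eta$ is correct, and the route is genuinely different from the paper's and substantially heavier. The paper never isolates the dependence on $r=|x-y|$: it records the uniform bounds $|\mathcal{R}_{0}(\eta;x,y)|\lesssim\eta(1+2\eta^{2})^{-1}$ and $|\partial_{\eta}\mathcal{R}_{0}(\eta;x,y)|\lesssim(1+2\eta^{2})^{-1}$, integrates by parts once using $(4\eta^{3}+2\eta)\,d\eta=d(\eta^{4}+\eta^{2})$ (the boundary terms vanish), and then applies van der Corput with $|\partial_{\eta}^{2}\big(t(\eta^{4}+\eta^{2})\mp\eta|x-y|\big)|\ge 2|t|$ to the remaining integrals, whose amplitudes are bounded with integrable derivative; for $0<t\le1$ it splits at $\eta=t^{-1/4}$ and bounds the low part by absolute values. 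Your analysis, if completed, yields sharper pointwise information in $r$, but at the cost of a full stationary-phase case study that the statement does not require.

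There is one step that fails as written. You bound the contribution of the window $|\eta-\eta_{0}|\le\delta$, $\delta=|\phi''(\eta_{0})|^{-1/2}$, by $|\eta_{0}|\,|\phi''(\eta_{0})|^{-1/2}$. Van der Corput with amplitude $a(\eta)=\eta\chi(\eta)$ gives $\big(\sup|a|+\|a'\|_{L^{1}}\big)|\phi''(\eta_{0})|^{-1/2}\sim(\eta_{0}+\delta)\,|\phi''(\eta_{0})|^{-1/2}$, and the extra term is $|\phi''(\eta_{0})|^{-1}\sim|t|^{-1}$. Since $\eta_{0}\sim r/t$ and $\delta\sim t^{-1/2}$, one has $\eta_{0}\lesssim\delta$ exactly when $r\lesssim t^{1/2}$, and on that whole range (e.g.\ $r=1$, $t$ large) $|t|^{-1}\gg r|t|^{-3/2}$, so the claimed window bound is not what the localization delivers; the boundary terms of your integration by parts on the complementary region overshoot in the same way. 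The estimate itself is true, but the mechanism is the odd cancellation you invoke only for the small-time tail: when $\eta_{0}\lesssim\delta$ the window lies within $O(t^{-1/2})$ of the origin, and combining the $\pm\eta$ halves to produce $|\sin(\eta r)|\le\eta r$ gives $\int_{|\eta|\lesssim\delta}r\eta^{2}\,d\eta\sim r\delta^{3}=r|t|^{-3/2}$; equivalently one may substitute $u=\eta^{2}$, keep the factor $\sin(r\sqrt{u})$ (which vanishes at $u=0$), integrate by parts against the non-stationary phase $u^{2}+u$, and split the second integration by parts at $u\sim t^{-1}$. Until this cancellation is threaded through every regime with $\eta_{0}\lesssim\delta$ (and its rescaled analogue $\tilde r\lesssim 1$ for $|t|\le1$, where your appeal to $\tilde I'(0)$ should also be upgraded to a uniform bound on $\tilde I'$ over $[0,\tilde r]$), the argument is incomplete.
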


\begin{proof}
	Note that, for the difference of the free resolvents we have
	\begin{equation}\label{differetfree}
		\big|\mathcal{R}_{0}(\eta; x, y)\big|=\frac{\eta}{1+2\eta^2}\Bigg|\frac{e^{i\eta|x-y|}-1+1-e^{-i\eta|x-y|}}{4\pi\eta|x-y|}\Bigg|\lesssim \frac{\eta}{1+2\eta^2}
	\end{equation}
	uniformly in $x, y$ by the mean value theorem. Further,
	\begin{equation}\label{derivativefree}
		\Big|\frac{d}{d\eta}\mathcal{R}_{0}(\eta; x, y)\Big|\leq\frac{1}{1+2\eta^2}\Bigg|\frac{e^{i\eta|x-y|}+e^{-i\eta|x-y|}}{4\pi}\Bigg|+\frac{4\eta}{(1+2\eta^2)^2}\Bigg|\frac{e^{i\eta|x-y|}-e^{-i\eta|x-y|}}{4\pi|x-y|}\Bigg|\lesssim \frac{1}{1+2\eta^2}.
	\end{equation}
	For  large time $t>1$ case, we have
	\begin{equation*}
		\begin{split}
			&~\Bigg|\int_{0}^{\infty}e^{-it(\eta^4+\eta^2)}\mathcal{R}_{0}(\eta; x, y)~(4\eta^3+2\eta)~d\eta\Bigg|\\
			\lesssim &~\frac{1}{|t|}~\Bigg|e^{-it(\eta^4+\eta^2)}\mathcal{R}_{0}(\eta; x, y)\Big|_{0}^{\infty}\Bigg|+\frac{1}{|t|}~\Bigg|\int_{0}^{\infty}e^{-it(\eta^4+\eta^2)}\frac{d}{d\eta}\mathcal{R}_{0}(\eta; x, y)d\eta\Bigg|\\
			\lesssim &~\frac{1}{|t|} \Bigg|\int_{0}^{\infty}e^{-it(\eta^4+\eta^2)}\frac{e^{i\eta|x-y|}+e^{-i\eta|x-y|}}{1+2\eta^2}d\eta\Bigg|+\frac{1}{|t|}\Bigg|\int_{0}^{\infty}e^{-it(\eta^4+\eta^2)}\frac{4\eta^2}{(1+2\eta^2)^2}\frac{e^{i\eta|x-y|}-e^{-i\eta|x-y|}}{4\pi|x-y|}d\eta\Bigg|\\
			\lesssim &~ |t|^{-3/2}, \,\, t>1.
		\end{split}
	\end{equation*}
In the last inequality we use the van der Corput's lemma, see e.g. \cite{Stein}. 	

For small time $0<t\leq 1$, we have
\begin{equation*}
		\int_{0}^{\infty}e^{-it(\eta^4+\eta^2)}\mathcal{R}_{0}(\eta; x, y)~(4\eta^3+2\eta)~d\eta=I_{1}(t; x, y)+I_{2}(t; x, y)
\end{equation*}
where
\begin{align*}
	& I_{1}(t; x, y)=\int_{0}^{t^{-1/4}}e^{-it(\eta^4+\eta^2)}\mathcal{R}_{0}(\eta; x, y)~(4\eta^3+2\eta)~d\eta;\\
	& I_{2}(t; x, y)=\int_{t^{-1/4}}^{\infty}e^{-it(\eta^4+\eta^2)}\mathcal{R}_{0}(\eta; x, y)~(4\eta^3+2\eta)~d\eta.
\end{align*}
For $I_{1}(t; x, y)$ ,  we have
\begin{equation*}
	\Big|I_{1}(t; x, y)\Big|\lesssim \int_{0}^{t^{-1/4}}\Big|\mathcal{R}_{0}(\eta; x, y)(4\eta^3+2\eta)\Big|d\eta\lesssim |t|^{-3/4},\,\, 0<t\leq1.
\end{equation*}	
For $I_{2}(t; x, y)$, integral by parts, we have
\begin{equation*}
	\begin{split}
		\Big|I_{2}(t; x, y)\Big|\lesssim &~ \frac{1}{|t|}\bigg|e^{-it(\eta^4+\eta^2)}\mathcal{R}_{0}(\eta; x, y)\Big|_{t^{-1/4}}^{\infty}\bigg|+\frac{1}{|t|}\int_{t^{-1/4}}^{\infty}\Big|\frac{d}{d\eta}\mathcal{R}_{0}(\eta; x, y)\Big|d\eta\\
		\lesssim &~ \frac{|t|^{-3/4}}{|t|^{1/2}+2}+\frac{1}{|t|(1+|t|^{-1/4})}\lesssim |t|^{-3/4},\,\, 0<t\leq 1.
	\end{split}
\end{equation*}
\end{proof}

\section{The asymptotic expansions of resolvent $R_{V}^{\pm}(\lambda)$}\label{asymptotic-expansion}
\setcounter{equation}{0}

In this section, we aim to obtain the low energy asymptotic expansion of $R_{V}^{\pm}(\lambda)$.  For $\lambda$ near the only threshold point zero, we use the symmetric resolvent identity:
\begin{equation}\label{symmetriresolventidentity}
    R^{\pm}_{V}(\lambda)=R^{\pm}_{0}(\lambda)-R^{\pm}_{0}(\lambda)v\Big(M^{\pm}(\lambda)\Big)^{-1}vR^{\pm}_{0}(\lambda)
\end{equation}
where $M^{\pm}(\lambda)=U+vR^{\pm}_{0}(\lambda)v$, $v(x)=|V(x)|^{1/2}$ and
\begin{equation}
	U(x)=\begin{cases}
		1,\,\, &\,\, V(x)\geq0;\\
		-1,\,\, &\,\, V(x)<0.
	\end{cases}
\end{equation}

\subsection{Asymptotic expansion of $R_{0}^{\pm}(\lambda)$ near $\lambda=0$}

Recall the kernel of $R_{0}^{\pm}(\lambda)$:
\begin{equation*}
   R_{0}^{\pm}(\lambda; x, y)=\frac{1}{1+2\eta^2}\Bigg[\frac{e^{\pm i\eta|x-y|}}{4\pi|x-y|}-\frac{e^{-\sqrt{1+\eta^2}|x-y|}}{4\pi|x-y|}\Bigg],
\end{equation*}
where $\eta=\big(\sqrt{1/4+\lambda}-1/2\big)^{1/2}$. Expand each term into Taylor series at $\eta=0$, we have:
\begin{lemma}\label{freeresolventexpansion}
For $\lambda>0$ and $\eta=\big(\sqrt{1/4+\lambda}-1/2\big)^{1/2}$, we have the following formally expansions of  $R_{0}^{\pm}(\lambda)$ as $\lambda\downarrow 0$:
\begin{equation}\label{lap odd}
   R_{0}^{\pm}(\lambda)=G_{0}\pm i\eta G_{1}+\eta^2 G_{2}\pm i\eta^3 G_{3}+\eta^4 G_{4}+O_{1}(\eta^5|x-y|^4),
\end{equation}
where $G_{j}, j=0, 1, 2, 3, 4$ are operators given by the following kernels
\begin{equation*}
  \begin{split}
  & G_{0}(x, y)=\frac{1}{4\pi|x-y|}-\frac{e^{-|x-y|}}{4\pi|x-y|};\,\, G_{1}(x, y)=\frac{1}{4\pi};\,\, G_{3}(x, y)=\frac{-1}{2\pi}+\frac{-|x-y|^2}{24\pi};\\
  & G_{2}(x, y)=\Bigg(\frac{-|x-y|}{8\pi}+\frac{e^{-|x-y|}}{8\pi}\Bigg)-\Bigg(\frac{1}{2\pi|x-y|}-\frac{e^{-|x-y|}}{2\pi|x-y|}\Bigg);\\
  & G_{4}(x, y)=\frac{1-e^{-|x-y|}}{\pi|x-y|}+\frac{|x-y|-e^{-|x-y|}}{4\pi}+\Bigg(\frac{|x-y|^3}{96\pi}-\frac{1+|x-y|}{32\pi}e^{-|x-y|}\Bigg).
  \end{split}
\end{equation*}

Note that $G_{0}(x, y)$ is actually the kernel of free resolvent $R_{0}(0):=(-\Delta)^{-1}-(-\Delta+1)^{-1}$. Furthermore, $G_{0}\in B(s, -s')$ with $s, s'>1/2$ and $s+s'>2$. For $j=1, 2, 3, 4$, $G_{j}\in B(s, -s')$ with $s, s'>j+1/2$. The operator with kernel $|x-y|^4$ belongs to $B(s, -s')$ with $s, s'>11/2$.
\end{lemma}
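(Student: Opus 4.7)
The plan is to Taylor-expand each of the three factors in
$$R_0^\pm(\lambda;x,y) = \frac{1}{1+2\eta^2}\left[\frac{e^{\pm i\eta|x-y|}}{4\pi|x-y|}-\frac{e^{-\sqrt{1+\eta^2}|x-y|}}{4\pi|x-y|}\right]$$
about $\eta=0$ and collect by powers of $\eta$. Writing $r=|x-y|$, I would use $(1+2\eta^2)^{-1}=1-2\eta^2+4\eta^4+O(\eta^6)$, $e^{\pm i\eta r}=\sum_{k=0}^{4}(\pm i\eta r)^k/k!+O(\eta^5 r^5)$, and, starting from $\sqrt{1+\eta^2}=1+\tfrac{\eta^2}{2}-\tfrac{\eta^4}{8}+O(\eta^6)$, the resulting expansion
$$e^{-\sqrt{1+\eta^2}\,r}=e^{-r}\Bigl[1-\tfrac{\eta^2 r}{2}+\tfrac{\eta^4 r}{8}+\tfrac{\eta^4 r^2}{8}+O(\eta^6)\Bigr].$$
Substituting and dividing by $4\pi r$, the $1/r$ contributions from the oscillating and decaying exponentials cancel at every even order; at $\eta^0$ this produces precisely $G_0(x,y)=(1-e^{-r})/(4\pi r)$, which is the kernel of $(-\Delta)^{-1}-(-\Delta+1)^{-1}$. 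Collecting the contributions of orders $\eta^1,\eta^2,\eta^3,\eta^4$ from the three multiplied series then reproduces the stated $G_1,G_2,G_3,G_4$ term by term.

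The remainder estimate is the main technical step. The dominant error comes from the $O(\eta^5 r^5)$ Taylor remainder of $e^{\pm i\eta r}$, which after dividing by $r$ and multiplying by $(1+2\eta^2)^{-1}$ is bounded by $\eta^5 r^4$; the remainders produced by the $\sqrt{1+\eta^2}$ expansion and by $(1+2\eta^2)^{-1}$ each carry an extra factor of $e^{-r}$ or are of order $\eta^6$, and so are absorbed into the leading bound. To upgrade this pointwise estimate to the $O_1$ bound, I would use the integral form of the Taylor remainder and differentiate once in $\eta$: this loses one power of $\eta$ and picks up one power of $r$, producing an $O(\eta^4 r^4)$ bound for $\partial_\eta$ of the remainder. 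Since $\lambda=\eta^4+\eta^2$ gives $\lambda\,\partial_\lambda\sim \eta\,\partial_\eta$ near the threshold, this is exactly the $O_1(\eta^5|x-y|^4)$ requirement of the lemma.

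For the weighted $L^2$ mapping properties I would proceed as follows. The kernel $G_0$ decomposes as the Newtonian kernel $1/(4\pi r)$ minus the Bessel-type kernel $e^{-r}/(4\pi r)$; the latter is bounded on every $L^2_s$, while the former maps $L^2_s\to L^2_{-s'}$ precisely when $s,s'>1/2$ and $s+s'>2$, as is classical (weighted Hardy--Littlewood--Sobolev, i.e.\ Stein--Weiss). For $j\geq 1$, each $G_j$ splits as a polynomial piece of degree $j-1$ in $r$ plus exponentially decaying pieces involving $e^{-r}$. The exponential pieces are Hilbert--Schmidt on all $L^2_s$ spaces, and for the polynomial part I would use $r^{j-1}\lesssim (1+|x|)^{j-1}+(1+|y|)^{j-1}$ together with Cauchy--Schwarz against the weights $(1+|x|)^{-s'}$ and $(1+|y|)^{-s}$, which yields boundedness from $L^2_s$ to $L^2_{-s'}$ whenever $s,s'>j+1/2$. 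The same estimate with the exponent raised by one gives the final claim for the kernel $|x-y|^4$, at the threshold $s,s'>11/2$.
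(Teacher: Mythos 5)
Your proposal is correct and takes essentially the same route as the paper: the expansion is the direct Taylor computation about $\eta=0$ (your coefficients reproduce $G_0,\dots,G_4$ exactly), and the weighted bounds follow from the classical estimate for $(-\Delta)^{-1}$ plus the Hilbert--Schmidt criterion $\iint(1+|x|)^{-2s'}|x-y|^{2j}(1+|y|)^{-2s}\,dx\,dy<\infty$ for the polynomial pieces, which is precisely the content of the paper's (very short) proof. Two minor caveats: the kernels $G_j$ with $j\ge 1$ also contain multiples of $(1-e^{-|x-y|})/|x-y|$, not just ``polynomial plus exponential'' pieces, though these are even better behaved and are covered by the $G_0$ analysis; and the order-$\eta^5$ cross terms such as $4\eta^4\cdot\bigl(\pm i\eta/(4\pi)\bigr)$ are \emph{not} dominated by $\eta^5|x-y|^4$ near the diagonal, so the remainder is really $O_1\bigl(\eta^5(1+|x-y|)^4\bigr)$ --- an imprecision already present in the paper's own statement (and in its later bound $|G_4(x,y)|\lesssim|x-y|^3$) and harmless in all subsequent uses, but your claim that these terms are ``absorbed into the leading bound'' holds only for $|x-y|\gtrsim 1$.
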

\begin{proof}
	Note that $|x-y|^j$ with $j=0, 1, 2, 3,4$, one obtain
	\begin{equation*}
		\int_{\mathbf{R}^3}\int_{\mathbf{R}^3}\Big(1+|x|\Big)^{-2s'}\big|x-y\big|^{2j}\Big(1+|y|\Big)^{-2s}dxdy<\infty
	\end{equation*}
	for $s, s'>j+3/2$. Since $\Big(4\pi|x-y|\Big)^{-1}$ is the kernel of $(-\Delta)^{-1}$,  we know $(-\Delta)^{-1}\in B(s, -s')$ and $(-\Delta)^{-1}$ is compact in $B(s, -s')$ with $s+s'>2$ by \cite[Lemma 2.3]{J}.
\end{proof}

\subsection{Asymptotic expansion of $R_{V}^{\pm}(\lambda)$ near $\lambda=0$}
In order to obtain the asymptotic expansions of $R_{V}^{\pm}(\lambda)$ near zero threshold, we need to derive the expansions for $\big(M^{\pm}(\lambda)\big)^{-1}$ by identity \eqref{symmetriresolventidentity}. The behavior of $\big(M^{\pm}(\lambda)\big)^{-1}$ as $\lambda\rightarrow0$ depends on the type of resonances at zero energy, see Definition \ref{resonace} below. We get these expansions case by case and establish their contribution to the spectral measure in Stone's formula. From the expansions of free resolvent $R_{0}^{\pm}(\lambda)$ in the weighted spaces $B(s, -s')$, see Lemma \ref{freeresolventexpansion}, we have the following expansions for $M^{\pm}(\lambda)$.

\begin{lemma}
	Let $P=v\langle\cdot, v\rangle\|V\|_{L^{1}(\mathbf{R}^3)}^{-1}$ denotes the orthogonal projection onto the span of $v$. Assume that $v(x)\lesssim (1+|x|)^{-\beta/2}$ with some $\beta>9$. Then for $0<\lambda<1$ in $B(0, 0)$, we have
	\begin{equation}
		M^{\pm}(\lambda)=T_{0}+i\frac{\|V\|_{L^1}}{4\pi}\eta P+\eta^2 vG_{2}v+i\eta^3vG_{3}v+O_{1}\Big(\eta^4v(x)|x-y|^3v(y)\Big)
	\end{equation}
	where $T_{0}=U+vG_{0}v$.
	\end{lemma}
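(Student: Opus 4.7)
The plan is to simply substitute the free resolvent expansion from Lemma \ref{freeresolventexpansion} into the symmetric identity $M^{\pm}(\lambda)=U+vR_{0}^{\pm}(\lambda)v$ and then identify the resulting terms. Using the expansion
\begin{equation*}
R_{0}^{\pm}(\lambda)=G_{0}\pm i\eta G_{1}+\eta^2 G_{2}\pm i\eta^3 G_{3}+\eta^4 G_{4}+O_{1}\bigl(\eta^5|x-y|^4\bigr),
\end{equation*}
sandwiching with $v$ on both sides and setting $T_{0}=U+vG_{0}v$ yields
\begin{equation*}
M^{\pm}(\lambda)=T_{0}\pm i\eta\, vG_{1}v+\eta^2 vG_{2}v\pm i\eta^3 vG_{3}v+\eta^4 vG_{4}v+O_{1}\bigl(\eta^5 v(x)|x-y|^4 v(y)\bigr).
\end{equation*}

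The next step is to recognize the coefficient of $\pm i\eta$ as a rank-one projection. Since Lemma \ref{freeresolventexpansion} gives $G_{1}(x,y)\equiv\frac{1}{4\pi}$, the operator $vG_{1}v$ has kernel $\frac{v(x)v(y)}{4\pi}$, so
\begin{equation*}
vG_{1}v=\frac{1}{4\pi}\,v\langle\cdot,v\rangle=\frac{\|V\|_{L^1}}{4\pi}\,P,
\end{equation*}
by the definition of $P$. This produces the announced $\pm i\frac{\|V\|_{L^1}}{4\pi}\eta P$ term (the statement records the $M^{+}$ sign; the $M^{-}$ expansion is obtained by flipping the signs of odd powers of $\eta$).

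It remains to fold the $\eta^4 vG_{4}v$ and the tail $O_{1}\bigl(\eta^5 v(x)|x-y|^4 v(y)\bigr)$ into a single remainder controlled by $\eta^4 v(x)|x-y|^3 v(y)$. From the explicit form of $G_{4}(x,y)$ one reads off the pointwise bound $|G_{4}(x,y)|\lesssim 1+|x-y|^3$, and by direct differentiation $|\partial_{\eta}\!\bigl(\eta^4 G_{4}\bigr)|\lesssim \eta^{3}(1+|x-y|^3)$, so this term has the claimed $O_{1}$ shape. For the tail, we use that $\lambda<1$ forces $\eta$ to stay in a bounded range, so $\eta^5|x-y|^4=\eta\cdot\eta^4|x-y|\cdot|x-y|^3$ is absorbed at the cost of one additional factor of $(1+|x-y|)$, which the surrounding $v$'s can accommodate.

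The main technical obstacle is therefore the $B(0,0)$ boundedness of the remainder, which forces the decay hypothesis $\beta>9$. For a kernel of the form $v(x)|x-y|^{3}v(y)$ with $v(x)\lesssim(1+|x|)^{-\beta/2}$, Schur or Hilbert--Schmidt testing gives boundedness on $L^2(\mathbf{R}^3)$ provided
\begin{equation*}
\int\!\!\int (1+|x|)^{-\beta}(1+|y|)^{-\beta}|x-y|^{6}\,dx\,dy<\infty,
\end{equation*}
and using $|x-y|^6\lesssim(1+|x|)^6+(1+|y|)^6$ together with the dimension $d=3$ this requires exactly $\beta>9$. The same estimate (with one additional polynomial factor) controls $\partial_{\eta}$ of the remainder, completing the verification that the error term is $O_{1}$ in $B(0,0)$.
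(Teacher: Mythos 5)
Your proof follows the same route as the paper's: substitute the expansion of $R_0^{\pm}(\lambda)$ from Lemma \ref{freeresolventexpansion} into $M^{\pm}(\lambda)=U+vR_0^{\pm}(\lambda)v$, identify $vG_1v=\tfrac{\|V\|_{L^1}}{4\pi}P$, bound $|G_4(x,y)|\lesssim 1+|x-y|^3$ from its explicit kernel, and check the $B(0,0)$ bound of $v(x)\langle x-y\rangle^3 v(y)$ by a Hilbert--Schmidt computation that pins down $\beta>9$. All of this is correct, and you are in fact more explicit than the paper, whose proof records only the bound on $G_4$ and is silent about the Taylor tail.

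The one step that does not close as written is your absorption of the tail $O_1\bigl(\eta^5 v(x)|x-y|^4v(y)\bigr)$. Trading the extra factor $|x-y|\le(1+|x|)(1+|y|)$ into the potentials replaces $v$ by $(1+|\cdot|)v\lesssim(1+|\cdot|)^{1-\beta/2}$ on each side, and the Hilbert--Schmidt test for $(1+|x|)^{1-\beta/2}|x-y|^3(1+|y|)^{1-\beta/2}$ in $\mathbf{R}^3$ then requires $\beta-2>9$, i.e.\ $\beta>11$; the hypothesis $\beta>9$ is already fully consumed by the kernel $v(x)|x-y|^3v(y)$ itself, so there is no slack to spend. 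The repair is not to pass through the $\eta^5|x-y|^4$ remainder at all: for the oscillatory factor one has $\bigl|e^{is}-\sum_{k=0}^{3}(is)^k/k!\bigr|\le|s|^4/24$ and the $s$-derivative of this remainder is bounded by $|s|^3/6$, with $s=\eta|x-y|$; dividing by $4\pi|x-y|$ (and treating the smooth factors $(1+2\eta^2)^{-1}$ and $e^{-\sqrt{1+\eta^2}|x-y|}$ similarly for $0<\eta<1$) shows that the entire error beyond the $i\eta^3G_3$ term is already pointwise $O_1\bigl(\eta^4(1+|x-y|^3)\bigr)$, with no extra power of $|x-y|$ to dispose of. A cosmetic remark, shared with the paper: since $G_4$ does not vanish on the diagonal, the remainder should be written with $1+|x-y|^3$ rather than $|x-y|^3$; this affects nothing in the $B(0,0)$ estimate.
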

	\begin{proof}
		From Lemma \ref{freeresolventexpansion}, we need only to show $\big|G_{4}(x, y)\big|\lesssim |x-y|^3$. Since
		\begin{equation*}
			\Bigg|\frac{1-e^{-|x-y|}}{\pi|x-y|}\Bigg|\lesssim \begin{cases}
				1,\,\,&\,\, |x-y|<1;\\ |x-y|^{-1},\,\, &\,\, |x-y|\geq1,
			\end{cases}
		\end{equation*}
		thus the lemma holds by the representation of $G_{4}(x, y)$.
	\end{proof}
In order to get the asymptotic expansions of $\Big(M^{\pm}(\lambda)\Big)^{-1}$ at $\lambda=0$, we deal with zero in three cases: regular(not resonance nor eigenvalue), resonance and eigenvalue.
\begin{definition}\label{resonace}
i) If $T_{0}=U+vG_{0}v$ is invertible on $L^{2}(\mathbf{R}^3)$, we say zero is a regular point of $H=\Delta^2-\Delta+V$. \quad ii) If $T_{0}$ is not invertible and $T_{1}=S_{1}PS_{1}$ is invertible on $S_{1}L^{2}(\mathbf{R}^3)$, we say zero is a resonance of $H$. Here $S_{1}$ is the Riesz projection onto $\ker(T_{0})$. \quad iii) If $T_{1}$ is not invertible and $T_{2}=S_{2}vG_{2}vS_{2}$ is invertible on $S_{2}L^{2}(\mathbf{R}^3)$, we say zero is an eigenvalue of $H$. Here $S_{2}$ is the Riesz projection onto $\ker(T_{1})$.
\end{definition}
\begin{remark}\label{propertyofS}
	i) Note that $S_{2}\leq S_{1}$ and $S_{1}$ is of finite rank. Since $vG_{0}v$ is a compact operator in $L^2(\mathbf{R}^3)$ by the proof of Lemma \ref{freeresolventexpansion}, thus $T_{0}$ is a compact perturbation of $U$. Hence, the Fredholm alternative theorem guarantees that $S_{1}$ is a finite-rank projection.\quad ii) $PS_{2}=S_{2}P=0$.\quad iii) If $0\neq S_{2}=S_{1}$, then zero is both eigenvalue and resonance of $H=\Delta^2-\Delta+V$. Otherwise,  zero is purely an eigenvalue of $H$ provided $0\neq S_{2}<S_{1}$.\quad iv) Denotes $D_{0}=\Big(T_{0}+S_{1}\Big)^{-1}$ and $D_{1}=\Big(T_{1}+S_{2}\Big)^{-1}$,  then $S_{1}D_{0}=D_{0}S_{1}=S_{1}$ and $S_{2}D_{1}=D_{1}S_{2}=S_{2}$.
\end{remark}

For better understanding zero threshold, we proceed to establish the relationship between the spectral subspaces $S_{1}L^2(\mathbf{R}^3),\,\, S_{2}L^2(\mathbf{R}^3)$ and distributional solutions to $H\psi=0$.  For any $s_{0}\in\mathbf{R}$, denotes $\mathcal{L}^{2}_{s_{0}}(\mathbf{R}^3)=\cap_{s<s_{0}}L^{2}_{s}(\mathbf{R}^3)$. Especially, $L^{2}(\mathbf{R}^3)\subsetneq\mathcal{L}^{2}_{0}(\mathbf{R}^3)$.

\begin{lemma}\label{regular}
	Assume $v(x)\lesssim (1+|x|)^{-s}$ with some $s>3/2$. If $\phi\in S_{1}L^{2}(\mathbf{R}^3)\setminus\{0\}$, then $\phi=Uv\psi$ where $\psi\in\mathcal{L}^{2}_{-1/2}(\mathbf{R}^3)$ satisfies $H\psi=0$ in the distributional sense, and
	\begin{equation*}
		\psi(x)=-G_{0}v\phi=-\int_{\mathbf{R}^3}\Bigg(\frac{1}{4\pi|x-y|}-\frac{e^{-|x-y|}}{4\pi|x-y|}\Bigg)v(y)\phi(y)dy.
	\end{equation*}
	Conversely, if $\psi\in\mathcal{L}^{2}_{-1/2}(\mathbf{R}^3)$ satisfies $H\psi=0$, then $\phi=Uv\psi\in S_{1}L^{2}(\mathbf{R}^3)$.
\end{lemma}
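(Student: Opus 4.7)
The key identity driving the proof is $H_0 G_0 = G_0 H_0 = I$, which comes from the factorization $H_0 = (-\Delta)(-\Delta+1)$ together with $G_0 = (-\Delta)^{-1} - (-\Delta+1)^{-1}$ recorded in Lemma \ref{freeresolventexpansion}. Given $\phi \in S_1 L^2(\mathbf{R}^3)\setminus\{0\}$, I would start from $T_0\phi = 0$ in the form $U\phi = -vG_0 v\phi$, set $\psi := -G_0 v\phi$, and multiply by $U$ to obtain $\phi = Uv\psi$, equivalently $v\psi = U\phi$. To place $\psi$ in $\mathcal{L}^{2}_{-1/2}(\mathbf{R}^3)$, I would invoke the mapping bound $G_0 \in B(s,-s')$ for $s,s'>1/2$, $s+s'>2$: since $v \lesssim (1+|x|)^{-s_0}$ with $s_0>3/2$ and $\phi \in L^2$, one has $v\phi \in L^{2}_{s}$ for every $s<s_0$, so $G_0 v\phi \in L^{2}_{-1/2-\delta}$ for every $\delta>0$. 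To verify $H\psi = 0$ distributionally, compute $H_0\psi = -H_0 G_0 v\phi = -v\phi$ and $V\psi = Uv^2\psi = Uv(v\psi) = Uv(U\phi) = v\phi$, so $(H_0+V)\psi=0$.

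\textbf{Converse.} Given $\psi \in \mathcal{L}^{2}_{-1/2}(\mathbf{R}^3)$ with $H\psi = 0$, set $\phi := Uv\psi$; the decay of $v$ ensures $\phi \in L^2$. Using $V=Uv^2=vUv$, I would compute
\begin{equation*}
T_0\phi = U\phi + vG_0 v\phi = v\psi + vG_0 V\psi,
\end{equation*}
so the task reduces to showing $\psi = -G_0 V\psi$. From $H\psi=0$ we have $H_0\psi=-V\psi$ in $\mathcal{S}'(\mathbf{R}^3)$. Let $w := -G_0 V\psi$; the mapping bound on $G_0$ together with the decay of $V$ puts $w \in \mathcal{L}^{2}_{-1/2}$, and $H_0 w = -V\psi$. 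Hence $u := \psi - w \in \mathcal{L}^{2}_{-1/2}$ satisfies $H_0 u = 0$ in $\mathcal{S}'$. Taking Fourier transform gives $|\xi|^2(|\xi|^2+1)\hat{u} = 0$, so $\operatorname{supp}\hat{u} \subseteq \{0\}$, and $u$ must be a polynomial. A short check shows the only polynomial in $\mathcal{L}^{2}_{-1/2}(\mathbf{R}^3)$ is $0$: even a nonzero constant $c$ violates $\int_{\mathbf{R}^3} c^2 (1+|x|)^{-1-2\delta}\,dx < \infty$ for all $\delta < 1$, and monomials grow worse. Therefore $\psi = w$ and $T_0\phi = 0$, i.e.\ $\phi \in S_1 L^2$.

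\textbf{Main obstacle.} The genuine technical point is the uniqueness step in the converse: one must check that $w = -G_0 V\psi$ indeed lies in $\mathcal{L}^{2}_{-1/2}$ by quantitatively applying the weighted bounds on $G_0$ to $V\psi$ (a short weight-juggling that leans on $V$ decaying faster than $|x|^{-3}$), so that $\psi - w$ is a genuine tempered distribution to which the Fourier-support argument applies, and then combine this with the triviality of polynomials in $\mathcal{L}^{2}_{-1/2}(\mathbf{R}^3)$. Everything else is algebra and distributional manipulation that follows directly from $H_0 G_0 = I$ and the structural information already contained in Lemma \ref{freeresolventexpansion}.
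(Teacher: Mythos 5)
Your proposal is correct and follows essentially the same route as the paper's proof: in the forward direction you read $T_{0}\phi=0$ as $U\phi=-vG_{0}v\phi$, set $\psi=-G_{0}v\phi$, use the weighted bound on $G_{0}$ (equivalently the paper's comparison with $(-\Delta)^{-1}$) to place $\psi$ in $\mathcal{L}^{2}_{-1/2}$, and verify $H\psi=0$ from $H_{0}G_{0}=I$; in the converse you reverse the algebra using $v\phi=V\psi$. The one point where you go beyond the paper is the uniqueness step in the converse --- showing that $u\in\mathcal{L}^{2}_{-1/2}$ with $H_{0}u=0$ forces $u=0$ via the Fourier-support/polynomial argument --- which the paper passes over silently when it writes $\psi=-(\Delta^{2}-\Delta)^{-1}v\phi$; your explicit treatment of this is a correct and worthwhile addition rather than a departure in method.
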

\begin{proof}
	For $\phi\in S_{1}L^2$, then $(U+vG_{0}v)\phi=0$ which implies $\phi=Uv(-G_{0}v\phi)=Uv\psi$. Since $\psi=-G_{0}v\phi$, then
	\begin{equation*}
		|\psi|\lesssim \int_{\mathbf{R}^3}\frac{1}{4\pi|x-y|}\big|v(y)\phi(y)\big|dy=(-\Delta)^{-1}\Big(|v\phi|\Big).
	\end{equation*}
	Since $(-\Delta)^{-1}\in B(s, -s')$ for $s, s'>1/2$ and $s+s'>2$ by \cite[Lemma 2.3]{J}, then  $\psi\in\mathcal{L}^{2}_{-1/2}(\mathbf{R}^3)$.
	 Recall that $G_{0}=(-\Delta)^{-1}-(-\Delta+1)^{-1}$. For any $\varphi\in C_{0}^{\infty}(\mathbf{R}^3)$, then
	\begin{equation*}
		\langle H\psi, \varphi\rangle=\langle -\phi, vG_{0}H\varphi \rangle=\langle -\phi, v\varphi+vG_{0}V\varphi \rangle=\langle-v\phi+vUv\psi, \varphi\rangle=0.
	\end{equation*}
	
	If $\psi\in\mathcal{L}^{2}_{-1/2}(\mathbf{R}^3)$ satisfies $H\psi=0$, we show $(U+vG_{0}v)\phi=0$. Since $0=H\psi=(\Delta^2-\Delta)\psi+vUv\psi=(\Delta^2-\Delta)\psi+v\phi$, thus $\psi=-(\Delta^2-\Delta)^{-1}v\phi$. Hence $U\phi+vG_{0}v\phi=UUv\psi-v\psi=0.$
\end{proof}

\begin{lemma}\label{S_2}
	Assume $v(x)\lesssim(1+|x|)^{-s}$ with some $s>5/2$. If $\phi\in S_{2}L^{2}(\mathbf{R}^3)\setminus\{0\}$, then $\phi=Uv\psi$ where $\psi\in L^{2}(\mathbf{R}^3)$ satisfies $H\psi=0$ in the distributional sense. Conversely, if $\psi\in L^{2}(\mathbf{R}^3)$ satisfies $H\psi=0$, then $\phi=Uv\psi\in S_{2}L^{2}(\mathbf{R}^3)$.
\end{lemma}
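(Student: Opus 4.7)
Plan: The argument parallels the proof of Lemma \ref{regular}, replacing the soft weighted bound $\psi\in\mathcal{L}^{2}_{-1/2}$ by the sharper conclusion $\psi\in L^{2}$. The extra input is the vanishing moment $\langle\phi,v\rangle=0$ that holds for every $\phi\in S_{2}L^{2}$, which is exactly the content of $S_{2}v=0$ recorded in Remark \ref{propertyofS}(ii).

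Forward direction: given $\phi\in S_{2}L^{2}\setminus\{0\}$, use $S_{2}\leq S_{1}$ to invoke Lemma \ref{regular}, obtaining $\phi=Uv\psi$ with $\psi=-G_{0}v\phi\in\mathcal{L}^{2}_{-1/2}$ and $H\psi=0$ in the distributional sense. It remains to upgrade $\psi$ from $\mathcal{L}^{2}_{-1/2}$ to $L^{2}$. Split $G_{0}=(-\Delta)^{-1}-(-\Delta+1)^{-1}$; the contribution $(-\Delta+1)^{-1}(v\phi)$ lies in $H^{2}\subset L^{2}$ because $v$ is bounded and $\phi\in L^{2}$. For the Newton potential contribution, for $|x|$ large decompose
\begin{equation*}
\big[(-\Delta)^{-1}(v\phi)\big](x)=\frac{1}{4\pi|x|}\int_{\mathbf{R}^{3}}v\phi\,dy+\int_{\mathbf{R}^{3}}\Bigg(\frac{1}{4\pi|x-y|}-\frac{1}{4\pi|x|}\Bigg)v(y)\phi(y)\,dy.
\end{equation*}
The first term vanishes because $\int v\phi\,dy=\langle\phi,v\rangle=0$. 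For the second, the bound $\big||x-y|^{-1}-|x|^{-1}\big|\lesssim|y|/|x|^{2}$ (valid for $|y|\leq|x|/2$), combined with $\int|y|v(y)|\phi(y)|\,dy\leq\big\||y|v\big\|_{L^{2}}\|\phi\|_{L^{2}}<\infty$, yields $O(|x|^{-2})$ decay; the contribution from $|y|\geq|x|/2$ is controlled crudely by the same weighted Cauchy--Schwarz. The finiteness of $\big\||y|v\big\|_{L^{2}}$ is precisely where the hypothesis $s>5/2$ enters. Since $|x|^{-2}$ is square-integrable at infinity in $\mathbf{R}^{3}$, and $\psi$ is bounded near the origin (the apparent singularity of $G_{0}(x,y)$ at $x=y$ is removable), one concludes $\psi\in L^{2}(\mathbf{R}^{3})$.

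Converse direction: given $\psi\in L^{2}$ with $H\psi=0$, set $\phi=Uv\psi\in L^{2}$. Since $L^{2}\subset\mathcal{L}^{2}_{-1/2}$, Lemma \ref{regular} yields $\phi\in S_{1}L^{2}$; it remains to show $T_{1}\phi=0$. On $S_{1}L^{2}$ we have $T_{1}\phi=\|V\|_{L^{1}}^{-1}\langle\phi,v\rangle\,S_{1}v$, so it suffices to verify $\langle\phi,v\rangle=\int V\psi\,dx=0$. Choose a cutoff $\chi_{R}\in C_{0}^{\infty}(\mathbf{R}^{3})$ with $\chi_{R}\equiv1$ on $B_{R}$, $\mathrm{supp}\,\chi_{R}\subset B_{2R}$, and $\|D^{\alpha}\chi_{R}\|_{\infty}\lesssim R^{-|\alpha|}$. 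Pairing $H\psi=0$ against $\chi_{R}$ in the distributional sense gives $\int V\psi\,\chi_{R}\,dx=-\int\psi\,(\Delta^{2}-\Delta)\chi_{R}\,dx$. Since $(\Delta^{2}-\Delta)\chi_{R}$ is supported in the annulus $R\leq|x|\leq 2R$ of volume $\sim R^{3}$, a direct count gives $\|(\Delta^{2}-\Delta)\chi_{R}\|_{L^{2}}\lesssim R^{-1/2}$. Cauchy--Schwarz then yields $\big|\int\psi(\Delta^{2}-\Delta)\chi_{R}\,dx\big|\lesssim R^{-1/2}\|\psi\|_{L^{2}(|x|\geq R)}\to 0$, while the left-hand side converges to $\int V\psi\,dx$ by dominated convergence (note $V\psi\in L^{1}$ by Cauchy--Schwarz since $V\in L^{2}$ under the standing decay). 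Hence $\langle\phi,v\rangle=0$ and $\phi\in S_{2}L^{2}$.

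The main obstacle is the quantitative decay upgrade in the forward direction: without the moment cancellation, $\psi$ decays only like $|x|^{-1}$, which fails to lie in $L^{2}(\mathbf{R}^{3})$. The identity $\langle\phi,v\rangle=0$ inherited from $S_{2}v=0$, together with the sharp decay rate $s>5/2$, conspire to produce exactly $O(|x|^{-2})$, which is square-integrable at infinity in three dimensions.
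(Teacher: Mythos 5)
Your proof is correct, and the forward direction is essentially the paper's argument: both exploit the moment condition $\int v\phi=0$ (which the paper extracts the same way, from $0=\langle S_{1}PS_{1}\phi,\phi\rangle=\|P\phi\|^{2}$) to subtract a multiple of a radial profile from the Newton potential and gain one extra power of decay. The paper subtracts $\tfrac{1}{4\pi(1+|x|)}$ and uses the identity $\tfrac{1}{|x-y|}-\tfrac{1}{1+|x|}=\tfrac{1+|x|-|x-y|}{(1+|x|)|x-y|}$ with $|1+|x|-|x-y||\le 1+|y|$, landing $\psi_{1}$ in $(1+|x|)^{-1}\cdot\mathcal{L}^{2}_{-1/2}\subset L^{2}$; you subtract $\tfrac{1}{4\pi|x|}$ and run a pointwise $|y|/|x|^{2}$ estimate, which requires the same weighted integrability $\||y|v\|_{L^{2}}<\infty$, i.e.\ $s>5/2$. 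Your treatment of the region $|y|\ge|x|/2$ is stated only in one clause, but the weighted Cauchy--Schwarz you invoke does close it (one still gets $O(|x|^{-2})$ there), so this is a matter of terseness rather than a gap. Where you genuinely diverge is the converse: the paper again decomposes $\psi_{1}$ and argues that $\psi_{1}\in L^{2}$ forces the coefficient of the non-square-integrable profile $(1+|x|)^{-1}$ to vanish, whereas you pair $H\psi=0$ against a dilated cutoff and kill the boundary term via $\|(\Delta^{2}-\Delta)\chi_{R}\|_{L^{2}}\lesssim R^{-1/2}$ and $\|\psi\|_{L^{2}(|x|\ge R)}\to0$. Your route is more self-contained (it does not reuse the forward-direction decomposition and only needs $V\psi\in L^{1}$), while the paper's is shorter given the machinery already set up; both are valid.
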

\begin{proof}
	Since $\phi\in S_{2}L^{2}(\mathbf{R}^3)$, then $0=\langle S_{1}PS_{1}\phi, \phi\rangle=\langle P\phi, P\phi\rangle$. Thus $P\phi=0$. Since $S_{2}\leq S_{1}$, thus  $H\psi=0$ and
	\begin{equation*}
		\psi(x)=-\int_{\mathbf{R}^3}\frac{1}{4\pi|x-y|}v(y)\phi(y)dy+\int_{\mathbf{R}^3}\frac{e^{-|x-y|}}{4\pi|x-y|}v(y)\phi(y)dy:=\psi_{1}(x)+\psi_{2}(x).
	\end{equation*}
	Note that $\psi_{2}(x)=(-\Delta+1)^{-1}(v\phi)\in L^2(\mathbf{R}^3)$, since $(-\Delta+1)^{-1}\in B(0, 0)$ and $v\phi\in L^2(\mathbf{R}^3)$. Since $P\phi=0$, i.e. $\int_{\mathbf{R}^3} v(y)\phi(y)dy=0$, then
	\begin{equation*}
		\begin{split}
			\psi_{1}(x)= &\,\, \int_{\mathbf{R}^3} \frac{-1}{4\pi|x-y|}v(y)\phi(y)dy+\int_{\mathbf{R}^3} \frac{1}{4\pi(1+|x|)}v(y)\phi(y)dy\\
			= &\,\, \frac{-1}{4\pi}\int_{\mathbf{R}^3} \frac{1+|x|-|x-y|}{(1+|x|)|x-y|}v(y)\phi(y)dy\\
			\leq &\,\,\frac{1}{1+|x|}\int_{\mathbf{R}^3} \frac{1+|y|}{4\pi|x-y|}\big|v(y)\phi(y)\big|dy\\
			= &\,\, (1+|x|)^{-1}(-\Delta)^{-1}\Big((1+|\cdot|)v\phi\Big).
		\end{split}
	\end{equation*}
	Since $(1+|\cdot|)v\phi\in L^{2}_{s+1}(\mathbf{R}^3)$, then $(-\Delta)^{-1}\Big((1+|\cdot|)v\phi\Big)\in\mathcal{L}^{2}_{-1/2}(\mathbf{R}^3)$ by the boundedness of $(-\Delta)^{-1}$ in $B(s, -s')$, see \cite[Lemma 2.3]{J}. Thus $\psi_{1}\in L^{2}_{1/2-\epsilon}(\mathbf{R}^3)\subset L^{2}(\mathbf{R}^3)$ for any tiny $\epsilon>0$.
	
	If $\psi\in L^{2}(\mathbf{R}^3)$ satisfies $H\psi=0$, we show $P\phi=0$. Since $\psi\in L^{2}(\mathbf{R}^3)\subset \mathcal{L}^{2}_{-1/2}(\mathbf{R}^3)$, thus we have $L^{2}(\mathbf{R}^3)\ni\psi=\psi_{1}+\psi_{2}$. Since $\psi_{2}=(-\Delta+1)^{-1}\Big(v\phi\Big)\in L^2(\mathbf{R}^3)$, thus $\psi_{1}\in L^2(\mathbf{R}^3)$. However,
	\begin{align*}
		\psi_{1}(x)=\frac{-1}{4\pi}\int_{\mathbf{R}^3} \frac{1+|x|-|x-y|}{(1+|x|)|x-y|}v(y)\phi(y)dy-\int_{\mathbf{R}^3} \frac{1}{4\pi(1+|x|)}v(y)\phi(y)dy
	\end{align*}
	and
	\begin{equation*}
	\Big|\frac{-1}{4\pi}\int_{\mathbf{R}^3} \frac{1+|x|-|x-y|}{(1+|x|)|x-y|}v(y)\phi(y)dy\Big|\leq(1+|x|)^{-1}(-\Delta)^{-1}\Big((1+|\cdot|)v\phi\Big)\in L^{2}(\mathbf{R}^3)
	\end{equation*}
	which implies  $\frac{1}{4\pi(1+|x|)}\int_{\mathbf{R}^3} v(y)\phi(y)dy\in L^{2}_{x}(\mathbf{R}^3)$. Hence $\int_{\mathbf{R}^3} v(y)\phi(y)dy=0$.
\end{proof}

\begin{lemma}\label{stop}
	Assume $v(x)\lesssim (1+|x|)^{-s}$ with some $s>7/2$, then $\ker(S_{2}vG_{2}vS_{2})=\{0\}$.
\end{lemma}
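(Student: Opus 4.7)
The plan is to reduce the claim to showing $\langle G_{2} v\phi, v\phi\rangle = \|\psi\|_{L^{2}}^{2}$ for every $\phi \in S_{2}L^{2}(\mathbf{R}^{3})$, where $\psi \in L^{2}(\mathbf{R}^{3})$ is the zero eigenfunction associated to $\phi$ via Lemma~\ref{S_2}. Given this identity the conclusion is immediate: if $\phi \in S_{2}L^{2}$ satisfies $S_{2} v G_{2} v S_{2}\phi = 0$, then using self-adjointness of $S_{2}$ together with $S_{2}\phi = \phi$,
\[
0 = \langle S_{2} v G_{2} v S_{2}\phi, \phi\rangle = \langle G_{2} v\phi, v\phi\rangle = \|\psi\|^{2},
\]
which forces $\psi = 0$ and hence $\phi = Uv\psi = 0$.

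The key structural input is that $G_{2}$ is precisely the convolution kernel of $H_{0}^{-2}$. Indeed, writing $A = -\Delta$, one has $H_{0} = A(A+1)$, so partial fractions give $H_{0}^{-1} = A^{-1} - (A+1)^{-1}$ and consequently
\[
H_{0}^{-2} = A^{-2} + (A+1)^{-2} - 2H_{0}^{-1}.
\]
In three dimensions the three summands have kernels $-|x-y|/(8\pi)$, $e^{-|x-y|}/(8\pi)$, and $G_{0}(x,y) = (1-e^{-|x-y|})/(4\pi|x-y|)$ respectively, and their sum exactly reproduces the formula for $G_{2}$ in Lemma~\ref{freeresolventexpansion}. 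Equivalently, the Fourier symbol of $H_{0}^{-2}$ is $1/(|\xi|^{4}(1+|\xi|^{2})^{2})$.

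By Lemma~\ref{S_2}, any $\phi \in S_{2}L^{2}$ can be written as $\phi = Uv\psi$ with $\psi \in L^{2}(\mathbf{R}^{3})$ solving $H\psi = 0$. Using $vUv = V$ I get $v\phi = V\psi = -H_{0}\psi$ in $L^{2}$, and elliptic regularity (from $H_{0}\psi = -V\psi \in L^{2}$) upgrades $\psi$ to $H^{4}(\mathbf{R}^{3})$. On the Fourier side,
\[
\widehat{v\phi}(\xi) = -|\xi|^{2}(1+|\xi|^{2})\widehat\psi(\xi),
\]
so Plancherel directly yields
\[
\langle G_{2} v\phi, v\phi\rangle = \int_{\mathbf{R}^{3}}\frac{|\widehat{v\phi}(\xi)|^{2}}{|\xi|^{4}(1+|\xi|^{2})^{2}}\,d\xi = \int_{\mathbf{R}^{3}}|\widehat\psi(\xi)|^{2}\,d\xi = \|\psi\|_{L^{2}}^{2}.
\]

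The main obstacle is the kernel identification $G_{2} = H_{0}^{-2}$, which can be verified either by matching the explicit expressions above (using the known 3D Green's functions of $-\Delta$ and $1-\Delta$) or by checking the distributional identity $(\Delta_{y}^{2}-\Delta_{y})G_{2}(x,y) = G_{0}(x,y)$. Once this is in hand, the Plancherel computation is automatic since $\psi \in H^{4}$ and $v\phi \in L^{2}$ live in standard unweighted spaces; in particular the linear growth of the $-|x-y|/(8\pi)$ piece of $G_{2}$ causes no difficulty because the Fourier-side integrand manifestly collapses to $|\widehat\psi(\xi)|^{2}$ and integrates to $\|\psi\|^{2}$.
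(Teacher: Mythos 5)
Your proof is correct, and while it rests on the same central identity as the paper's argument --- namely that $\langle G_{2}v\phi,v\phi\rangle$ equals the manifestly nonnegative Fourier integral $\int_{\mathbf{R}^3}|\widehat{v\phi}(\xi)|^{2}(\xi^{4}+\xi^{2})^{-2}\,d\xi$ --- you arrive at it by a different and somewhat cleaner route. The paper realizes $G_{2}$ as the limit $\lim_{\zeta\to0}\zeta^{-2}\big(R_{0}(\zeta^{4}+\zeta^{2})-G_{0}\big)$ on the range of $S_{2}$ (using $P\phi=0$ to kill the $G_{1}$ term) and then passes to the Fourier side with dominated convergence; it concludes $v\phi=0$ from positivity and then $\phi=Uv(-G_{0}v\phi)=0$. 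You instead identify $G_{2}$ outright as the kernel of $H_{0}^{-2}$ via partial fractions and the explicit 3D Green's functions (the computation checks out against the formula for $G_{2}$ in Lemma~\ref{freeresolventexpansion}), and then use the eigenfunction relation $v\phi=V\psi=-H_{0}\psi$ with $\psi\in L^{2}\cap H^{4}$ from Lemma~\ref{S_2} to collapse the quadratic form to $\|\psi\|_{L^{2}}^{2}$. This buys you two things: the integrability of the Fourier integrand near $\xi=0$ is automatic (it is literally $|\widehat\psi|^{2}$), whereas the paper's dominated-convergence step implicitly relies on $\widehat{v\phi}$ vanishing at the origin; and the conclusion $\psi=0\Rightarrow\phi=Uv\psi=0$ is immediate. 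The one point you should nail down is the justification of $\langle G_{2}f,f\rangle=\int|\hat f|^{2}(\xi^{4}+\xi^{2})^{-2}d\xi$ in the presence of the growing kernel piece $-|x-y|/(8\pi)$: the Fourier transform of $|x|$ in $\mathbf{R}^{3}$ is only defined as a homogeneous distribution modulo a renormalization supported at $\xi=0$, which is harmless here precisely because $\widehat{v\phi}=-(\xi^{4}+\xi^{2})\widehat\psi$ vanishes to second order at the origin; alternatively your suggested route $\langle G_{2}H_{0}\psi,H_{0}\psi\rangle=\langle H_{0}G_{2}H_{0}\psi,\psi\rangle=\langle G_{0}H_{0}\psi,\psi\rangle=\|\psi\|^{2}$ works once you justify the integration by parts using $\psi\in H^{4}$ and the decay of $v\phi$. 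Either way the argument closes.
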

\begin{proof}
	For $\phi\in\ker(S_{2}vG_{2}vS_{2})$, then $\phi\in S_{2}L^2(\mathbf{R}^3)$, thus $P\phi=0$ which implies $vG_{1}v\phi=0$ by Lemma \ref{S_2} and the definition of $P$.  Let $\zeta=\big(\sqrt{1/4+z}-1/2\big)^{1/2}$. Therefore, by \eqref{freeresolventidentity} we have
	\begin{equation*}
		\begin{split}
			0=&\,\,\big\langle S_{2}vG_{2}vS_{2}\phi,\,\,\phi\big\rangle=\big\langle G_{2}v\phi,\,\,v\phi\big\rangle\\
			=&\,\, \lim_{\zeta\rightarrow0}\Bigg\langle\frac{R_{0}(z)-G_{0}-iG_{1}\zeta}{\zeta^2}v\phi,\,\,v\phi\Bigg\rangle
			= \lim_{\zeta\rightarrow0}\Bigg\langle\frac{R_{0}(\zeta^4+\zeta^2)-G_{0}}{\zeta^2}v\phi,\,\,v\phi\Bigg\rangle \\
			=&\,\, \lim_{\zeta\rightarrow0}\frac{1}{\zeta^2}\Bigg\langle\Bigg(\frac{1}{\xi^4+\xi^2-(\zeta^4+\zeta^2)}-\frac{1}{\xi^4+\xi^2}\Bigg)\widehat{v\phi}(\xi),\,\,\widehat{v\phi}(\xi)\Bigg\rangle\\
			=&\,\, \int_{\mathbf{R}^3}\frac{|\widehat{v\phi}(\xi)|^2}{(\xi^4+\xi^2)^2}d\xi.
		\end{split}
	\end{equation*}
	Here we used the dominated convergence theorem as $\zeta\rightarrow0$ with ${\rm Re}~(\zeta^4+\zeta^2)<0$ (by choose $0<|z|<1$ with ${\rm Re}~(z)<0$) in the last identity. Hence we have $v\phi=0$ since $v\phi\in L^1$. Note that $\phi\in S_{2}L^2\subset S_{1}L^2$, then  $\phi=Uv(-G_{0}v\phi)=0$ by Lemma \ref{S_2}.	
\end{proof}

In the rest of this section, we aim to obtain suitable expansions for $\Big(M^{\pm}(\lambda)\Big)^{-1}$ as $\lambda\rightarrow0$ in the three cases:  zero is a regular point, zero is a resonance and zero is an eigenvalue. Recall that $\eta=(\sqrt{1/4+\lambda}-1/2)^{1/2}$. Thus $\lambda\rightarrow0$ equals $\eta\rightarrow0$.
\begin{theorem}\label{Mexpansions}
	i) If zero is a regular point of $H=\Delta^2-\Delta+V$ with $|V(x)|\lesssim(1+|x|)^{-\beta}$ for some $\beta>3$, then
	\begin{equation*}
		\Big(M^{\pm}(\lambda)\Big)^{-1}=T_{0}^{-1}\mp i\frac{\|V\|_{L^1}}{4\pi}T_{0}^{-1}PT_{0}^{-1}\eta+O_{1}(\eta^2)
	\end{equation*}
	in $B(0, 0)$ as $\eta\rightarrow0$.
	
	ii) If zero is purely a resonance of $H=\Delta^2-\Delta+V$ with $|V(x)|\lesssim(1+|x|)^{-\beta}$ for some $\beta>5$, then
	\begin{equation*}
	   \begin{split}
		\Big(M^{\pm}(\lambda)\Big)^{-1}=&\,\,\mp i\frac{4\pi}{\|V\|_{L^1}}S_{1}(S_{1}PS_{1})^{-1}S_{1}\eta^{-1}+\Big(D_{0}+\frac{16\pi^2}{\|V\|^{2}_{L^1}}S_{1}(S_{1}PS_{1})^{-1}S_{1}S_{1}vG_{2}vS_{1}(S_{1}PS_{1})^{-1}S_{1}\Big)\\
		&\,\,-\Big(D_{0}PS_{1}(S_{1}PS_{1})^{-1}S_{1}+S_{1}(S_{1}PS_{1})^{-1}S_{1}PD_{0}\Big)+O_{1}(\eta)	 	
	   \end{split}
	\end{equation*}
	in $B(0, 0)$ as $\eta\rightarrow0$.
	
	iii) If zero is purely an eigenvalue or zero is both resonance and eigenvalue of $H=\Delta^2-\Delta+V$ with $|V(x)|\lesssim(1+|x|)^{-\beta}$ for some $\beta>7$, then
	\begin{equation*}
		\Big(M^{\pm}(\lambda)\Big)^{-1}= \eta^{-2}S_{2}(S_{2}vG_{2}vS_{2})^{-1}S_{2}+\eta^{-1}A^{\pm}_{-1}+A^{\pm}_{0}+O_{1}(\eta)	 	
	\end{equation*}
in $B(0, 0)$ as $\eta\rightarrow0$. Here $A^{\pm}_{-1}$ and $A^{\pm}_{0}$ are Hilbert-Schmidt operators.
\end{theorem}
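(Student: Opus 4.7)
My approach would follow the Jensen--Nenciu inversion scheme of \cite{JN}, applied once to $M^{\pm}(\lambda)$ in the resonance case and iterated twice in the eigenvalue case. The abstract tool I would use is the Feshbach-type lemma: if $A$ is a closed operator on a Hilbert space and $S$ is a projection such that $A+S$ is invertible, then $A$ is invertible if and only if the Schur complement $B:=S-S(A+S)^{-1}S$ is invertible on $SL^{2}$, with
$$A^{-1}=(A+S)^{-1}+(A+S)^{-1}\,SB^{-1}S\,(A+S)^{-1}.$$
In all three cases I would write $M^{\pm}(\lambda)=T_{0}+E^{\pm}(\lambda)$ with
$$E^{\pm}(\lambda)=\pm i\tfrac{\|V\|_{L^{1}}}{4\pi}\eta P+\eta^{2}vG_{2}v+i\eta^{3}vG_{3}v+O_{1}(\eta^{4}).$$

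In case (i), $T_{0}$ is already invertible on $L^{2}$, and a single Neumann series in $T_{0}^{-1}E^{\pm}(\lambda)$ would give the claimed two-term expansion with remainder $O_{1}(\eta^{2})$; the decay $\beta>3$ is needed only to ensure $vG_{2}v\in B(0,0)$. In case (ii), $T_{0}$ is singular but $T_{1}=S_{1}PS_{1}$ is invertible on $S_{1}L^{2}$. Setting $D_{0}=(T_{0}+S_{1})^{-1}$, I would first expand $(M^{\pm}(\lambda)+S_{1})^{-1}=D_{0}-D_{0}E^{\pm}(\lambda)D_{0}+O_{1}(\eta^{2})$, and using $S_{1}D_{0}=D_{0}S_{1}=S_{1}$ from Remark \ref{propertyofS} the Schur complement would simplify to
$$m^{\pm}(\lambda)=\pm i\tfrac{\|V\|_{L^{1}}}{4\pi}\eta T_{1}+\eta^{2}S_{1}vG_{2}vS_{1}+O_{1}(\eta^{3}),$$
which I would invert by factoring out the leading $\eta T_{1}$ and running a second Neumann series, obtaining
$$m^{\pm}(\lambda)^{-1}=\mp i\tfrac{4\pi}{\|V\|_{L^{1}}\eta}(S_{1}PS_{1})^{-1}+\tfrac{16\pi^{2}}{\|V\|_{L^{1}}^{2}}(S_{1}PS_{1})^{-1}S_{1}vG_{2}vS_{1}(S_{1}PS_{1})^{-1}+O_{1}(\eta).$$
Substituting back into the Feshbach formula and collecting powers of $\eta$ would produce the stated expansion, whose constant term assembles $D_{0}$, the $\eta^{0}$-piece of $m^{\pm}(\lambda)^{-1}$, and two cross terms $-D_{0}PS_{1}(S_{1}PS_{1})^{-1}S_{1}$ and $-S_{1}(S_{1}PS_{1})^{-1}S_{1}PD_{0}$ arising from the first-order correction in $(M^{\pm}(\lambda)+S_{1})^{-1}$. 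The requirement $\beta>5$ secures boundedness of $vG_{3}v$ and the $O_{1}(\eta)$ remainder.

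In case (iii) the same first reduction would bring us to $m^{\pm}(\lambda)$ on $S_{1}L^{2}$, but now $T_{1}$ is also singular; by Lemma \ref{stop}, however, $T_{2}:=S_{2}vG_{2}vS_{2}$ is invertible on $S_{2}L^{2}$, so I would apply the Feshbach lemma a second time with projection $S_{2}$ and $D_{1}:=(T_{1}+S_{2})^{-1}$. Using $PS_{2}=S_{2}P=0$ from Remark \ref{propertyofS}, the inner Schur complement on $S_{2}L^{2}$ would have leading term $\eta^{2}T_{2}$ and hence inverse of order $\eta^{-2}$; pushing this back through both Feshbach formulas and collecting the $\eta^{-2},\eta^{-1},\eta^{0}$ coefficients would yield the asserted expansion, the $\eta^{-2}$ coefficient reducing after cancellation to exactly $S_{2}(S_{2}vG_{2}vS_{2})^{-1}S_{2}$. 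The Hilbert--Schmidt property of $A^{\pm}_{-1}$ and $A^{\pm}_{0}$ follows because every surviving term carries factors of $v(x),v(y)$ or finite-rank $S_{1},S_{2}$, and $\beta>7$ guarantees the weighted integrability together with the $\partial_{\eta}$-control needed for the $O_{1}(\eta)$ remainder.

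The main technical obstacle will be case (iii): the double Feshbach reduction generates many terms, and the cancellations that isolate exactly $S_{2}(S_{2}vG_{2}vS_{2})^{-1}S_{2}\,\eta^{-2}$ at leading order rely on repeated use of the orthogonality relations $PS_{2}=S_{2}P=0$ and $S_{1}S_{2}=S_{2}S_{1}=S_{2}$. A parallel subtlety in all three cases is to maintain the expansions not merely in operator norm but in the $O_{1}$-class, i.e.\ with pointwise $\partial_{\eta}$-control on the kernel; propagating this control through Neumann series and Schur inversions is what drives the successively stronger decay assumptions $\beta>3,\,5,\,7$.
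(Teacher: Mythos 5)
Your proposal follows essentially the same route as the paper: the inversion is carried out via the Jensen--Nenciu lemma (Lemma \ref{inverseformula}), applied with the projection $S_{1}$ in the resonance case and iterated with $S_{2}$ in the eigenvalue case, with Neumann series for $(M^{\pm}+S_{1})^{-1}$ and $(\widetilde{M}_{1}^{\pm}+S_{2})^{-1}$ and the same orthogonality identities $S_{1}D_{0}=S_{1}$, $S_{2}D_{1}=S_{2}$, $PS_{2}=S_{2}P=0$ doing the cancellations. One bookkeeping caveat: to extract the constant term in case (ii) you must expand $(M^{\pm}(\lambda)+S_{1})^{-1}$ to \emph{second} order, and the quadratic Neumann term $\frac{\|V\|_{L^{1}}^{2}}{16\pi^{2}}D_{0}PD_{0}PD_{0}$ contributes an extra piece $\frac{\|V\|_{L^{1}}^{2}}{16\pi^{2}}\rho\, S_{1}PS_{1}$ (with $\rho=\mathrm{trace}(PD_{0}P)$) to the $\eta^{2}$ coefficient of the Schur complement, which your first-order truncation misses; this only perturbs the explicit coefficient of the $\eta^{0}$ term and does not affect the structure of the expansion or its use in the dispersive estimates.
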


The following lemma  is the main tool to deduce the asymptotic  expansions of $\Big(M^{\pm}(\lambda)\Big)^{-1}$.
\begin{lemma}(\cite[Lemma 2.1]{JN})\label{inverseformula}
	Let $M$ be a closed operator on a Hilbert space $\mathscr{H}$ and $S$ be a projection. Suppose $M+S$ has a bounded inverse. Then $M$ has a bounded inverse if and only if
	\begin{equation*}
		M_{1}:=S-S(M+S)^{-1}S
	\end{equation*}
	has a bounded inverse in $S\mathscr{H}$, and in the case
	\begin{equation*}
		M^{-1}=(M+S)^{-1}+(M+S)^{-1}SM_{1}^{-1}S(M+S)^{-1}.
	\end{equation*}
\end{lemma}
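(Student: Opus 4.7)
My plan is a direct algebraic verification that hinges on two simple identities. From $B := (M+S)^{-1}$ and $(M+S)B = B(M+S) = I$, subtracting the $S$-term immediately gives
\begin{equation*}
  MB = I - SB, \qquad BM = I - BS.
\end{equation*}
Moreover, whenever $M^{-1}$ exists, multiplying the trivial identity $(M+S) - M = S$ by $M^{-1}$ on one side and by $B$ on the other yields the twin resolvent formulas
\begin{equation*}
  M^{-1} - B = M^{-1}SB = BSM^{-1}.
\end{equation*}
Everything else in the lemma reduces to algebra with these four relations, together with $S^2 = S$.

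For the ``if'' direction, I take $M_1^{-1}$ on $S\mathscr{H}$ for granted and propose $K := B + BSM_1^{-1}SB$ as the inverse of $M$. Expanding and using $MB = I - SB$,
\begin{equation*}
  MK = (I - SB) + (I - SB)\,SM_1^{-1}SB = I - SB + (S - SBS)\,M_1^{-1}SB.
\end{equation*}
Since $SB$ takes values in $S\mathscr{H}$ and $S - SBS$ acts as $M_1$ there, the last term collapses to $M_1 M_1^{-1}(SB) = SB$, giving $MK = I$. The check $KM = I$ is perfectly symmetric: after $BM = I - BS$ one recognizes $SBM = S - SBS = M_1$ on $S\mathscr{H}$ (using that $(S - SBS)(I - S) = 0$) and again applies $M_1^{-1} M_1 = I_{S\mathscr{H}}$.

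For the converse, assuming $M^{-1}$ exists, I propose $N := S + SM^{-1}S$ as the inverse of $M_1$ on $S\mathscr{H}$. Directly,
\begin{equation*}
  NM_1 = (S + SM^{-1}S)(S - SBS) = I_{S\mathscr{H}} + S(M^{-1} - B)S - SM^{-1}SBS,
\end{equation*}
and substituting $M^{-1} - B = M^{-1}SB$ shows the last two terms cancel. The mirror identity $M_1 N = I_{S\mathscr{H}}$ is verified the same way, now using the other form $M^{-1} - B = BSM^{-1}$ to cancel $SBSM^{-1}S$.

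The main subtlety, rather than a real obstacle, is bookkeeping about the ambient space: $M$, $B$, and $M^{-1}$ act on all of $\mathscr{H}$, while $M_1$ and $M_1^{-1}$ live on $S\mathscr{H}$. Treating $S$ as the identity on $S\mathscr{H}$ and observing that the outer projections in expressions such as $SM_1^{-1}S$ render the composition independent of how $M_1^{-1}$ is extended off $S\mathscr{H}$ handles this cleanly. Closedness of $M$ together with the bounded invertibility of $M+S$ ensures that $\operatorname{ran}(K) \subset \operatorname{dom}(M)$ and that all compositions above are legitimate on the full Hilbert space.
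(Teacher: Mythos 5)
Your verification is correct. Note that the paper itself gives no proof of this lemma --- it is quoted verbatim from Jensen--Nenciu \cite[Lemma 2.1]{JN} --- and your direct algebraic argument (checking that $B+BSM_1^{-1}SB$ is a two-sided inverse of $M$ via $MB=I-SB$, $BM=I-BS$, and that $S+SM^{-1}S$ inverts $M_1$ on $S\mathscr{H}$ via $M^{-1}-B=M^{-1}SB=BSM^{-1}$) is essentially the standard proof in that reference; your handling of the domain issues for the closed operator $M$ and of the identity $(S-SBS)(I-S)=0$ is exactly the bookkeeping needed to make it rigorous.
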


\begin{proof}[\bf Proof of Theorem \ref{Mexpansions}]
Since $M^{-}(\lambda)=\overline{M^{+}}(\lambda)$, thus we only deal with $M^{+}(\lambda)$ below.

In the regular case, $T_{0}=U+vG_{0}V$ is invertible on $L^{2}(\mathbf{R}^3)$ and $$M^{+}(\lambda)=T_{0}+i\frac{\|V\|_{L^1}}{4\pi}P\eta+O_{1}(\eta^2).$$
Writing $\Big(M^{+}(\lambda)\Big)^{-1}$ into Neumann series, then
\begin{equation*} \Big(M^{+}(\lambda)\Big)^{-1}=\Big(1+i\frac{\|V\|_{L^1}}{4\pi}T_{0}^{-1}P\eta+O_{1}(\eta^2)\Big)^{-1}T_{0}^{-1}=T_{0}^{-1}-i\frac{\|V\|_{L^1}}{4\pi}T_{0}^{-1}PT_{0}^{-1}\eta+O_{1}(\eta^2).
\end{equation*}

If zero is a resonance of $H$, then $T_{0}+S_{1}$ is invertible since $S_{1}$ is the Riesz projection onto $\ker(T_{0})$ and $T_{0}$ is self-adjoint. Since
\begin{equation*}
	M^{+}(\lambda)=T_{0}+i\frac{\|V\|_{L^1}}{4\pi}P\eta+\eta^2 vG_{2}v+O_{1}(\eta^3),
\end{equation*}   	
applying Lemma \ref{inverseformula} to $M^{+}(\lambda)$ with projection $S_{1}$, then
\begin{equation}\label{Mresonace} \Big(M^{+}(\lambda)\Big)^{-1}=\Big(M^{+}(\lambda)+S_{1}\Big)^{-1}+\Big(M^{+}(\lambda)+S_{1}\Big)^{-1}S_{1}\Big(M_{1}^{+}(\lambda)\Big)^{-1}S_{1}\Big(M^{+}(\lambda)+S_{1}\Big)^{-1}
\end{equation}
where $M_{1}^{+}(\lambda)=S_{1}-S_{1}\Big(M^{+}(\lambda)+S_{1}\Big)^{-1}S_{1}$. Writing $\Big(M^{+}(\lambda)+S_{1}\Big)^{-1}$ into Neumann series, we have
\begin{equation*}
	\begin{split}
		\Big(M^{+}(\lambda)+S_{1}\Big)^{-1}=&\,\,\Big(1+i\frac{\|V\|_{L^1}}{4\pi}\eta D_{0}P+\eta^2 D_{0}vG_{2}v+O_{1}(\eta^3)\Big)^{-1}D_{0}\\
		=&\,\, D_{0}-i\frac{\|V\|_{L^1}}{4\pi}\eta D_{0}PD_{0}-\eta^2\Big(D_{0}vG_{2}vD_{0}+\frac{\|V\|_{L^{1}}^{2}}{16\pi^2}D_{0}PD_{0}PD_{0}\Big)+O_{1}(\eta^3)
	\end{split}
\end{equation*}
where $D_{0}=(T_{0}+S_{1})^{-1}$. Note that $\rm{rank}(P)=1$ since $P$ is a projection onto the span of $v$. Thus $PD_{0}P=\rho P$ with $\rho={\rm trace}(PD_{0}P)$. Using $S_{1}D_{0}=D_{0}S_{1}=S_{1}$, then
\begin{equation}
	M_{1}^{+}(\lambda)=i\frac{\|V\|_{L^1}}{4\pi}\eta S_{1}PS_{1}+\eta^2\Bigg(S_{1}vG_{2}vS_{1}+\frac{\|V\|_{L^{1}}^{2}}{16\pi^2}\rho S_{1}PS_{1}\Bigg)+O_{1}(\eta^3).
\end{equation}
In the resonance case, $T_{1}=S_{1}PS_{1}$ is invertible, thus
\begin{equation*}
	\begin{split}		\Big(M_{1}^{+}(\lambda)\Big)^{-1}=&\,\,\frac{4\pi}{i\|V\|_{L^1}}\eta^{-1}\Bigg(S_{1}PS_{1}+\frac{4\pi}{i\|V\|_{L^1}}\eta\Big(S_{1}vG_{2}vS_{1}+\frac{\|V\|_{L^{1}}^{2}}{16\pi^2}\rho S_{1}PS_{1}\Big)+O_{1}(\eta^2)\Bigg)^{-1}\\
		=&\,\, \frac{4\pi}{i\|V\|_{L^1}}\eta^{-1}T_{1}^{-1}+\frac{16\pi^2}{\|V\|_{L^1}^{2}}T_{1}^{-1}\Big(S_{1}vG_{2}vS_{1}+\frac{\|V\|_{L^{1}}^{2}}{16\pi^2}\rho S_{1}PS_{1}\Big)T_{1}^{-1}+O_{1}(\eta).
	\end{split}
\end{equation*}
Substituting the Neumann series of $\Big(M^{+}(\lambda)+S_{1}\Big)^{-1}$ and $\Big(M^{+}(\lambda)\Big)^{-1}$ into identity \eqref{Mresonace}, we obtain
\begin{equation*}
    \begin{split}  	\Big(M^{+}(\lambda)\Big)^{-1}=&-i\frac{4\pi}{\|V\|_{L^1}}S_{1}T_{1}^{-1}S_{1}\eta^{-1}+\Big(D_{0}+\frac{16\pi^2}{\|V\|^{2}_{L^1}}S_{1}T_{1}^{-1}S_{1}S_{1}vG_{2}vS_{1}T_{1}^{-1}S_{1}\Big)\\
    	&-\Big(D_{0}PS_{1}T_{1}^{-1}S_{1}+S_{1}T_{1}^{-1}S_{1}PD_{0}\Big)+O_{1}(\eta).	
    \end{split}
	\end{equation*}
	
	If  zero is an eigenvalue of $H$, then $S_{1}PS_{1}$ is not invertible but $S_{1}PS_{1}+S_{2}$ is invertible.  Denotes
	\begin{equation*}		\widetilde{M}_{1}^{+}(\lambda)=S_{1}PS_{1}+\frac{4\pi}{i\|V\|_{L^1}}\eta\Big(S_{1}vG_{2}vS_{1}+\frac{\|V\|_{L^{1}}^{2}}{16\pi^2}\rho S_{1}PS_{1}\Big)+O_{1}(\eta^2)
	\end{equation*}
	then $\Big(M_{1}^{+}(\lambda)\Big)^{-1}=\frac{4\pi}{i\|V\|_{L^1}}\eta^{-1}\Big(\widetilde{M}_{1}^{+}(\lambda)\Big)^{-1}$. Applying Lemma \ref{inverseformula} to $\widetilde{M}_{1}^{+}(\lambda)$ with projection $S_{2}$, then
	\begin{equation*}		\Big(\widetilde{M}_{1}^{+}(\lambda)\Big)^{-1}=\Big(\widetilde{M}_{1}^{+}(\lambda)+S_{2}\Big)^{-1}+\Big(\widetilde{M}_{1}^{+}(\lambda)+S_{2}\Big)^{-1}S_{2}\Big(M_{2}^{+}(\lambda)\Big)^{-1}S_{2}\Big(\widetilde{M}_{1}^{+}(\lambda)+S_{2}\Big)^{-1}
	\end{equation*}
	with $M_{2}^{+}(\lambda)=S_{2}-S_{2}\Big(\widetilde{M}_{1}^{+}(\lambda)+S_{2}\Big)^{-1}S_{2}$. Writing $\Big(\widetilde{M}_{1}^{+}(\lambda)+S_{2}\Big)^{-1}$ into Neumann series, then
	\begin{equation*}
	    \begin{split}
		\Big(\widetilde{M}_{1}^{+}(\lambda)+S_{2}\Big)^{-1}=&\,\,\Bigg(1+\frac{4\pi}{i\|V\|_{L^1}}\eta D_{1}\Big(S_{1}vG_{2}vS_{1}+\frac{\|V\|^{2}_{L^1}}{16\pi^2}\rho S_{1}PS_{1}\Big)+O_{1}(\eta^2)\Bigg)^{-1}D_{1}\\
		=&\,\, D_{1}-\frac{4\pi}{i\|V\|_{L^1}}\eta D_{1}\Big(S_{1}vG_{2}vS_{1}+\frac{\|V\|^{2}_{L^1}}{16\pi^2}\rho S_{1}PS_{1}\Big)D_{1}+O_{1}(\eta^2)
		\end{split}
	\end{equation*}
	where $D_{1}=(S_{1}PS_{1}+S_{2})^{-1}$. Using $S_{2}D_{1}=D_{1}S_{2}=S_{2}$, we get
	\begin{equation*}
		M_{2}^{+}(\lambda)=\frac{4\pi}{i\|V\|_{L^1}}\eta S_{2}\Big(S_{1}vG_{2}vS_{1}+\frac{\|V\|^{2}_{L^1}}{16\pi^2}\rho S_{1}PS_{1}\Big)S_{2}+O_{1}(\eta^2)=\frac{4\pi}{i\|V\|_{L^1}}\eta S_{2}vG_{2}vS_{2}+O_{1}(\eta^2).
	\end{equation*}
	Since  $S_{2}vG_{2}vS_{2}$ is invertible by Lemma \ref{stop},  then
	\begin{equation*}
		\Big(M_{2}^{+}(\lambda)\Big)^{-1}=i\frac{\|V\|_{L^1}}{4\pi}\eta^{-1}\Big(S_{2}vG_{2}vS_{2}\Big)^{-1}+O_{1}(1).
	\end{equation*}
	Substituting the expansions back step by step, we obtain
	$$\Big(M^{+}(\lambda)\Big)^{-1}=\eta^{-2}S_{2}(S_{2}vG_{2}vS_{2})^{-1}S_{2}+\eta^{-1}A^{+}_{-1}+A^{+}_{0}+O_{1}(\eta)$$
	with $A^{+}_{-1},\,\,A^{+}_{0}$ are Hilbert-Schmidt operators independent of $\eta$.
\end{proof}

\section{The perturbed evolution for low energy}\label{lowenergy}

In this section, our aim is to study the $L^{1}-L^{\infty}$ dispersive estimates of perturbed evolution $e^{-itH}$ for small energy. Here  small energy means the spectral variable $\lambda$ is near the threshold energy $\lambda=0$. The presence of zero resonance or zero eigenvalue affect the asymptotic behavior of the perturbed resolvent $R_{V}^{\pm}(\lambda)$ as $\lambda\rightarrow0$. The effect of the presence of zero energy resonance or zero eigenvalue is only felt in the small energy regime.
\begin{lemma}\label{freeresolventuniformlyestimates}
For $\lambda>0$ and $\eta=\big(\sqrt{1/4+\lambda}-1/2\big)^{1/2}$, then
\begin{equation}
	\sup_{x, y\in\mathbf{R}^3}\Big|R_{0}^{\pm}(\lambda; x, y)\Big|\lesssim \frac{\eta+\sqrt{1+\eta^2}}{1+2\eta^2}
\end{equation}
and
\begin{equation}
	\sup_{x, y\in\mathbf{R}^3}\Bigg|\frac{d}{d\eta}R_{0}^{\pm}(\lambda; x, y)\Bigg|\lesssim \frac{1}{1+2\eta^2}.
\end{equation}
\end{lemma}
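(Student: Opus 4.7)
The plan is to work directly from the explicit kernel formula
\begin{equation*}
R_{0}^{\pm}(\lambda; x, y)=\frac{1}{1+2\eta^2}\cdot\frac{e^{\pm i\eta r}-e^{-\sqrt{1+\eta^2}\, r}}{4\pi r},\qquad r:=|x-y|,
\end{equation*}
that was derived in \eqref{freekernel}, and to reduce both pointwise estimates to two elementary one-variable inequalities, namely $|e^{i\theta}-1|\leq |\theta|$ for $\theta\in\R$ and $|1-e^{-a}|\leq a$ for $a\geq 0$.

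For the first bound, I would write the numerator as $(e^{\pm i\eta r}-1)+(1-e^{-\sqrt{1+\eta^2}\, r})$ and use the two elementary inequalities above to obtain
\begin{equation*}
\left|\frac{e^{\pm i\eta r}-e^{-\sqrt{1+\eta^2}\, r}}{r}\right|\leq \eta+\sqrt{1+\eta^2},
\end{equation*}
uniformly in $r>0$. Multiplying by the prefactor $(1+2\eta^{2})^{-1}$ then gives the claimed first bound. This removes the apparent $1/r$ singularity and simultaneously controls the behavior for large $r$, since the bound is independent of $r$.

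For the second bound, I would differentiate explicitly in $\eta$ and split into two pieces using the product rule. The prefactor contributes $-\tfrac{4\eta}{(1+2\eta^{2})^{2}}$ multiplying the same difference quotient as before, which by the first step is bounded by $(\eta+\sqrt{1+\eta^{2}})/(1+2\eta^{2})^{2}$, and an elementary check shows
\begin{equation*}
\frac{\eta\,(\eta+\sqrt{1+\eta^{2}})}{(1+2\eta^{2})^{2}}\lesssim \frac{1}{1+2\eta^{2}}
\end{equation*}
in both the small-$\eta$ and large-$\eta$ regimes. The second piece uses
\begin{equation*}
\frac{d}{d\eta}\!\left(\frac{e^{\pm i\eta r}-e^{-\sqrt{1+\eta^{2}}\, r}}{4\pi r}\right)=\frac{\pm i\,e^{\pm i\eta r}}{4\pi}+\frac{\eta}{\sqrt{1+\eta^{2}}}\cdot\frac{e^{-\sqrt{1+\eta^{2}}\, r}}{4\pi},
\end{equation*}
in which the troublesome $1/r$ has cancelled; the right-hand side is uniformly bounded by a constant since $|e^{\pm i\eta r}|=1$, $0<e^{-\sqrt{1+\eta^{2}}\, r}\leq 1$, and $\eta/\sqrt{1+\eta^{2}}\leq 1$. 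Multiplying by $(1+2\eta^{2})^{-1}$ and combining with the first piece gives the claimed $(1+2\eta^{2})^{-1}$ bound on $\partial_\eta R_{0}^{\pm}$.

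The only mildly subtle point, which I would highlight, is that the two bounds must be uniform in $r$ for \emph{all} $r>0$; the mean-value-type inequalities $|e^{i\theta}-1|\leq|\theta|$ and $1-e^{-a}\leq a$ give exactly the gain of one factor of $r$ needed to cancel the $1/r$ in the kernel, and this same cancellation occurs automatically in the $\eta$-derivative because the $r$-independent terms produced by differentiating the exponentials no longer have a $1/r$ factor. No further case analysis in $r$ is needed, so the argument is short and largely mechanical once these cancellations are identified.
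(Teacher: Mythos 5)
Your proposal is correct and follows essentially the same route as the paper: the same splitting of the numerator as $(e^{\pm i\eta r}-1)+(1-e^{-\sqrt{1+\eta^2}\,r})$ with the mean-value-type bounds to cancel the $1/r$, and the same product-rule decomposition of $\partial_\eta R_0^{\pm}$ into the prefactor term (reusing the first estimate) and the differentiated-numerator term in which the $1/r$ cancels. Nothing to add.
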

\begin{proof}
	Recall that
	\begin{equation}
		R_{0}^{\pm}(\lambda; x, y)=\frac{1}{1+2\eta^2}\Bigg[\frac{e^{\pm i\eta|x-y|}}{4\pi|x-y|}-\frac{e^{-\sqrt{1+\eta^2}|x-y|}}{4\pi|x-y|}\Bigg]
	\end{equation}
	where $\eta=(\sqrt{1/4+\lambda}-1/2)^{1/2}$. By the mean value theorem, then
	\begin{equation*}
		\big|R_{0}^{\pm}(\lambda; x, y)\big|\lesssim \frac{1}{1+2\eta^2}\Bigg|\frac{e^{\pm i\eta|x-y|}-1}{4\pi|x-y|}+\frac{1-e^{-\sqrt{1+\eta^2}|x-y|}}{4\pi|x-y|}\Bigg|\lesssim \frac{\eta+\sqrt{1+\eta^2}}{1+2\eta^2}.
	\end{equation*}
	Furthermore, we have
	\begin{equation*}
	  \begin{split}
		 \Bigg|\frac{d}{d\eta}R_{0}^{\pm}(\lambda; x, y)\Bigg|=&\,\,\frac{4\eta}{(1+2\eta^2)^2}\Bigg|\frac{e^{\pm i\eta|x-y|}}{4\pi|x-y|}-\frac{e^{-\sqrt{1+\eta^2}|x-y|}}{4\pi|x-y|}\Bigg|\\
		 &\,\,+\frac{1}{1+2\eta^2}\Bigg|\frac{\pm ie^{\pm i\eta|x-y|}}{4\pi}+\frac{\eta(1+\eta^2)^{-1/2}e^{-\sqrt{1+\eta^2}|x-y|}}{4\pi}\Bigg|\\
		 \lesssim &\,\, \frac{\eta(\eta+\sqrt{1+\eta^2})}{(1+2\eta^2)^2}+\frac{1+\eta(1+\eta^2)^{-1/2}}{1+2\eta^2}
		 \lesssim \frac{1}{1+2\eta^2}.
	  \end{split}	
	\end{equation*}
\end{proof}

\begin{proposition}\label{regularlowenergy}
	For $H=\Delta^2-\Delta+V$ with $|V(x)|\lesssim(1+|x|)^{-\beta}$ for some $\beta>3$. Assume that $H$ has no positive embedded eigenvalue. If 0 is a regular point of $H$.

For $0<t\leq1$, then
	\begin{equation*}
		\sup_{x, y\in\mathbf{R}^3}\Bigg|\int_{0}^{t^{-1/4}}e^{-it(\eta^4+\eta^2)}\Big[R^{+}_{V}-R_{V}^{-}\Big](\eta^4+\eta^2; x, y)~(4\eta^3+2\eta)~d\eta\Bigg|\lesssim |t|^{-3/4}.
	\end{equation*}

For $t>1$, then
	\begin{equation*}
		\sup_{x, y\in\mathbf{R}^3}\Bigg|\int_{0}^{t^{-1/2}}e^{-it(\eta^4+\eta^2)}\Big[R^{+}_{V}-R_{V}^{-}\Big](\eta^4+\eta^2; x, y)~(4\eta^3+2\eta)~d\eta\Bigg|\lesssim |t|^{-3/2}.
	\end{equation*}
	
\end{proposition}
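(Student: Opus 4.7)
The plan is to start from Stone's formula combined with the symmetric resolvent identity \eqref{symmetriresolventidentity}, which decomposes
\[ R_V^+ - R_V^- = \mathcal{R}_0(\eta) - \Big[R_0^+ v (M^+)^{-1} v R_0^+ - R_0^- v (M^-)^{-1} v R_0^-\Big], \]
where $\mathcal{R}_0(\eta) = R_0^+ - R_0^-$. The free contribution integrated against $e^{-it(\eta^4+\eta^2)}(4\eta^3+2\eta)\,d\eta$ on either truncated interval is handled exactly by the $I_1$-style direct argument (for $0 < t \leq 1$) and the integration-by-parts argument (for $t > 1$) of Proposition \ref{freedispersive}, yielding the desired $|t|^{-3/4}$ and $|t|^{-3/2}$ bounds respectively.

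For the perturbation correction I would substitute the regular-case expansion of Theorem \ref{Mexpansions},
\[ (M^\pm(\lambda))^{-1} = T_0^{-1} \mp i \tfrac{\|V\|_{L^1}}{4\pi} T_0^{-1} P T_0^{-1}\,\eta + O_1(\eta^2), \]
and telescope the triple-product difference as
\[ (R_0^+ - R_0^-) v (M^+)^{-1} v R_0^+ + R_0^- v\big[(M^+)^{-1} - (M^-)^{-1}\big] v R_0^+ + R_0^- v (M^-)^{-1} v (R_0^+ - R_0^-). \]
In each summand an explicit factor of $\eta$ appears: the outer two via $|\mathcal{R}_0(\eta; x, y)| \lesssim \eta/(1+2\eta^2)$, and the middle one via $(M^+)^{-1} - (M^-)^{-1} = -i\tfrac{\|V\|_{L^1}}{2\pi}\eta\, T_0^{-1} P T_0^{-1} + O_1(\eta^2)$. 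Combining the pointwise bounds of Lemma \ref{freeresolventuniformlyestimates} with the $L^2\to L^2$ operator-norm bound on $(M^\pm)^{-1}$ (uniform in $\eta$ on bounded sets thanks to invertibility of $T_0$ in the regular case, together with the absence of positive embedded eigenvalues) and the factor $\|v\|_{L^2}^2 < \infty$ guaranteed by $\beta > 3$, one obtains a uniform-in-$(x, y)$ kernel bound for the correction of size $O(\eta)$ on $[0, 1]$ and $O(\eta^{-2})$ on $[1, t^{-1/4}]$.

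For $0 < t \leq 1$, taking absolute values of the correction against $(4\eta^3 + 2\eta)\,d\eta$ yields an integrand $\lesssim \eta^2$ on $[0,1]$ and $\lesssim \eta$ on $[1, t^{-1/4}]$, whose integral is $O(1) + O(t^{-1/2})$, dominated by the free bound $|t|^{-3/4}$. For $t > 1$, the interval $[0, t^{-1/2}]$ lies entirely in the small-$\eta$ regime where the expansion is effective, and one integration by parts in $\eta$, using
\[ e^{-it(\eta^4+\eta^2)}(4\eta^3 + 2\eta)\,d\eta = \tfrac{i}{t}\,d\,e^{-it(\eta^4+\eta^2)}, \]
extracts the factor $t^{-1}$: the boundary term at $\eta = 0$ vanishes because $R_V^+(0) = R_V^-(0)$ in the regular case, the boundary term at $\eta = t^{-1/2}$ is controlled by $|R_V^+ - R_V^-|(t^{-1/2}) \lesssim t^{-1/2}$, and the remaining integral is bounded by $\int_0^{t^{-1/2}} |\partial_\eta(R_V^+ - R_V^-)|\,d\eta \lesssim t^{-1/2}$ using $|\partial_\eta R_0^\pm| \lesssim (1+2\eta^2)^{-1}$ from Lemma \ref{freeresolventuniformlyestimates} together with the $O_1$-derivative control on $(M^\pm)^{-1}$. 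The main technical obstacle is verifying that the uniform pointwise kernel bounds on the triple products survive both $\eta$-differentiation and the subtraction $(+)-(-)$ without losing the gained $\eta$-powers; the derivative control is precisely what the $O_1$-notation provides (value and $\eta\partial_\eta$ estimates together), and this is where the decay hypothesis $\beta > 3$ is consumed.
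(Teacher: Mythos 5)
Your proposal is correct and follows essentially the same route as the paper: the symmetric resolvent identity, the regular-case expansion $(M^{\pm})^{-1}=T_0^{-1}+O_1(\eta)$ from Theorem \ref{Mexpansions}, telescoping the triple product so that each summand carries a factor of $\eta$ from $R_0^+-R_0^-$ (or from $(M^+)^{-1}-(M^-)^{-1}$), a direct absolute-value bound for $0<t\le1$, and a single integration by parts with the $O_1$ derivative control for $t>1$. The only differences are organizational (three-term versus two-term telescoping, and splitting the small-time integral at $\eta=1$ instead of using the single crude bound $(1+\eta)$ on $\|(M^{\pm})^{-1}\|_{L^2\to L^2}$), and they do not change the argument.
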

\begin{proof}
Recall that
\begin{equation*}
	R_{V}^{\pm}(\lambda)=R_{0}^{\pm}(\lambda)-R_{0}^{\pm}(\lambda)v\Big(M^{\pm}(\lambda)\Big)^{-1}vR_{0}^{\pm}(\lambda).
\end{equation*}	
It remains to prove the second term satisfies the above bounds by Proposition \ref{freedispersive}. In the regular case,  then $\big(M^{\pm}(\lambda)\big)^{-1}=T^{-1}_{0}+O^{\pm}(\eta)$ in $B(0, 0)$ by Theorem \ref{Mexpansions}. Furthermore, since $L^{\infty}(\mathbf{R}^3)\subset L^{2}_{-s}(\mathbf{R}^3)$ for $s>3/2$, then
\begin{equation}\label{22-1infty}
    \begin{split}
	  \Big\|R_{0}^{\pm}(\lambda)v\Big(M^{\pm}(\lambda)\Big)^{-1}vR_{0}^{\pm}(\lambda)\Big\|_{L^1\rightarrow L^{\infty}}\lesssim & ~\Big\|R_{0}^{\pm}(\lambda)\Big\|_{L^{2}_{s}\rightarrow L^{\infty}}\Big\|v\Big(M^{\pm}(\lambda)\Big)^{-1}v\Big\|_{L^{2}_{-s}\rightarrow L^{2}_{s}}\Big\|R_{0}^{\pm}(\lambda)\Big\|_{L^{1}\rightarrow L^{2}_{-s}}\\
	  \lesssim & ~\Big\|R_{0}^{\pm}(\lambda)\Big\|_{L^{1}\rightarrow L^{\infty}}\Big\|\Big(M^{\pm}(\lambda)\Big)^{-1}\Big\|_{L^{2}\rightarrow L^{2}}\Big\|R_{0}^{\pm}(\lambda)\Big\|_{L^{1}\rightarrow L^{\infty}}.
	 \end{split}
\end{equation}
By \eqref{differetfree}, \eqref{derivativefree} and Lemma \ref{freeresolventuniformlyestimates},  for $t>1$ then
\begin{equation*}
	\begin{split}
		&~\Bigg|\int_{0}^{t^{-1/2}}e^{-it(\eta^4+\eta^2)}\Big[R_{0}^{+}vT^{-1}_{0}vR_{0}^{+}-R_{0}^{-}vT^{-1}_{0}vR_{0}^{-}\Big](\eta^4+\eta^2)~(4\eta^3+2\eta)~d\eta\Bigg|\\
		\lesssim &~\Bigg|\int_{0}^{t^{-1/2}}e^{-it(\eta^4+\eta^2)}\Big[R_{0}^{+}vT^{-1}_{0}vR_{0}^{+}-R_{0}^{-}vT^{-1}_{0}vR_{0}^{+}\Big](\eta^4+\eta^2)d(\eta^4+\eta^2)\Bigg|\\
		&~+\Bigg|\int_{0}^{t^{-1/2}}e^{-it(\eta^4+\eta^2)}\Big[R_{0}^{-}vT^{-1}_{0}vR_{0}^{+}-R_{0}^{-}vT^{-1}_{0}vR_{0}^{-}\Big](\eta^4+\eta^2)d(\eta^4+\eta^2)\Bigg|\\
		\lesssim &~\frac{1}{|t|}~\Bigg|e^{-it(\eta^4+\eta^2)}\Big[\big(R_{0}^{+}-R_{0}^{-}\big)vT^{-1}_{0}vR_{0}^{+}\Big](\eta^4+\eta^2)\Big|_{0}^{t^{-1/2}}\Bigg|+\frac{1}{|t|}\int_{0}^{t^{-1/2}}\bigg|\frac{d}{d\eta}\Big[\big(R_{0}^{+}-R_{0}^{-}\big)vT^{-1}_{0}vR_{0}^{+}\Big](\eta^4+\eta^2)\bigg|d\eta\\
		\lesssim &~|t|^{-1}\Bigg|\eta\Big|_{0}^{t^{-1/2}}\Bigg|+\frac{1}{|t|}\int_{0}^{t^{-1/2}}\big|1+\eta\big|d\eta \lesssim |t|^{-3/2}.
	\end{split}
\end{equation*}
For the remaining term, we have
\begin{equation*}
	\begin{split}
		&~\Bigg|\int_{0}^{t^{-1/2}}e^{-it(\eta^4+\eta^2)}\Big[R_{0}^{\pm}(\eta^4+\eta^2)vO_{1}^{\pm}(\eta)vR_{0}^{\pm}(\eta^4+\eta^2)\Big]~(4\eta^3+2\eta)~d\eta\Bigg|\\
		\lesssim &~ \frac{1}{|t|}\Bigg|e^{-it(\eta^4+\eta^2)}\Big[R_{0}^{\pm}(\eta^4+\eta^2)vO_{1}^{\pm}(\eta)vR_{0}^{\pm}(\eta^4+\eta^2)\Big]\Big|_{0}^{t^{-1/2}}\Bigg|\\
		&~+\frac{1}{|t|}\int_{0}^{t^{-1/2}}\Big|\frac{d}{d\eta}\Big[R_{0}^{\pm}(\eta^4+\eta^2)vO_{1}^{\pm}(\eta)vR_{0}^{\pm}(\eta^4+\eta^2)\Big]\Big|d\eta\\
		\lesssim &~ |t|^{-3/2}+\frac{1}{|t|}\int_{0}^{t^{-1/2}}\big|1+\eta\big|d\eta\lesssim |t|^{-3/2}.
	\end{split}
\end{equation*}

For $0<t\leq 1$, by Lemma \ref{freeresolventuniformlyestimates} then
\begin{equation*}
	\begin{split}
		&~\Bigg|\int_{0}^{t^{-1/4}}e^{-it(\eta^4+\eta^2)}R_{0}^{\pm}(\eta^4+\eta^2)v\Big(M^{\pm}(\eta^4+\eta^2)\Big)^{-1}vR_{0}^{\pm}(\eta^4+\eta^2)~(4\eta^3+2\eta)~d\eta\Bigg|\\
		\lesssim &~ \int_{0}^{t^{-1/4}}\Big|(4\eta^3+2\eta)R_{0}^{\pm}(\eta^4+\eta^2)v\Big(M^{\pm}(\eta^4+\eta^2)\Big)^{-1}vR_{0}^{\pm}(\eta^4+\eta^2)\Big|d\eta\\
		\lesssim &~ \int_{0}^{t^{-1/4}}\Big|(4\eta^3+2\eta)\frac{\eta+\sqrt{1+\eta^2}}{1+2\eta^2}(1+\eta)\frac{\eta+\sqrt{1+\eta^2}}{1+2\eta^2}\Big|d\eta\\
		\lesssim &~ \int_{0}^{t^{-1/4}}(\eta+\eta^2)d\eta\lesssim |t|^{-3/4}.
	\end{split}
\end{equation*}
\end{proof}

\begin{proposition}\label{resonancelowenergy}
	For $H=\Delta^2-\Delta+V$ with $|V(x)|\lesssim(1+|x|)^{-\beta}$ for some $\beta>5$. Assume that $H$ has no positive embedded eigenvalue. If 0 is purely a resonance of $H$.
	
	For $0<t\leq1$, then
	\begin{equation*}
	\sup_{x, y\in\mathbf{R}^3}\Bigg|\int_{0}^{t^{-1/4}}e^{-it(\eta^4+\eta^2)}R^{\pm}_{V}(\eta^4+\eta^2; x, y)~(4\eta^3+2\eta)~d\eta\Bigg|\lesssim |t|^{-3/4}.
	\end{equation*}
	
	For $t>1$, then
	\begin{equation*}
		\sup_{x, y\in\mathbf{R}^3}\Bigg|\int_{0}^{t^{-1/2}}e^{-it(\eta^4+\eta^2)}\Big[R^{+}_{V}-R_{V}^{-}\Big](\eta^4+\eta^2; x, y)~(4\eta^3+2\eta)~d\eta-F(x, y)\Bigg|\lesssim |t|^{-3/2},
	\end{equation*}
	where $F$ is a time dependent finite rank operator satisfying $\|F\|_{L^{1}\rightarrow L^{\infty}}\lesssim |t|^{-1/2}$. Furthermore,
\begin{equation*}
  F(t;x,y)=\int_{0}^{t^{-1/2}}e^{-it(\eta^4+\eta^2)}\frac{4\pi (4\eta^2+2)}{i\|V\|_{L^1}}\bigg[R_{0}^{+}vS_{1}(S_{1}PS_{1})^{-1}S_{1}vR_{0}^{+}+R_{0}^{-}vS_{1}(S_{1}PS_{1})^{-1}S_{1}vR_{0}^{-}\bigg]d\eta.
\end{equation*}
\end{proposition}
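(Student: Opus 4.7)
The starting point is the symmetric resolvent identity
\[
R_{V}^{\pm}(\lambda) = R_{0}^{\pm}(\lambda) - R_{0}^{\pm}(\lambda) v (M^{\pm}(\lambda))^{-1} v R_{0}^{\pm}(\lambda),
\]
combined with the resonance-case expansion from Theorem \ref{Mexpansions}(ii):
\[
(M^{\pm}(\lambda))^{-1} = \mp i \tfrac{4\pi}{\|V\|_{L^{1}}} S_{1}(S_{1}PS_{1})^{-1}S_{1}\,\eta^{-1} + \Gamma_{0} + O_{1}(\eta),
\]
where $\Gamma_{0}$ is the explicit constant Hilbert--Schmidt operator appearing in Theorem \ref{Mexpansions}(ii).

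For $0 < t \le 1$, the plan is to estimate $R_{V}^{\pm}$ pointwise and integrate. Using Lemma \ref{freeresolventuniformlyestimates} and the fact that the singular factor $\eta^{-1}$ in $(M^{\pm}(\lambda))^{-1}$ is tamed by the Stone measure $(4\eta^{3}+2\eta)=2\eta(2\eta^{2}+1)$, the product
\[
\bigl|(4\eta^{3}+2\eta)\, R_{0}^{\pm}(\lambda; x,z_{1})\, v(z_{1}) (M^{\pm}(\lambda))^{-1}(z_{1},z_{2})\, v(z_{2})\, R_{0}^{\pm}(\lambda; z_{2},y)\bigr|
\]
integrated in $z_{1},z_{2}$ is bounded uniformly in $x,y$ by something like $C(\eta+\eta^{2})$. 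Combining with the $R_{0}^{\pm}$ term itself (estimated as in Proposition \ref{regularlowenergy}), the integral over $[0,t^{-1/4}]$ then yields the $|t|^{-3/4}$ bound. Because $v\in L^{2}$ under $\beta>5$ and $S_{1}$ is finite rank, the kernel of $S_{1}(S_{1}PS_{1})^{-1}S_{1}$ is in $L^{2}\otimes L^{2}$, so the sandwich with $v\,R_{0}^{\pm}$ is uniformly bounded in $(x,y)$.

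For $t>1$, I would split
\[
R_{V}^{+}-R_{V}^{-} = (R_{0}^{+}-R_{0}^{-}) - \bigl[R_{0}^{+}v(M^{+})^{-1}vR_{0}^{+} - R_{0}^{-}v(M^{-})^{-1}vR_{0}^{-}\bigr]
\]
and treat the three pieces of $(M^{\pm})^{-1}$ separately. The free difference integrated over $[0,t^{-1/2}]$ is controlled by the same van der Corput / integration-by-parts argument as in Proposition \ref{freedispersive} and contributes $|t|^{-3/2}$. The contributions coming from the constant piece $\Gamma_{0}$ and the $O_{1}(\eta)$ remainder are handled exactly as in the regular case (Proposition \ref{regularlowenergy}): one integrates by parts once using $e^{-it(\eta^{4}+\eta^{2})}\,d(\eta^{4}+\eta^{2}) = \frac{i}{t}\,de^{-it(\eta^{4}+\eta^{2})}$, then uses Lemma \ref{freeresolventuniformlyestimates} (and the trivial bound on the derivative of $\Gamma_{0}$ and on the $O_{1}(\eta)$ piece) to produce the factor $|t|^{-3/2}$ from the gain of $|t|^{-1}$ times an $\eta$-integral over a range of length $t^{-1/2}$. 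The singular piece $\mp i\tfrac{4\pi}{\|V\|_{L^{1}}}S_{1}(S_{1}PS_{1})^{-1}S_{1}\,\eta^{-1}$, after insertion into the symmetric identity and multiplication by $(4\eta^{3}+2\eta)=2\eta(2\eta^{2}+1)$, produces precisely the operator $F$ (up to the overall sign convention): the $\eta^{-1}$ is cancelled, leaving the bounded factor $(4\eta^{2}+2)$. Since the integrand defining $F$ is uniformly bounded in $(x,y)$ and the integration range is $[0,t^{-1/2}]$, we immediately get $\|F\|_{L^{1}\to L^{\infty}}\lesssim t^{-1/2}$, and $F$ inherits finite rank from $S_{1}(S_{1}PS_{1})^{-1}S_{1}$.

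The main technical obstacle is the $\Gamma_{0}$ contribution for large $t$: unlike the free/$O_{1}(\eta)$ pieces, one cannot obtain cancellation between the $+$ and $-$ branches inside $\Gamma_{0}$ itself (it is branch-independent), so all of the oscillation must come from the two $R_{0}^{\pm}$ factors sandwiching it. One writes the difference
\[
R_{0}^{+}v\Gamma_{0}vR_{0}^{+} - R_{0}^{-}v\Gamma_{0}vR_{0}^{-}
= (R_{0}^{+}-R_{0}^{-})v\Gamma_{0}vR_{0}^{+} + R_{0}^{-}v\Gamma_{0}v(R_{0}^{+}-R_{0}^{-}),
\]
so that each term carries a factor $(R_{0}^{+}-R_{0}^{-})$, which by \eqref{differetfree}--\eqref{derivativefree} contributes the small factor $\eta$ (and bounded $\partial_{\eta}$), making the single integration by parts successful and producing $|t|^{-3/2}$. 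Bounding the contribution of the $O_{1}(\eta)$ piece is similar but easier since $\eta$ is already present. Carefully tracking the $\Gamma_{0}$ splitting is the only delicate step; everything else is parallel to the regular case already established in Proposition \ref{regularlowenergy}.
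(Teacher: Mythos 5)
Your proposal follows essentially the same route as the paper: the same three-term splitting of $(M^{\pm})^{-1}$ into the $\eta^{-1}$-singular, constant, and $O_{1}(\eta)$ pieces, the same pointwise estimation over $[0,t^{-1/4}]$ for small $t$, the same rebracketing with a factor $(R_{0}^{+}-R_{0}^{-})$ plus a single integration by parts for the branch-independent and remainder pieces, and the same identification of the $\eta^{-1}$ piece (after cancellation against the Stone measure $2\eta(2\eta^{2}+1)$) with the operator $F$. The only minor imprecision is that for $0<t\le 1$ the singular piece yields an integrand that is bounded near $\eta=0$ rather than $O(\eta+\eta^{2})$, but since $\int_{0}^{t^{-1/4}}1\,d\eta=t^{-1/4}\le t^{-3/4}$ the stated conclusion is unaffected.
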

\begin{proof}
	In the resonance case, by Theorem \ref{Mexpansions} in $B(0, 0)$:
	\begin{equation*}
	   \begin{split}
		\Big(M^{\pm}(\eta^4+\eta^2)\Big)^{-1}=&\,\,\mp i\frac{4\pi}{\|V\|_{L^1}}S_{1}(S_{1}PS_{1})^{-1}S_{1}\eta^{-1}-\Big(D_{0}PS_{1}(S_{1}PS_{1})^{-1}S_{1}+S_{1}(S_{1}PS_{1})^{-1}S_{1}PD_{0}\Big)\\
		&\,\,+\Big(D_{0}+\frac{16\pi^2}{\|V\|^{2}_{L^1}}S_{1}(S_{1}PS_{1})^{-1}S_{1}S_{1}vG_{2}vS_{1}(S_{1}PS_{1})^{-1}S_{1}\Big)+O_{1}(\eta)\\
		:=&\,\, M_{-1}^{\pm}\eta^{-1}+M_{0}+O_{1}(\eta).	 	
	   \end{split}
	\end{equation*}
	For $0<t\leq 1$, by Lemma \ref{freeresolventuniformlyestimates} and \eqref{22-1infty}, then
	\begin{equation*}
		\begin{split}
			&~\Bigg|\int_{0}^{t^{-1/4}}e^{-it(\eta^4+\eta^2)}R_{0}^{\pm}(\eta^4+\eta^2)v\Big(M^{\pm}(\eta^4+\eta^2)\Big)^{-1}vR_{0}^{\pm}(\eta^4+\eta^2)~(4\eta^3+2\eta)~d\eta\Bigg|\\
			\lesssim &~ \int_{0}^{t^{-1/4}}(4\eta^3+2\eta)\Big|R_{0}^{\pm}(\eta^4+\eta^2)v\Big(M^{\pm}(\eta^4+\eta^2)\Big)^{-1}vR_{0}^{\pm}(\eta^4+\eta^2)\Big|d\eta\\
			\lesssim &~ \int_{0}^{t^{-1/4}}(4\eta^3+2\eta)\frac{\eta+\sqrt{1+\eta^2}}{1+2\eta^2}(\eta^{-1}+1+\eta)\frac{\eta+\sqrt{1+\eta^2}}{1+2\eta^2}d\eta\\
			\lesssim &~\int_{0}^{t^{-1/4}}(1+\eta+\eta^2)d\eta\lesssim |t|^{-3/4}.
		\end{split}
	\end{equation*}
	For $t>1$,  we only need to deal with the term $R_{0}^{\pm}(\lambda)v\Big(M^{\pm}(\lambda)\Big)^{-1}vR_{0}^{\pm}(\lambda)$ by Proposition \ref{freedispersive} and the symmetric resolvent identity \eqref{symmetriresolventidentity}. Since
	\begin{equation*}
		\begin{split}
			&~R_{0}^{+}(\eta^4+\eta^2)v\Big(M^{+}(\eta^4+\eta^2)\Big)^{-1}vR_{0}^{+}(\eta^4+\eta^2)-R_{0}^{-}(\eta^4+\eta^2)v\Big(M^{-}(\eta^4+\eta^2)\Big)^{-1}vR_{0}^{-}(\eta^4+\eta^2)\\
			=&~R_{0}^{+}(\eta^4+\eta^2)v\Big(M_{-1}^{+}\eta^{-1}\Big)vR_{0}^{+}(\eta^4+\eta^2)-R_{0}^{-}(\eta^4+\eta^2)v\Big(M_{-1}^{-}\eta^{-1}\Big)vR_{0}^{-}(\eta^4+\eta^2)\\
			&+~R_{0}^{+}(\eta^4+\eta^2)vM_{0}vR_{0}^{+}(\eta^4+\eta^2)-R_{0}^{-}(\eta^4+\eta^2)vM_{0}vR_{0}^{-}(\eta^4+\eta^2)\\
			&+~R_{0}^{+}(\eta^4+\eta^2)vO_{1}^{+}(\eta)vR_{0}^{+}(\eta^4+\eta^2)-R_{0}^{-}(\eta^4+\eta^2)vO_{1}^{-}(\eta)vR_{0}^{-}(\eta^4+\eta^2)\\
			:=&~I(\eta; x, y)+II(\eta; x, y)+III(\eta; x, y).
		\end{split}
	\end{equation*}
	For $II(\eta; x, y)$ and $s>3/2$, we have
	\begin{equation*}
		\begin{split}
			~\bigg\|\Big[\big(R_{0}^{+}-R_{0}^{-}\big)vM_{0}vR_{0}^{+}\Big](\eta^4+\eta^2)\bigg\|_{L^{1}\rightarrow L^{\infty}}
			\lesssim &~ \Big\|R_{0}^{+}-R_{0}^{-}\Big\|_{L^{2}_{s}\rightarrow L^{\infty}}\Big\|vM_{0}v\Big\|_{L^{2}_{-s}\rightarrow L^{2}_{s}}\Big\|R_{0}^{+}\Big\|_{L^{1}\rightarrow L^{2}_{-s}}\\
			\lesssim &~ \Big\|R_{0}^{+}-R_{0}^{-}\Big\|_{L^{1}\rightarrow L^{\infty}}\Big\|M_{0}\Big\|_{L^{2}\rightarrow L^{2}}\Big\|R_{0}^{+}\Big\|_{L^{1}\rightarrow L^{\infty}}\\
			\lesssim &~\frac{\eta}{1+2\eta^2}\frac{\eta+\sqrt{1+\eta^2}}{1+2\eta^2}\lesssim \eta (1+2\eta^2)^{-3/2}.
		\end{split}
	\end{equation*}
	Similarly, by Lemma \ref{freeresolventuniformlyestimates} we have
	\begin{equation*}
		\Bigg\|\frac{d}{d\eta}\Big[\big(R_{0}^{+}-R_{0}^{-}\big)vM_{0}vR_{0}^{+}\Big](\eta^4+\eta^2)\Bigg\|_{L^{1}\rightarrow L^{\infty}}\lesssim \frac{\eta+\sqrt{1+\eta^2}}{(1+2\eta^2)^{2}}.
	\end{equation*}
	Thus we get
	\begin{equation*}
		\begin{split}
			&~\Bigg|\int_{0}^{t^{-1/2}}e^{-it(\eta^4+\eta^2)}II(\eta; x, y)~(4\eta^3+2\eta)~d\eta\Bigg|\\
			\lesssim &~ \Bigg|\int_{0}^{t^{-1/2}}e^{-it(\eta^4+\eta^2)}\Big[R_{0}^{+}-R_{0}^{-}\Big](\eta^4+\eta^2)vM_{0}vR_{0}^{+}(\eta^4+\eta^2)~d(\eta^4+\eta^2)\Bigg|\\
			&~\quad+\Bigg|\int_{0}^{t^{-1/2}}e^{-it(\eta^4+\eta^2)}R_{0}^{-}(\eta^4+\eta^2)vM_{0}v\Big[R_{0}^{+}-R_{0}^{-}\Big](\eta^4+\eta^2)~d(\eta^4+\eta^2)\Bigg|
			\end{split}
	\end{equation*}
	\begin{equation*}
	\begin{split}
			\lesssim &~ \frac{2}{|t|}\Bigg|e^{-it(\eta^4+\eta^2)}\Big[R_{0}^{+}-R_{0}^{-}\Big](\eta^4+\eta^2)vM_{0}vR_{0}^{+}(\eta^4+\eta^2)\Big|_{0}^{t^{-1/2}}\Bigg|\\
			&~\quad+\frac{2}{|t|}\int_{0}^{t^{-1/2}}\Bigg|\frac{d}{d\eta}\bigg(\Big[R_{0}^{+}-R_{0}^{-}\Big](\eta^4+\eta^2)vM_{0}vR_{0}^{+}(\eta^4+\eta^2)\bigg)\Bigg|d\eta\\
			\lesssim &~ |t|^{-3/2}+\frac{1}{|t|}\int_{0}^{t^{-1/2}}\frac{\eta+\sqrt{1+\eta^2}}{(1+2\eta^2)^2}d\eta \lesssim~ |t|^{-3/2}.
		\end{split}
	\end{equation*}
	
	For the third term $III(\eta; x, y)$, by the same argument as for $II(\eta; x, y)$,  we have
	\begin{equation*}
	  \begin{split}
		&~\Bigg|\int_{0}^{t^{-1/2}}e^{-it(\eta^4+\eta^2)}R_{0}^{\pm}(\eta^4+\eta^2)vO_{1}(\eta)vR_{0}^{\pm}(\eta^4+\eta^2)~(4\eta^3+2\eta)~d\eta\Bigg|\\
		\lesssim &~ \frac{1}{|t|}\Bigg|e^{-it(\eta^4+\eta^2)}R_{0}^{\pm}(\eta^4+\eta^2)vO_{1}(\eta)vR_{0}^{\pm}(\eta^4+\eta^2)\bigg|_{0}^{t^{-1/2}}\Bigg|\\
		&~+\frac{1}{|t|}\int_{0}^{t^{-1/2}}\Bigg|\frac{d}{d\eta}\Big[R_{0}^{\pm}(\eta^4+\eta^2)vO_{1}(\eta)vR_{0}^{\pm}(\eta^4+\eta^2)\Big]\Bigg|d\eta\\
		\lesssim &~|t|^{-3/2}+\frac{1}{|t|}\int_{0}^{t^{-1/2}}\frac{1}{1+2\eta^2}+\frac{\eta+\sqrt{1+\eta^2}}{(1+2\eta^2)^2}d\eta \lesssim |t|^{-3/2}.
	  \end{split}
	\end{equation*}
	
	Now, we show that the first term $I(\eta; x, y)$ only contributes $|t|^{-1/2}$. Since
	\begin{equation*}
		\begin{split}
			&~\Bigg|\int_{0}^{t^{-1/2}}e^{-it(\eta^4+\eta^2)}R_{0}^{\pm}(\eta^4+\eta^2)v\big(M_{-1}^{\pm}\eta^{-1}\big)vR_{0}^{\pm}(\eta^4+\eta^2)~(4\eta^3+2\eta)~d\eta\Bigg|\\
			\lesssim &~ \int_{0}^{t^{-1/2}}\Bigg|(4\eta^3+2\eta)R_{0}^{\pm}(\eta^4+\eta^2)v\big(M_{-1}^{\pm}\eta^{-1}\big)vR_{0}^{\pm}(\eta^4+\eta^2)\Bigg|~d\eta\\
			\lesssim &~ \int_{0}^{t^{-1/2}}\Bigg|(4\eta^3+2\eta)\frac{\eta+\sqrt{1+\eta^2}}{1+2\eta^2}\eta^{-1}\frac{\eta+\sqrt{1+\eta^2}}{1+2\eta^2}\Bigg|~d\eta\lesssim~ \int_{0}^{t^{-1/2}}1d\eta\lesssim |t|^{-1/2}.
		\end{split}
	\end{equation*}
	Let $F(t; x, y)=\int_{0}^{t^{-1/2}}e^{-it(\eta^4+\eta^2)}I(\eta; x, y)(4\eta^3+2\eta)~d\eta$. Then $\|F\|_{L^{1}\rightarrow L^{\infty}}\lesssim |t|^{-1/2}$. Furthermore, recall that the projection $S_{1}$ is of finite rank, then $M_{-1}^{\pm}=\mp i\frac{4\pi}{\|V\|_{L^1}}S_{1}\big(S_{1}PS_{1}\big)^{-1}S_{1}$ is of finite rank.
\end{proof}

\begin{proposition}\label{eigenvaluelowenergy}
	For $H=\Delta^2-\Delta+V$ with $|V(x)|\lesssim(1+|x|)^{-\beta}$ for some $\beta>7$. Assume that $H$ has no positive embedded eigenvalue. If 0 is purely an eigenvalue or 0 is both  resonance and eigenvalue of $H$.
	
For $0<t\leq1$, then
	\begin{equation}\label{eigensmallt}
		\sup_{x, y\in\mathbf{R}^3}\Bigg|\int_{0}^{t^{-1/4}}e^{-it(\eta^4+\eta^2)}\Big[R^{+}_{V}-R^{-}_{V}\Big](\eta^4+\eta^2; x, y)~(4\eta^3+2\eta)~d\eta\Bigg|\lesssim |t|^{-3/4}.
	\end{equation}

For $t>1$, then
	\begin{equation}\label{eigenlarget}
		\sup_{x, y\in\mathbf{R}^3}\Bigg|\int_{0}^{t^{-1/2}}e^{-it(\eta^4+\eta^2)}\Big[R^{+}_{V}-R_{V}^{-}\Big](\eta^4+\eta^2; x, y)~(4\eta^3+2\eta)~d\eta-G(x, y)\Bigg|\lesssim |t|^{-3/2}
	\end{equation}
	where $G$ is a time dependent finite rank operator satisfying $\|G\|_{L^{1}\rightarrow L^{\infty}}\lesssim |t|^{-1/2}$. Furthermore,
\begin{equation*}
G(t;x,y)=F(t;x,y)+\int_{0}^{t^{-1/2}}e^{-it(\eta^4+\eta^2)}(4\eta+\frac{2}{\eta})\bigg[R_{0}^{+}vS_{2}(S_{2}vG_{2}vS_{2})^{-1}S_{2}vR_{0}^{+}-R_{0}^{-}vS_{2}(S_{2}vG_{2}vS_{2})^{-1}S_{2}vR_{0}^{-}\bigg]d\eta.
\end{equation*}
\end{proposition}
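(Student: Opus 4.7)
The plan parallels the proof of Proposition \ref{resonancelowenergy} but must cope with the stronger $\eta^{-2}$ singularity in the expansion of $(M^\pm)^{-1}$ provided by part (iii) of Theorem \ref{Mexpansions}. First I would insert the symmetric resolvent identity \eqref{symmetriresolventidentity} into the Stone integral and isolate the free part $R_0^+-R_0^-$, which is handled by Proposition \ref{freedispersive}. For the remaining piece $R_0^\pm v (M^\pm)^{-1} v R_0^\pm$ I would substitute
\begin{equation*}
(M^\pm(\lambda))^{-1}=\eta^{-2}S_2(S_2vG_2vS_2)^{-1}S_2+\eta^{-1}A^\pm_{-1}+A^\pm_0+O_1(\eta)
\end{equation*}
and analyse the four resulting contributions separately.

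For $t>1$, the $\eta^{-1}A^\pm_{-1}$ contribution reproduces the finite-rank operator $F(t;x,y)$ from Proposition \ref{resonancelowenergy}: inspecting the construction in the proof of Theorem \ref{Mexpansions} one verifies $A^\pm_{-1}=\mp i\tfrac{4\pi}{\|V\|_{L^1}}S_1(S_1PS_1)^{-1}S_1$, using $S_2\le S_1$. The $\eta^{-2}$ contribution is exactly the second summand in the definition of $G$; the cancellation $PS_2=S_2P=0$ of Remark \ref{propertyofS} gives $vG_1 v S_2=0=S_2 vG_1 v$, so $R_0^\pm v S_2=G_0 v S_2+O(\eta^2)$ and consequently the integrand $(4\eta+2/\eta)\bigl[R_0^+ v S_2(\cdots)S_2 v R_0^+ - R_0^- v S_2(\cdots)S_2 v R_0^-\bigr]$ is integrable on $[0,t^{-1/2}]$ with bound $\lesssim |t|^{-1/2}$. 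For the regular pieces $A^\pm_0$ and $O_1(\eta)$ I would integrate by parts using $\tfrac{d}{d\eta}e^{-it(\eta^4+\eta^2)}=-it(4\eta^3+2\eta)e^{-it(\eta^4+\eta^2)}$ and apply Lemma \ref{freeresolventuniformlyestimates} to the boundary and derivative terms; this mirrors the treatment of $II(\eta;x,y)$ and $III(\eta;x,y)$ in Proposition \ref{resonancelowenergy} and yields the required $|t|^{-3/2}$ bound.

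For $0<t\leq 1$ on $[0,t^{-1/4}]$ I would work with the full difference $R_V^+-R_V^-$, so that the $\eta^{-2}$ part of $(M^\pm)^{-1}$, being $+$/$-$ symmetric, contributes only through off-diagonal terms $R_0^+ v S_2(\cdots)S_2 v(R_0^+-R_0^-)+(R_0^+-R_0^-)v S_2(\cdots)S_2 v R_0^-$; combined with $vG_1 v S_2=0$ the integrand is uniformly bounded near $\eta=0$ and grows at most polynomially for large $\eta$, so integration against $(4\eta^3+2\eta)$ on $[0,t^{-1/4}]$ gives $O(|t|^{-3/4})$ just as in Proposition \ref{freedispersive}. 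The $\eta^{-1}$, $A^\pm_0$ and $O_1(\eta)$ pieces are handled by the same pointwise estimate as in Proposition \ref{resonancelowenergy}.

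The main obstacle is controlling the $\eta^{-2}$ singularity that is absent in the regular and resonance cases. Two cancellation mechanisms have to be combined: first, the identity $PS_2=0$, which kills the $G_1$ contribution to $R_0^\pm v S_2$ and forces the expansion of $R_0^\pm v S_2$ to start at $G_0 v S_2$ with the next term only of order $\eta^2$; second, the $+/-$ symmetry of the leading $\eta^{-2}$ coefficient $S_2(S_2vG_2vS_2)^{-1}S_2$, which means that the $\eta^{-2}$ contribution to $R_V^+-R_V^-$ enters only through differences $R_0^+-R_0^-=O(\eta)$. Together, these two mechanisms turn the naively divergent $\eta^{-2}$ singularity into an integrable contribution and produce a finite-rank operator $G$ (supported on $S_1 L^2$) with the claimed $|t|^{-1/2}$ bound.
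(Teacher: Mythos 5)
Your proposal follows essentially the same route as the paper: insert the symmetric resolvent identity, substitute the expansion $(M^{\pm})^{-1}=\eta^{-2}A_{-2}+\eta^{-1}A^{\pm}_{-1}+A^{\pm}_{0}+O_{1}(\eta)$ from Theorem \ref{Mexpansions}(iii), treat the $A^{\pm}_{0}$ and $O_{1}(\eta)$ pieces by integration by parts as in Proposition \ref{resonancelowenergy}, and absorb the singular pieces into $G$. For the crucial $\eta^{-2}A_{-2}$ term the paper uses only one of your two ``cancellation mechanisms'': since $A_{-2}$ is the same for $+$ and $-$, telescoping gives factors $R_{0}^{+}-R_{0}^{-}=O(\eta)$, and then $(4\eta^{3}+2\eta)\cdot\eta\cdot\eta^{-2}\cdot O(1)=O(1)$ already yields the $|t|^{-1/2}$ bound on $[0,t^{-1/2}]$ (and $|t|^{-1/4}\lesssim|t|^{-3/4}$ on $[0,t^{-1/4}]$). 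Your additional appeal to $PS_{2}=S_{2}P=0$ (hence $G_{1}vS_{2}=0$) is correct but not needed here; the paper only exploits that identity later, in Lemma \ref{middleRV}.

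One concrete assertion in your write-up is wrong: you claim that inspecting Theorem \ref{Mexpansions} gives $A^{\pm}_{-1}=\mp i\frac{4\pi}{\|V\|_{L^{1}}}S_{1}(S_{1}PS_{1})^{-1}S_{1}$. In the eigenvalue case $T_{1}=S_{1}PS_{1}$ is by definition \emph{not} invertible on $S_{1}L^{2}$, so this operator does not exist; tracing the construction, the order-$\eta^{-1}$ coefficient involves $D_{1}=(S_{1}PS_{1}+S_{2})^{-1}$ together with further corrections coming from the $S_{2}(M_{2}^{+})^{-1}S_{2}$ block, and only reduces to the resonance-case formula when $S_{2}=0$. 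This does not damage the estimates --- whatever $A^{\pm}_{-1}$ is, the $\eta^{-1}A^{\pm}_{-1}$ contribution is a bounded integrand on $[0,t^{-1/2}]$ and hence $O(|t|^{-1/2})$, so it belongs in $G$ --- but it does mean your identification of that contribution with the specific operator $F$ of Proposition \ref{resonancelowenergy} is not justified as written. To be fair, the paper's own proof is equally imprecise at exactly this point (it never computes $A^{\pm}_{-1}$ and even asserts, inconsistently with its inclusion of $F$ in $G$, that this term contributes $|t|^{-3/2}$), so your proposal matches the paper's level of rigor; just do not present the formula for $A^{\pm}_{-1}$ as something that can be ``verified.''
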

\begin{proof}
Recall that $0$ is both resonance and eigenvalue of $H$ if $S_{2}=S_{1}\neq0$. $0$ is purely an eigenvalue of $H$ if $S_{2}< S_{1}$ strictly, see Remark \ref{propertyofS}. Here we deal with the case $S_{2}<S_{1}$. The following argument also holds for $S_{2}=S_{1}$.
	If zero is an eigenvalue of $H$,  by Theorem \ref{Mexpansions}, then
	\begin{equation*}
		\Big(M^{\pm}(\eta^4+\eta^2)\Big)^{-1}=\eta^{-2}A_{-2}+\eta^{-1}A_{-1}^{\pm}+A_{0}^{\pm}+O_{1}(\eta).
	\end{equation*}
	where $A_{-2}=S_{2}\Big(S_{2}vG_{2}vS_{2}\Big)^{-1}S_{2}$. Applying the similar argument as in Theorem \ref{resonancelowenergy}, we know for $0<t\leq 1$, the term $\eta^{-1}A_{-1}^{\pm}$, $A_{0}^{\pm}$ and the remaining term $O_{1}(\eta)$ satisfy the expected estimates \eqref{eigensmallt}. For $t>1$, $A_{0}^{\pm}$ and $O_{1}(\eta)$ contributes $|t|^{-3/2}$ in  estimate \eqref{eigenlarget}. The term $\eta^{-1}A_{-1}^{\pm}$ contributes $|t|^{-3/2}$ in estimate \eqref{eigenlarget}.
	
	Next, we show the first term $\eta^{-2}A_{-2}$ satisfies the estimates we needed.  By \eqref{22-1infty} and Lemma \ref{freeresolventuniformlyestimates}, then
	\begin{equation*}
		\begin{split}
			&~\Bigg|\int_{0}^{t^{-1/2}}e^{-it(\eta^4+\eta^2)}\Big[R^{+}_{0}(\eta^4+\eta^2)v\big(\eta^{-2}A_{-2}\big)vR^{+}_{0}(\eta^4+\eta^2)-R^{-}_{0}(\eta^4+\eta^2)v\big(\eta^{-2}A_{-2}\big)vR^{-}_{0}(\eta^4+\eta^2)\Big]~d(\eta^4+\eta^2)\Bigg|\\
			\lesssim &~\Bigg|\int_{0}^{t^{-1/2}}e^{-it(\eta^4+\eta^2)}\Big[R^{+}_{0}-R^{-}_{0}\Big](\eta^4+\eta^2)v\big(\eta^{-2}A_{-2}\big)vR^{+}_{0}(\eta^4+\eta^2)~d(\eta^4+\eta^2)\Bigg|\\
			&~\quad+ \Bigg|\int_{0}^{t^{-1/2}}e^{-it(\eta^4+\eta^2)}R^{-}_{0}(\eta^4+\eta^2)v\big(\eta^{-2}A_{-2}\big)v\Big[R^{+}_{0}-R^{-}_{0}\Big](\eta^4+\eta^2)~d(\eta^4+\eta^2)\Bigg|\\
			\lesssim &~ \int_{0}^{t^{-1/2}}\Bigg|(4\eta^3+2\eta)\Big[R^{+}_{0}-R^{-}_{0}\Big](\eta^4+\eta^2)v\big(\eta^{-2}A_{-2}\big)vR^{+}_{0}(\eta^4+\eta^2)\Bigg|~d\eta	\\
			\lesssim &~ 	\int_{0}^{t^{-1/2}}\Bigg|(4\eta^3+2\eta)\frac{\eta}{1+2\eta^2}\eta^{-2}\frac{\eta+\sqrt{1+\eta^2}}{1+2\eta^2}\Bigg|~d\eta\lesssim  \int_{0}^{t^{-1/2}}1~d\eta\lesssim |t|^{-1/2}.
		\end{split}
	\end{equation*}
	Similarly, we get the expected estimates for $0<t\leq1$ with $|t|^{-1/4}$. Let
	\begin{equation*}
		\begin{split}
			G(t; x, y)=F(t; x, y)+\int_{0}^{t^{-1/2}}&e^{-it(\eta^4+\eta^2)}\bigg[R^{+}_{0}(\eta^4+\eta^2)v\big(\eta^{-2}A_{-2}\big)vR^{+}_{0}(\eta^4+\eta^2)\\
			&-R^{-}_{0}(\eta^4+\eta^2)v\big(\eta^{-2}A_{-2}\big)vR^{-}_{0}(\eta^4+\eta^2)\bigg]~(4\eta^3+2\eta)~d\eta.
		\end{split}
	\end{equation*}
	Thus $G(t; x, y)$ satisfies $\big\|G\big\|_{L^{1}\rightarrow L^{\infty}}\lesssim |t|^{-1/2}$ for $t>1$. Furthermore, $S_{2}$ is  finite rank by Remark \ref{propertyofS}. Hence $G$ is  finite rank.
\end{proof}

\section{The perturbed evolution for high energy}\label{highenergy}
In this part, we aim to show the proof of  dispersive bound for high energy portion of the perturbed evolution.  This section include two subsections. In the first subsection, under the assumption that $H$ has no positive embedded eigenvalue, we prove decay estimates for $R^{\pm}_{V}(\lambda)$ with $\lambda\rightarrow\infty$ in $B(s, -s)$ using the limiting absorption principle. In the second subsection, we show the high energy bounds of Theorem \ref{timedecay}.

\subsection{High energy decay estimates of $R_{V}(z)$}
Denote by $\mathbf{C}^{+}=\{\rm{Im}\,z>0\}$ the open upper half complex plane, and by $\mathbf{C}^{-}=\{\rm{Im}\,z<0\}$ the open lower half complex plane. Define $\Xi$ be the disjoint union of $\overline{\mathbf{C}^{+}}$ and $\overline{\mathbf{C}^{-}}$ with the identified points $z\leq0$. Denote  $V_0:=\min\{V(x), x\in\mathbf{R}^{d}\}$. Since for any $\lambda\in \mathbf{R}\setminus [V_0, +\infty)$, $H-\lambda=\Delta^{2}-\Delta+V-\lambda>0$, then for the resolvent set $\rho(H)$ of $H$, we have $\mathbf{C}\setminus [V_0, +\infty)\subseteq\rho(H)$.
\begin{theorem}\label{continuity}
Let $k=0,1,2,3,\cdots$, and $|V(x)|\lesssim(1+|x|)^{-\beta}$ with $\beta>k+1$. Assuming that $H$ has no positive embedded eigenvalue. For any $s>k+1/2$,  then $R^{(k)}_{V}(z)\in B(s, -s)$ is continuous for $z\in\Xi\setminus[V_0, 0]$. Further, the bound
\begin{equation}\label{extendhighenergy}
  \big\|R^{(k)}_{V}(z)\big\|_{B(s, -s)}
  = O\big(|z|^{-(3+3k)/4}\big)
\end{equation}
holds as $z\rightarrow \infty$ in $\Xi\setminus[V_0, 0]$.
\end{theorem}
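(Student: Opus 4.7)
The plan is to reduce to the free resolvent via the symmetric identity $R_V(z)=R_0(z)-R_0(z)vM(z)^{-1}vR_0(z)$, obtain high-energy decay for $R_0^{(k),\pm}(z)$ from the splitting identity \eqref{freelimiting} together with known Laplacian bounds, and then handle invertibility and continuity of $M^\pm(\lambda)=U+vR_0^\pm(\lambda)v$ on $L^2(\R^3)$ by Fredholm theory combined with the no-embedded-eigenvalue hypothesis.

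First I would derive $\|R_0^{(k),\pm}(z)\|_{B(s,-s)}=O(|z|^{-(3+3k)/4})$ from \eqref{freelimiting}. Applying Leibniz in $z$ and the chain rule through $\zeta(z)=\sqrt{1/4+z}-1/2\sim\sqrt{z}$, one invokes the Agmon--Kuroda--Jensen estimate $\|R_\Delta^{(j),\pm}(\zeta)\|_{B(s,-s)}=O(|\zeta|^{-(j+1)/2})$ for $s>j+1/2$, combined with derivatives of the prefactor $(2\sqrt{1/4+z})^{-1}$ of size $O(|z|^{-1/2-(k-j)})$. Each Leibniz term is then $O(|z|^{-3/4-3j/4-(k-j)})$, and the maximum (attained at $j=k$) gives the claimed bound. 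The exponentially decaying second term in \eqref{freelimiting} is smoother in $z$ and contributes strictly smaller order.

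Next I would establish invertibility and norm-continuity of $M^\pm(\lambda)$ on $L^2(\R^3)$ for every $\lambda\in[0,\infty)$. Since $v\lesssim(1+|x|)^{-\beta/2}$, the operator $vR_0^\pm(\lambda)v$ is compact on $L^2$, so the Fredholm alternative reduces invertibility to triviality of $\ker M^\pm(\lambda)$. Any $\phi\in\ker M^\pm(\lambda)$ yields $\psi=-R_0^\pm(\lambda)v\phi$ satisfying $(H-\lambda)\psi=0$ together with the outgoing/incoming radiation condition built into $R_0^\pm$; a regularity bootstrap and the absence of embedded eigenvalues force $\psi\equiv0$, hence $\phi=0$. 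For $|z|$ large, the Neumann series converges and gives $M(z)^{-1}=U+O(|z|^{-3/4})$, and norm-continuity of $M^\pm(\lambda)^{-1}$ across $[0,\infty)$ follows from norm-continuity of $R_0^\pm(\lambda)$ inherited from the Laplacian limiting absorption principle via \eqref{freelimiting}.

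Finally, for $k\geq1$ I would differentiate $R_V=R_0-R_0vM^{-1}vR_0$ by Leibniz, computing $(M^{-1})^{(k)}$ inductively from $(M^{-1})^{(k)}=-\sum_{j<k}\binom{k}{j}(M^{-1})^{(j)}M^{(k-j)}M^{-1}$, where $M^{(j)}=vR_0^{(j)}v$ is bounded on the appropriate spaces by pairing the $B(s,-s)$ bound on $R_0^{(j)}$ with the weight-exchange effected by the flanking $v$'s. Summing the products $R_0^{(k_1)}v(M^{-1})^{(k_2)}vR_0^{(k_3)}$ over $k_1+k_2+k_3=k$ yields the claimed $O(|z|^{-(3+3k)/4})$ bound in $B(s,-s)$. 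The main obstacle is twofold: the radiation-condition argument converting the absence of positive embedded eigenvalues into invertibility of $M^\pm(\lambda)$, and the careful distribution of the decay $v\lesssim(1+|x|)^{-\beta/2}$ between its two occurrences in $vR_0^{(j)}v$ at each derivative order, so that the hypothesis $\beta>k+1$ (rather than the naive $\beta>2k+1$) is enough to close all the weighted estimates.
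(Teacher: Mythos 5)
Your proposal follows essentially the same route as the paper's proof: high-energy decay of $R_0^{(k)}(z)$ from the splitting identity \eqref{freelimiting} and the known Laplacian bounds, invertibility of $M^{\pm}(\lambda)$ on $L^2$ via compactness of $vR_0^{\pm}(\lambda)v$, the Fredholm alternative and the absence of embedded eigenvalues, and then Leibniz differentiation of the symmetric resolvent identity with $\frac{d^{k_2}}{dz^{k_2}}[M^{-1}]$ expanded in terms of $M^{(\ell)}=vR_0^{(\ell)}v$. The one point you flag but do not resolve---closing the weighted estimates with only $\beta>k+1$---is not resolved in the paper either: Proposition~\ref{Perturbed Resolvent}, on which the theorem rests, assumes $\beta>2k+2$, and since each copy of $v$ flanking $R_0^{(\ell)}$ must absorb a weight of order $\ell+1/2$ there is no ``careful distribution'' that gets below roughly $\beta>2k+1$; so for $k\geq1$ the hypothesis as stated is weaker than what either your argument or the paper's actually uses.
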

\begin{proof}
The theorem follows by Proposition \ref{Perturbed Resolvent} and Lemma \ref{limitabsorp large} below.
\end{proof}

Using the high energy decay estimates for free resolvent $R_{0}(z)$ , we first prove estimate \eqref{extendhighenergy} for $z\in\Xi\setminus[V_0, \infty)$ by perturbation arguments. Then, we extend $z$ to the positive real axis applying the limiting absorption principle. For the free resolvent we have:

\begin{proposition}\label{freehighenergydecay}
For $z\in\mathbf{C}\setminus[0,+\infty)$, $k=0,1,2,\cdots$ and  any $s, s'>k+\frac{1}{2}$,  then
\begin{equation}\label{23}
  \big\|R_{0}^{(k)}(z)\big\|_{B(s, -s')}\lesssim |z|^{-(3+3k)/4},\,\,\,\, |z|\rightarrow \infty.
\end{equation}
\end{proposition}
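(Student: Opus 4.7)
The plan is to reduce the claim to the classical high-energy decay of the Laplacian resolvent via the splitting identity \eqref{freeresolventidentity}. Setting $\mu := \sqrt{1/4 + z}$ with the branch chosen so that $\mathrm{Im}\,\mu > 0$ on $\mathbf{C} \setminus [0, \infty)$, the identity reads
\begin{equation*}
R_{0}(z) = \frac{1}{2\mu}\Bigl[R_{\Delta}(\mu - \tfrac{1}{2}) - R_{\Delta}(-\mu - \tfrac{1}{2})\Bigr],
\end{equation*}
with $|\mu| \sim |z|^{1/2}$ as $|z| \to \infty$, and with both arguments $\pm\mu - 1/2$ uniformly bounded away from $[0, \infty)$. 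The input I would cite is the classical Agmon--Kuroda--Jensen--Kato bound for the 3D Laplacian resolvent,
\begin{equation*}
\|R_{\Delta}^{(j)}(w)\|_{B(s, -s')} \lesssim |w|^{-(j+1)/2}, \qquad s, s' > j + \tfrac{1}{2},
\end{equation*}
as $|w| \to \infty$ off the positive real axis; see e.g.\ \cite{JK, Agmon, BenArtzi-Devinatz-1987}. For $k = 0$, this immediately yields $\|R_{0}(z)\|_{B(s, -s')} \lesssim |\mu|^{-1} \cdot |\mu|^{-1/2} = |\mu|^{-3/2} \sim |z|^{-3/4}$, which is exactly the claim.

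For $k \geq 1$, I would differentiate the identity $k$ times. Leibniz gives
\begin{equation*}
R_{0}^{(k)}(z) = \sum_{j=0}^{k} \binom{k}{j} \Bigl[\tfrac{d^{k-j}}{dz^{k-j}} (2\mu)^{-1}\Bigr] \cdot \tfrac{d^{j}}{dz^{j}}\Bigl[R_{\Delta}(\mu - \tfrac{1}{2}) - R_{\Delta}(-\mu - \tfrac{1}{2})\Bigr].
\end{equation*}
Since $\mu$ is algebraic in $z$, an easy induction shows $\tfrac{d^{m}}{dz^{m}}(2\mu)^{-1} = O(|\mu|^{-(2m+1)})$ and more generally $\mu^{(m)} = O(|\mu|^{-(2m-1)})$ for $m \geq 1$. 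Applying Faà di Bruno to $\tfrac{d^{j}}{dz^{j}} R_{\Delta}(\pm \mu - 1/2)$ produces contributions indexed by partitions of $j$; the slowest-decaying contribution is the ``all singletons'' term $R_{\Delta}^{(j)}(\pm\mu - 1/2) (\mu')^{j}$, whose weighted norm is bounded by $|\mu|^{-(j+1)/2} \cdot |\mu|^{-j} = |\mu|^{-(3j+1)/2}$. Replacing any $m$ factors $\mu'$ by a single $\mu^{(m)}$ costs an additional $|\mu|^{-(m-1)}$, so every other partition decays strictly faster.

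Combining these ingredients, the $j$-th term in the Leibniz sum has operator norm at most a constant multiple of $|\mu|^{-(2(k-j)+1)} \cdot |\mu|^{-(3j+1)/2} = |\mu|^{-(4k - j + 3)/2}$, and this exponent is minimized (slowest decay) precisely when $j = k$, yielding $|\mu|^{-(3k+3)/2} = |z|^{-(3+3k)/4}$, as desired. The hypothesis $s, s' > k + 1/2$ is exactly what is needed to make the Laplacian bound applicable for every $j \leq k$ in the sum. The only genuine technical work is the Faà di Bruno bookkeeping to confirm that all non-leading partitions decay strictly faster than the leading one; this is a routine power-counting exercise, not a deep obstacle, so no step of the argument should be particularly hard.
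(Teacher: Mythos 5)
Your proposal is correct and follows essentially the same route as the paper's proof: the splitting identity \eqref{freeresolventidentity}, the classical weighted bound $\|R_{\Delta}^{(j)}(w)\|_{B(s,-s')}\lesssim |w|^{-(1+j)/2}$ for the Laplacian resolvent, and a Leibniz/chain-rule power count showing that the slowest-decaying contribution is the one in which all $k$ derivatives fall on the Laplacian resolvent factor, giving $|\mu|^{-(3k+3)/2}\sim|z|^{-(3+3k)/4}$ exactly as the paper obtains with its $\big(\tfrac{1}{\zeta}\tfrac{d}{d\zeta}\big)^{k_2}$ bookkeeping. (One minor imprecision: the argument $\mu-\tfrac12$ is \emph{not} uniformly bounded away from $[0,\infty)$ as $z$ approaches the positive real axis, but this is harmless because the cited Agmon--Jensen--Kato bound is uniform up to the cut by the limiting absorption principle.)
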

\begin{proof}
For free resolvent of Laplacian $R(-\Delta; z):=(-\Delta-z)^{-1}$, it is known that
\begin{equation}\label{22}
  \big\|R^{(k)}(-\Delta; z)\big\|_{B(s, -s')}\lesssim |z|^{-(1+k)/2},\quad |z|\rightarrow\infty
\end{equation}
holds, see e.g. \cite[Theorem 16.1]{KK}. For $k=0$, by \eqref{freeresolventidentity} and \eqref{22}, then
\begin{align*}
  \big\|R_{0}(z)\big\|_{B(s, -s')}
  & = \Big|\frac{1}{2\sqrt{1/4+z}}\Big|\bigg\|\Big(-\Delta+\frac{1}{2}-\sqrt{1/4+z}\Big)^{-1}-\Big(-\Delta+\frac{1}{2}+\sqrt{1/4+z}\Big)^{-1}\bigg\|_{B(s, -s')}\\
  & \lesssim \Big|\frac{1}{2\sqrt{1/4+z}}\Big|\Bigg(\bigg\|\Big(-\Delta+\frac{1}{2}-\sqrt{1/4+z}\Big)^{-1}\bigg\|_{B(s, -s')}+\bigg\|\Big(-\Delta+\frac{1}{2}+\sqrt{1/4+z}\Big)^{-1}\bigg\|_{B(s, -s')}\Bigg)\\
  & \lesssim \Big|\frac{1}{2\sqrt{1/4+z}}\Big|\Bigg(\Big(\sqrt{1/4+z}-1/2\Big)^{-1/2}+\Big(\sqrt{1/4+z}+1/2\Big)^{-1/2}\Bigg)\\
  & \lesssim |z|^{-3/4},\,\,\, |z|\rightarrow\infty.
\end{align*}
Now we check decay estimate \eqref{23} for $k\neq0$. For any $s, s'>1/2+k$, then
\begin{align*}
  \big\|R^{(k)}_{0}(z)\big\|_{B(s, -s')}
  & \lesssim \Big|\frac{d^{k_1}}{dz^{k_1}}\frac{1}{2\sqrt{1/4+z}}\Big|\Bigg\|\frac{d^{k_2}}{dz^{k_2}}\Big(-\Delta+\frac{1}{2}-\sqrt{1/4+z}\Big)^{-1}-\frac{d^{k_2}}{dz^{k_2}}\Big(-\Delta+\frac{1}{2}+\sqrt{1/4+z}\Big)^{-1}\Bigg\|_{B(s, -s')}\\
  & \lesssim \big|z\big|^{-1/2-k_1}\Bigg(\Bigg\|\frac{d^{k_2}}{dz^{k_2}}\Big(-\Delta+\frac{1}{2}-\sqrt{1/4+z}\Big)^{-1}\Bigg\|_{B(s, -s')}+\Bigg\|\frac{d^{k_2}}{dz^{k_2}}\Big(-\Delta+\frac{1}{2}+\sqrt{1/4+z}\Big)^{-1}\Bigg\|_{B(s, -s')}\Bigg)\\
  & \lesssim \big|z\big|^{-1/2-k_1}\Bigg(\Bigg\|\Big(\frac{1}{\zeta}\frac{d}{d\zeta}\Big)^{k_2}\Big(-\Delta+\frac{1}{2}-\zeta\Big)^{-1}\Bigg\|_{B(s, -s')}+\Bigg\|\Big(\frac{1}{\zeta}\frac{d}{d\zeta}\Big)^{k_2}\Big(-\Delta+\frac{1}{2}+\zeta\Big)^{-1}\Bigg\|_{B(s, -s')}\Bigg)\\
  & \lesssim |z|^{-(3+3k)/4},\,\,\, |z|\rightarrow\infty.
\end{align*}
where $\zeta=\sqrt{1/4+z}$ and $k=k_1+k_2$ ($k_1, k_2\geq0$).
\end{proof}

Now, we prove the high energy decay estimate of perturbed resolvent $R^{(k)}_{V}(z)$.
\begin{lemma}\label{Fredholm}
Let $V(x)$ satisfies that $|V(x)|\lesssim(1+|x|)^{-\beta}$ with some $\beta>2$. Then operators $vR_{0}(z)v\in B(0, 0)$ are compact for $z\in \mathbf{C}\setminus[0, +\infty)$. Moreover, $M(z)=U+vR_{0}(z)v$ are invertible in $L^{2}(\mathbf{R}^{3})$ for $z\in\mathbf{C}\setminus[V_0, +\infty)$.
\end{lemma}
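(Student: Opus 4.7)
The plan is to handle the two assertions separately: first compactness of $vR_{0}(z)v$ on $L^{2}$, and then invertibility of $M(z)$ via the Fredholm alternative.

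For compactness, I would exploit the splitting identity \eqref{freeresolventidentity}, which expresses $R_{0}(z)$ as the difference of two Laplacian resolvents $R_{-\Delta}(\sqrt{1/4+z}-1/2)$ and $R_{-\Delta}(-\sqrt{1/4+z}-1/2)$, each evaluated at points outside $[0,\infty)$ for $z\in\C\setminus[0,\infty)$. It is a classical fact (essentially Birman--Schwinger in 3D) that when $|V(x)|\lesssim(1+|x|)^{-\beta}$ with $\beta>2$, the operator $vR_{-\Delta}(w)v$ is Hilbert--Schmidt, hence compact, on $L^{2}(\R^{3})$ for $w\in\C\setminus[0,\infty)$: the kernel $\tfrac{e^{i\sqrt{w}|x-y|}}{4\pi|x-y|}v(x)v(y)$ is square integrable on $\R^{3}\times\R^{3}$ because $v^{2}\in L^{1}\cap L^{\infty}$ controls the $1/|x-y|$ singularity in 3D and the exponential (or polynomial) decay controls the tails. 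Adding the two compact pieces from \eqref{freeresolventidentity} gives compactness of $vR_{0}(z)v$ on $L^{2}$.

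For invertibility, I would use that $U$ is a bounded involution (unitary, in particular invertible on $L^{2}$), so $M(z)=U\bigl(I+UvR_{0}(z)v\bigr)$ is a compact perturbation of an invertible operator, hence Fredholm of index zero. By the Fredholm alternative it suffices to show $\ker M(z)=\{0\}$ for $z\in\C\setminus[V_{0},\infty)$. Suppose $M(z)\phi=0$ with $\phi\in L^{2}$, i.e. $\phi=-UvR_{0}(z)v\phi$. Set $\psi:=-R_{0}(z)v\phi$, which belongs to $L^{2}$ (in fact to the domain $\mathcal{D}(H_{0})$) since $R_{0}(z):L^{2}\to L^{2}$. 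Then $(H_{0}-z)\psi=v\phi$ and $\phi=Uv\psi$, so
\[
(H_{0}-z)\psi+V\psi=v\phi+Uv^{2}\psi=v\phi-v\phi=0,
\]
using $V=Uv^{2}$ and $\phi=Uv\psi$. Thus $(H-z)\psi=0$. Since $H-z>0$ on $\C\setminus[V_{0},\infty)$ as observed in the text, $z\in\rho(H)$ and therefore $\psi=0$. Then $v\phi=(H_{0}-z)\psi=0$ and $\phi=Uv\psi=0$, proving injectivity.

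The only step requiring care is verifying the Hilbert--Schmidt bound for $vR_{-\Delta}(w)v$ in a way that is uniform enough to give compactness for every $z\in\C\setminus[0,\infty)$; this is routine given $\beta>2$ but is the one place where the decay assumption on $V$ enters. The algebraic manipulations in the injectivity step are straightforward once one keeps careful track of the identity $V=Uv^{2}$ and the fact that $R_{0}(z)v\phi$ is automatically in the domain of $H_{0}$, justifying the application of $(H_{0}-z)$.
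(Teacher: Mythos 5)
Your proof follows essentially the same route as the paper's: compactness via the splitting identity \eqref{freeresolventidentity} and a Hilbert--Schmidt bound on $v(-\Delta-w)^{-1}v$, then invertibility via the Fredholm alternative combined with the fact that $(H-z)\psi=0$ has no nontrivial $L^2$ solution for $z\notin[V_0,\infty)$ (your injectivity computation is in fact spelled out more carefully than the paper's). One small correction to your parenthetical justification of the Hilbert--Schmidt bound: for $2<\beta\le 3$ one does \emph{not} have $v^{2}=|V|\in L^{1}(\mathbf{R}^{3})$; the finiteness of $\iint |v(x)|^{2}|x-y|^{-2}e^{-2\,\mathrm{Im}\sqrt{w}\,|x-y|}|v(y)|^{2}\,dx\,dy$ instead follows from $(1+|\cdot|)^{-\beta}\in L^{2}(\mathbf{R}^{3})$ (valid for $\beta>3/2$) via Cauchy--Schwarz in $x$ for each fixed $x-y$, together with the integrability of $|z|^{-2}e^{-2\,\mathrm{Im}\sqrt{w}\,|z|}$ over $\mathbf{R}^{3}$, which uses $\mathrm{Im}\sqrt{w}>0$ for $w\notin[0,\infty)$.
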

\begin{proof}
By the free resolvent identity \eqref{freeresolventidentity}, we only need to show  $v\big(-\Delta+\frac{1}{2}\pm\sqrt{1/4+z}~\big)^{-1}v$ are compact  in $L^{2}(\mathbf{R}^3)$. Recall that the kernel $k(\xi; x, y)$ of $\big(-\Delta-\xi\big)^{-1}$:
\begin{equation*}
	k(\xi; x, y)=e^{-i\sqrt{\xi}|x-y|}/(4\pi|x-y|),\,\,\, \rm{Im}\sqrt{\xi}>0.
\end{equation*}
Thus, under the assumption on $V$, one can check that the Hilbert-Schmidt norm
$$\int_{\mathbf{R}^3}\int_{\mathbf{R}^3}|v(x)k(\xi; x, y)v(y)|^2 dxdy<\infty.$$
Next, we show that $M(z)=U+vR_0(z)v$ is invertible by  Fredholm's alternative theorem.  We claim that $(H-z)\psi=0$ only has trivial solution in $L^{2}(\mathbf{R}^3)$.  In fact, for $z\in \mathbf{C}\setminus\mathbf{R}$, if $\psi\neq0$, then
		\begin{equation*}
			{\rm Im}\big((H-z)\psi, \psi\big)=-{\rm Im}\, z(\psi, \psi)\neq0.
		\end{equation*}
		Since $\big((H-z)\psi, \psi\big)=(H\psi, \psi)-z(\psi, \psi)$ and $(H\psi, \psi)\in\mathbf{R}$, hence $(H-z)\psi\neq0$ if $\psi\neq0$.  For $z\in\mathbf{C}\setminus\mathbf{R}$ and ${\rm Re} (z)< V_{0}$, then
		\begin{equation*}
		{\rm Re}\big((H-z)\psi, \psi\big)\geq {\rm Re} (V_{0}-z)(\psi, \psi)\neq0
		\end{equation*}
		provided $\psi\neq0$. Thus $(H-z)\psi\neq0$ if $\psi\neq0$.
		
		Let $\psi=Uv\phi$. Then  $M(z)\phi=0$ if and only if $(H-z)\psi=0$. Thus $M(z)\phi=0$ only has zero solution. Hence, Fredholm's alternative theorem tells that $M(z)=U+vR_0(z)v$ is invertible.
\end{proof}
\begin{proposition}\label{Perturbed Resolvent}
Let $k=0,1,2,\cdots$, and  $|V(x)|\lesssim(1+|x|)^{-\beta}$ with $\beta>2k+2$. For large $z\in\mathbf{C}\setminus[V_0, +\infty)$ and any $s, s'>k+1/2$, then
\begin{equation}\label{29}
  \big\|R^{(k)}_{V}(z)\big\|_{B(s, -s')}
  \leq C(\sigma, k)\big(|z|^{-(3+3k)/4}\big),\quad |z|\rightarrow\infty.
\end{equation}
\end{proposition}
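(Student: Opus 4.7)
The plan is to run a perturbative argument based on the symmetric resolvent identity
\[
R_V(z)=R_0(z)-R_0(z)\,v\,M(z)^{-1}\,v\,R_0(z),\qquad M(z)=U+vR_0(z)v,
\]
using Proposition \ref{freehighenergydecay} for the free resolvent together with Lemma \ref{Fredholm} for the invertibility of $M(z)$, and then to differentiate $k$ times in $z$ via the Leibniz rule.

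First, for the base case $k=0$: since $|v(x)|\lesssim (1+|x|)^{-\beta/2}$ with $\beta>2$, multiplication by $v$ maps $L^{2}_{-s}\to L^{2}$ and $L^{2}\to L^{2}_{s}$ for any $s\leq \beta/2$. Sandwiching Proposition \ref{freehighenergydecay} gives $\|vR_0(z)v\|_{L^{2}\to L^{2}}\lesssim |z|^{-3/4}\to 0$ as $|z|\to\infty$, so by a Neumann series argument (and Lemma \ref{Fredholm} for the qualitative invertibility) $\|M(z)^{-1}\|_{L^{2}\to L^{2}}\leq C$ uniformly for large $|z|$. Composing the maps $L^{2}_{s}\xrightarrow{R_0(z)}L^{2}_{-s}\xrightarrow{v}L^{2}\xrightarrow{M(z)^{-1}}L^{2}\xrightarrow{v}L^{2}_{s'}\xrightarrow{R_0(z)}L^{2}_{-s'}$ yields $\|R_0vM^{-1}vR_0\|_{B(s,-s')}\lesssim |z|^{-3/2}$, which is strictly better than the required $|z|^{-3/4}$ coming from $R_0(z)$ itself.

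For $k\geq 1$: differentiate $M(z)M(z)^{-1}=I$ repeatedly. This expresses $(M^{-1})^{(k)}(z)$ as a finite sum of products alternating $M(z)^{-1}$ and $M^{(j)}(z)=vR_0^{(j)}(z)v$ with $j\geq 1$ and the derivative orders summing to $k$. By Proposition \ref{freehighenergydecay} each factor $vR_0^{(j)}(z)v$ has $L^2\to L^2$ norm $\lesssim |z|^{-(3+3j)/4}$ (provided $\beta/2>j+\tfrac12$, which follows from $\beta>2k+2$ for all relevant $j\leq k$). Since the summands have at least one factor $M^{(j)}$, the slowest-decaying product (a single $vR_0^{(k)}(z)v$ sandwiched between two $M^{-1}$'s) gives
\[
\bigl\|(M^{-1})^{(k)}(z)\bigr\|_{L^{2}\to L^{2}}\lesssim |z|^{-(3+3k)/4}.
\]
Then apply the Leibniz rule to the identity above: $R_V^{(k)}(z)$ equals $R_0^{(k)}(z)$ minus a sum of terms $R_0^{(a)}(z)\,v\,(M^{-1})^{(b)}(z)\,v\,R_0^{(c)}(z)$ with $a+b+c=k$. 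Each perturbative term is controlled in $B(s,-s')$ by $|z|^{-3(1+a)/4}\cdot|z|^{-(3+3b)/4}\cdot|z|^{-3(1+c)/4}$ when $b\geq 1$, or by $|z|^{-3(1+a)/4}\cdot 1\cdot |z|^{-3(1+c)/4}$ when $b=0$; in every case the total exponent is at least $(6+3k)/4$, strictly better than $(3+3k)/4$. The leading contribution is therefore $R_0^{(k)}(z)$ itself, which satisfies the desired bound by Proposition \ref{freehighenergydecay}.

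The main technical nuisance — not a deep obstacle but the point that determines the hypothesis $\beta>2k+2$ — is bookkeeping the weights. For every occurrence of $R_0^{(j)}(z)$ one needs $s,s'>j+\tfrac12$ on its two sides, and for every $v$ flanking it one needs $\beta/2$ large enough to absorb the weight lost. Tracking these weights along the longest chain arising from $(M^{-1})^{(k)}$ and the two outer $R_0$ factors forces $\beta/2>k+\tfrac12$, and the small additional margin ensures the compositions land in $L^2$ in every intermediate slot.
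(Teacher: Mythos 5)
Your proposal is correct and follows essentially the same route as the paper: the symmetric resolvent identity, uniform invertibility of $M(z)=U+vR_0(z)v$ for large $|z|$ obtained from the smallness of $\|vR_0(z)v\|_{L^2\to L^2}\lesssim |z|^{-3/4}$, and a Leibniz-rule/induction argument for $k\geq 1$ in which every derivative falling on $M(z)^{-1}$ produces factors $vR_0^{(\ell)}(z)v$ controlled by Proposition \ref{freehighenergydecay} under the weight condition $\beta>2k+2$. Your bookkeeping of the decay exponents of the perturbative terms is slightly more explicit than the paper's, but the argument is the same.
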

\begin{proof}
For $k=0$, by  identity \eqref{symmetriresolventidentity} and Proposition \ref{freehighenergydecay}, the above bound \eqref{29} holds by the uniformly boundedness of $M(z)^{-1}$ for large $z\in\mathbf{C}\setminus[V_0, +\infty)$ in $L^{2}$.  It is equivalent to prove that for large $z\in\mathbf{C}\setminus[0,+\infty),$
		\begin{equation*}\label{eq-f}
			\big\|f\big\|_{L^2(\mathbf{R}^3)}\le C\big\|\big(U+vR_0(z)v\big)f\big\|_{L^2(\mathbf{R}^3)}.
		\end{equation*}
		In fact, by the triangle inequality we have
		\begin{equation*}\label{eq-triangle}
			\big|\|f\|_{L^2}-\|vR_0(z)vf\|_{L^2}\big|\le\big\|\big(U+vR_0(z)v\big)f\big\|_{L^2}\le \|f\|_{L^2}+\big\|vR_0(z)vf\big\|_{L^2}.
		\end{equation*}
		By the decay estimate \eqref{23}, for $|z|$ large enough, then
		\begin{equation*}
			\big\|vR_0(z)vf\big\|_{L^2}\le C(s, a)|z|^{-\frac{2m-1}{2m}}\|f\|_{L^2}\leq \frac{1}{4}\|f\|_{L^2}.
		\end{equation*}
		
		For $k\geq1$,  differentiating \eqref{symmetriresolventidentity} $k$-times in $z$, then
		\begin{equation}
			R^{(k)}_{V}(z)=R^{(k)}_{0}(z)-\sum\limits_{k_1+k_2+k_3=k}R^{(k_1)}_{0}(z)v\frac{d^{k_2}}{dz^{k_2}}\Big[M(z)^{-1}\Big]vR^{(k_3)}_{0}(z).
		\end{equation}
		Note that the derivative term $\frac{d^{k_2}}{dz^{k_2}}\big[M(z)^{-1}\big]$ is a linear combination of terms such as $\big[M(z)^{-1}\big]^{j}M^{(\ell)}(z)$ with $0\leq j, \ell< k_{2}$. By the representation of $M(z)$, we know $M^{(\ell)}(z)=vR^{(\ell)}_{0}(z)v$	for $\ell\geq 1$. Since $v(x)(1+|x|)^{\ell+1/2}\in L^{\infty}$ under the assumption of $V(x)$, thus \eqref{29} holds by mathematical induction.
\end{proof}

Now, we show the limiting absorption principle for $R_{V}(z)$.  For the resolvent of free Schr\"odinger operator $R(-\Delta; \zeta)$,  we have:
\begin{lemma}
( \cite[Theorem 8.1]{JK} ) Let $k=0,1,2,\cdots$. If $s>k+1/2$, then $R^{(k)}(-\Delta; \zeta)\in B(s, -s)$ is continuous in $\zeta \in\Xi\setminus\{0\}$. Further, the boundary value
\begin{equation*}
  R^{(k)}(-\Delta; \lambda\pm i0)=\lim_{\epsilon\downarrow0}R^{(k)}(-\Delta; \lambda\pm i\epsilon)\in B(s, -s)
\end{equation*}
exists for any $\lambda\in(0, +\infty)$. The decay estimate \eqref{22} can extend $\zeta\in\mathbf{C}\setminus[0, +\infty)$ to $\zeta\in\Xi\setminus\{0\}$.
\end{lemma}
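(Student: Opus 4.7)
The lemma is the classical Agmon--Jensen--Kato limiting absorption principle for the free Laplacian in $\mathbf{R}^3$, so my plan is to sketch the standard trace-lemma proof rather than invent something new. Start from the explicit kernel
\begin{equation*}
  k(\zeta;x,y)=\frac{e^{i\sqrt{\zeta}|x-y|}}{4\pi|x-y|},\qquad \mathrm{Im}\sqrt{\zeta}\geq 0,
\end{equation*}
which for $\zeta\in \mathbf{C}\setminus[0,\infty)$ is exponentially decaying in $|x-y|$, but on the two sheets of $\Xi$ that touch $(0,\infty)$ degenerates to the oscillatory kernels $e^{\pm i\sqrt{\lambda}|x-y|}/(4\pi|x-y|)$. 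I would \emph{define} the candidate boundary values $R(-\Delta;\lambda\pm i0)$ to be the integral operators with these oscillatory kernels, and then prove three things in order: (a) each is bounded $L^{2}_{s}\to L^{2}_{-s}$ for $s>1/2$ with norm $\lesssim |\lambda|^{-1/2}$; (b) $R(-\Delta;\zeta_n)\to R(-\Delta;\zeta)$ in $B(s,-s)$ whenever $\zeta_n\to \zeta$ in $\Xi\setminus\{0\}$; (c) the higher derivatives satisfy the analogous estimate with the stronger weight $s>k+1/2$ and decay rate $|\zeta|^{-(1+k)/2}$.

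The core tool for (a) is Agmon's trace lemma: the restriction map $f\mapsto \hat f|_{|\xi|=r}$ is bounded $L^{2}_{s}(\mathbf{R}^{3})\to L^{2}(S^{2}_{r})$ for $s>1/2$, with operator norm uniform in $r>0$ after extracting the factor $r^{1/2}$ from the surface measure. Combining this with the Sokhotski--Plemelj decomposition
\begin{equation*}
  \frac{1}{|\xi|^2-(\lambda\pm i\epsilon)}\;\longrightarrow\; \mathrm{p.v.}\frac{1}{|\xi|^2-\lambda}\pm i\pi\,\delta(|\xi|^2-\lambda)
\end{equation*}
realizes $R(-\Delta;\lambda\pm i0)$ as the sum of a principal-value operator (bounded uniformly in $\lambda$ on weighted $L^2$ by a Mellin/Hardy argument, or by Stein's analytic interpolation) and an explicit surface operator on the sphere of radius $\sqrt{\lambda}$. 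The surface operator has norm $\lesssim \sqrt{\lambda}$ by Agmon, and dividing by the $|\xi|^{2}-\lambda$ denominator Jacobian $\sim \sqrt{\lambda}$ produces the $|\lambda|^{-1/2}$ factor. Continuity in $\zeta$ (step (b)) is then obtained by dominated convergence on the kernels once one has the uniform pointwise bounds; the symmetric-difference $R(-\Delta;\zeta)-R(-\Delta;\zeta')$ is controlled by the same trace machinery applied to $k(\zeta;x,y)-k(\zeta';x,y)$.

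For (c), I would differentiate the kernel and observe that $\partial_{\zeta}^{k}k(\zeta;x,y)$ is a finite linear combination of terms of the form $c_{j,k}\,\zeta^{-j/2}|x-y|^{k-j}\,e^{i\sqrt{\zeta}|x-y|}/(4\pi|x-y|)$ for $0\leq j\leq k$. Each term is handled by the same trace argument applied to the auxiliary weight $|x-y|^{k-j}$; absorbing this polynomial weight into the $L^{2}_{s}\to L^{2}_{-s}$ bound is exactly why one must require $s>k+1/2$ at both ends. Tracking the powers of $\sqrt{\zeta}$ then yields $\|R^{(k)}(-\Delta;\zeta)\|_{B(s,-s)}=O(|\zeta|^{-(1+k)/2})$ uniformly on $\Xi\setminus\{0\}$, so the bound \eqref{22} indeed extends from $\mathbf{C}\setminus[0,\infty)$ up to the boundary.

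The main obstacle is that the naive $L^{2}\to L^{2}$ norm of the resolvent blows up as $\epsilon\downarrow 0$, so one genuinely needs weighted spaces and the sharp trace lemma on the sphere; proving the trace lemma is itself the crux and is usually done via stationary phase or by interpolating the Stein--Tomas restriction estimate with the trivial $L^{1}\to L^{\infty}$ bound. Once that ingredient is in hand, the remainder is careful bookkeeping of powers of $\sqrt{\zeta}$ and of the polynomial weights $|x-y|^{j}$ produced by differentiation.
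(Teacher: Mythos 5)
The paper offers no proof of this lemma: it is quoted verbatim from Jensen--Kato \cite[Theorem 8.1]{JK} (together with Agmon's limiting absorption principle), so there is no internal argument to compare yours against. Your sketch follows the standard route to that classical result --- explicit kernel, Agmon's trace lemma on spheres, Sokhotski--Plemelj splitting into a principal-value part and a surface-measure part, then differentiation of the kernel with the weight $s>k+1/2$ absorbing the polynomial factors $|x-y|^{j}$ --- and as an outline it is sound and is essentially the proof one finds in the literature.

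Two points need repair before the sketch becomes a proof. First, your formula for the differentiated kernel is off: since $\partial_{\zeta}e^{i\sqrt{\zeta}r}=\tfrac{ir}{2\sqrt{\zeta}}e^{i\sqrt{\zeta}r}$, the $k$-th derivative of $e^{i\sqrt{\zeta}r}/(4\pi r)$ is a combination of terms $c\,\zeta^{-a}r^{\,b}\,e^{i\sqrt{\zeta}r}/(4\pi r)$ with $1\leq b\leq k$ and $2a+b=2k$; in your parametrization $b=k-j$ this forces the power $\zeta^{-(k+j)/2}$, not $\zeta^{-j/2}$ (already for $k=1$ the derivative is $\tfrac{i}{2}\zeta^{-1/2}\,e^{i\sqrt{\zeta}r}/(4\pi)$, which matches neither of your two listed terms). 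The final rate $|\zeta|^{-(1+k)/2}$ is still correct, but only because the oscillatory operators with kernels $r^{\,b-1}e^{i\sqrt{\zeta}r}$ gain additional decay in $\zeta$ from the trace lemma; the bookkeeping you defer is precisely where this must be checked. Second, asserting that the principal-value piece is merely ``bounded uniformly in $\lambda$'' does not yield the decay $|\lambda|^{-1/2}$ for the full boundary value; one must run the scaling $\xi\mapsto\sqrt{\lambda}\,\xi$ (or an equivalent oscillatory-integral estimate) on that piece as well, as you implicitly acknowledge. With those corrections, and granting the trace lemma, the argument closes; for the purposes of this paper, however, the citation to \cite{JK} is the intended justification.
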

By the resolvent identity \eqref{freeresolventidentity}, for the free resolvent $R_{0}(z)$,  we have:
\begin{corollary}\label{freehighabsorp}
Let $k=0,1,2,\cdots$. For $s>k+1/2$, then $R^{(k)}_{0}(z)\in B(s, -s)$
is continuous in $z\in \Xi\setminus \{0\}$. Further, the boundary value
\begin{equation*}
  R^{(k)}_{0}(\lambda\pm i0)=\lim_{\epsilon\downarrow0}R^{(k)}_{0}(\lambda\pm i\epsilon)\in B(s, -s)
\end{equation*}
exists for any $\lambda\in(0,\, +\infty)$, and  the bound
\begin{equation}\label{freehigh}
   \big\|R^{(k)}_{0}(z)\big\|_{B(s,-s)}=O\big(|z|^{-(3+3k)/4}\big)
\end{equation}
holds as $z\rightarrow \infty$ in $\Xi\setminus \{0\}$.
\end{corollary}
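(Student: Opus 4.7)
The plan is to deduce Corollary~\ref{freehighabsorp} from the preceding lemma on $R^{(k)}(-\Delta;\zeta)$ via the splitting identity~\eqref{freeresolventidentity}. Setting $\zeta_{\pm}(z)=\pm\sqrt{1/4+z}-1/2$, formula~\eqref{freeresolventidentity} becomes
$$R_0(z)=\frac{1}{2\sqrt{1/4+z}}\bigl[R(-\Delta;\zeta_{+}(z))-R(-\Delta;\zeta_{-}(z))\bigr].$$
The first step is to check that $z\mapsto\zeta_{+}(z)$ is a continuous bijection from $\Xi\setminus\{0\}$ onto its image in $\Xi\setminus\{0\}$: indeed $\zeta_{+}$ vanishes only at $z=0$, and the sign-of-${\rm Im}$ observation recorded right after~\eqref{freeresolventidentity} ensures that upper/lower sheets are mapped to upper/lower sheets, with boundary value $\lambda\pm i0$ going to $(\sqrt{1/4+\lambda}-1/2)\pm i0$. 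The other branch $\zeta_{-}(z)$ stays uniformly bounded away from $[0,\infty)$ on $\Xi$, so $R^{(k)}(-\Delta;\zeta_{-}(z))$ is smooth in $z$ and bounded in $B(s,-s)$ with norm $O(|z|^{-(1+k)/2})$ at infinity.

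Continuity of $R_0^{(k)}(z)$ on $\Xi\setminus\{0\}$, and the existence of the boundary values $R_0^{(k)}(\lambda\pm i0)$ for $\lambda>0$, then follow by composing the continuity statement for $R^{(k)}(-\Delta;\cdot)$ on $\Xi\setminus\{0\}$ (granted by the preceding lemma) with the continuous branch maps $\zeta_{\pm}$, together with a Leibniz/chain-rule expansion of $\partial_z^k$ applied to~\eqref{freeresolventidentity}. Explicitly, $\partial_z^j R(-\Delta;\zeta_{\pm}(z))$ is a finite linear combination of $R^{(\ell)}(-\Delta;\zeta_{\pm}(z))$ with $\ell\le j$, multiplied by products of derivatives of $\sqrt{1/4+z}$, all of which are smooth on $\Xi\setminus\{0\}$.

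For the decay bound~\eqref{freehigh}, the same Leibniz expansion gives the exponent count: each derivative of $\sqrt{1/4+z}$ is $O(|z|^{-1/2})$, the prefactor $(2\sqrt{1/4+z})^{-1}$ contributes another $O(|z|^{-1/2})$, and the preceding lemma gives $\|R^{(\ell)}(-\Delta;\zeta_{+}(z))\|_{B(s,-s)}=O(|\zeta_{+}(z)|^{-(1+\ell)/2})=O(|z|^{-(1+\ell)/4})$. Summing exponents yields exactly $|z|^{-(3+3k)/4}$, which is~\eqref{freehigh}. The counting is identical to the one already carried out in Proposition~\ref{freehighenergydecay}; the only new input is that the preceding lemma permits $\zeta_{+}(z)$ to approach $[0,\infty)$ from either side, extending Proposition~\ref{freehighenergydecay} from $\mathbf{C}\setminus[0,\infty)$ to $\Xi\setminus\{0\}$. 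The only delicate point, more a bookkeeping issue than a real obstacle, is tracking the branch of $\sqrt{1/4+z}$ as $z$ crosses the continuous spectrum between the two sheets of $\Xi$; once this is handled, everything reduces mechanically to the already-established statements for the Laplacian resolvent.
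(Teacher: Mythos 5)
Your proposal is correct and follows exactly the route the paper intends: the corollary is deduced from the limiting absorption principle for $R^{(k)}(-\Delta;\zeta)$ via the splitting identity \eqref{freeresolventidentity}, with the decay exponent obtained by the same derivative counting already carried out in Proposition \ref{freehighenergydecay}. The paper states this corollary with essentially no proof, so your write-up simply supplies the (correct) details of the same argument.
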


Next, under the spectral assumption  that $H$ has no positive embedded eigenvalues, we  prove that the boundary value $R_{V}^{\pm}(\lambda)$ exists on $\lambda\in(0,\, +\infty)$. Recall that $V_0$ is the minimal value of potential function $V$. If $V_0\geq0$, then the segment $[V_0,\, 0]=\{0\}$.

\begin{lemma}\label{inverseabsorp}
	 Let $|V(x)|\lesssim (1+|x|)^{-\beta}$ with some $\beta>2$. For $\lambda\geq0$,   then $vR_0^{\pm}(\lambda)v\in B(0, 0)$  are compact.
\end{lemma}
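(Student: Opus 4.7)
The plan is to show that $vR_0^{\pm}(\lambda)v$ is in fact a Hilbert--Schmidt operator on $L^2(\mathbf{R}^3)$ uniformly for $\lambda\geq 0$, from which compactness is immediate. The strategy is to extract a simple pointwise bound on the free resolvent kernel, reduce to a Riesz potential inequality, and then invoke Hardy--Littlewood--Sobolev. This is cleaner than trying to extend the interior compactness from Lemma \ref{Fredholm} to the boundary, which would require operator-norm convergence of the limiting resolvents.

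First I would record a uniform kernel bound. With $\eta=(\sqrt{1/4+\lambda}-1/2)^{1/2}\geq 0$ for $\lambda\geq 0$, the explicit formula
\begin{equation*}
R_0^{\pm}(\lambda;x,y)=\frac{1}{1+2\eta^2}\Bigg[\frac{e^{\pm i\eta|x-y|}}{4\pi|x-y|}-\frac{e^{-\sqrt{1+\eta^2}|x-y|}}{4\pi|x-y|}\Bigg]
\end{equation*}
together with the trivial bounds $|1+2\eta^2|^{-1}\leq 1$ and $|e^{\pm i\eta r}|,\ e^{-\sqrt{1+\eta^2}r}\leq 1$ gives
\begin{equation*}
\bigl|R_0^{\pm}(\lambda;x,y)\bigr|\leq \frac{1}{2\pi|x-y|}
\end{equation*}
uniformly in $\lambda\geq 0$. (At $\lambda=0$ the bracket is $1-e^{-|x-y|}$, giving the same bound; at large $\lambda$ the prefactor only helps.)

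Next I would carry out the Hilbert--Schmidt computation. Squaring the kernel and integrating,
\begin{equation*}
\bigl\|vR_0^{\pm}(\lambda)v\bigr\|_{HS}^{2}\lesssim \int_{\mathbf{R}^3}\!\int_{\mathbf{R}^3}\frac{|V(x)|\,|V(y)|}{|x-y|^2}\,dx\,dy,
\end{equation*}
since $v^2=|V|$. By the Hardy--Littlewood--Sobolev inequality in $\mathbf{R}^3$ with Riesz kernel $|x-y|^{-2}$ and symmetric exponents $p=q=3/2$ (which satisfy $\tfrac{1}{p}+\tfrac{2}{3}+\tfrac{1}{q}=2$), the right-hand side is controlled by $C\|V\|_{L^{3/2}(\mathbf{R}^3)}^{2}$. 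The decay hypothesis $|V(x)|\lesssim (1+|x|)^{-\beta}$ with $\beta>2$ yields $V\in L^{3/2}(\mathbf{R}^3)$, because $\int_{\mathbf{R}^3}(1+|x|)^{-3\beta/2}\,dx<\infty$ precisely when $3\beta/2>3$. Hence $vR_0^{\pm}(\lambda)v$ is Hilbert--Schmidt, in particular compact, on $L^2(\mathbf{R}^3)$.

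No serious obstacle is anticipated; the content of the lemma sits entirely in matching the diagonal singularity $|x-y|^{-2}$ against the available decay of $V$ via HLS, and it is satisfying that this matching succeeds at exactly the stated threshold $\beta>2$. The only cautionary point is to verify that the kernel bound is genuinely uniform down to $\lambda=0$ (so that the argument applies at the threshold energy and not just for $\lambda>0$), but this is transparent from the explicit formula since $\eta\geq 0$ throughout.
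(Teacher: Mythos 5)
Your argument is correct and follows essentially the same route as the paper: bound the kernel uniformly by $(2\pi|x-y|)^{-1}$ for all $\lambda\geq 0$ and conclude that $vR_0^{\pm}(\lambda)v$ is Hilbert--Schmidt. The paper simply asserts that the Hilbert--Schmidt norm of $v(x)|x-y|^{-1}v(y)$ is finite under the decay assumption, whereas you verify this via Hardy--Littlewood--Sobolev with $p=q=3/2$, which is a clean way to see that the threshold is exactly $\beta>2$.
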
 	
\begin{proof}
	Recall the kernel of $R_{0}^{\pm}(\lambda)$:
	\begin{equation*}
		R_{0}^{\pm}(\lambda; x, y)=\frac{1}{1+2\eta^2}\Bigg(\frac{e^{\pm i\eta|x-y|}}{4\pi|x-y|}-\frac{e^{-\sqrt{1+\eta^2}|x-y|}}{4\pi|x-y|}\Bigg),
	\end{equation*}
	where $\eta=\Big(\sqrt{1/4+\lambda}-1/2\Big)^{1/2}$. Thus for $\lambda\geq0$, then $\Big|R_{0}^{\pm}(\lambda; x, y)\Big|\leq \frac{1}{2\pi|x-y|}$.
	Since the Hilbert-Schmidt norm of $v(x)|x-y|^{-1}v(y)$ is finite under the assumption of $V$, thus $vR_0^{\pm}(\lambda)v$ are compact.
\end{proof}

\begin{lemma}\label{limitabsorp large}
		For $H=\Delta^2-\Delta+V$, assume that $H$ has no positive embedded eigenvalue.
		\par (1) Let $|V(x)|\lesssim (1+|x|)^{-\beta}$ with some $\beta>2$. For
		$s, s'>1/2$, then $R_V(z)\in B(s, -s')$ is continuous for $z\in{\bf  \Xi}\setminus(\Sigma\cup\{0\})$. Furthermore, the boundary value
		\[R_V^{\pm}(\lambda)=\lim_{\epsilon\downarrow0}R_V(\lambda\pm i\epsilon)\in B(s, -s')\]
		exists for $\lambda\in \sigma_c(H)\setminus (\Sigma\cup\{0\})$.
		\par (2) Let $|V(x)|\lesssim (1+|x|)^{-\beta}$ with some $\beta>2$. Assume that $0$ is a regular point of $H$. For $s, s'>1$, then the function $R_V(z)\in B(s, -s')$ defined on $z\in{\bf  \Xi}\setminus\Sigma$ is continuous at $z=0$.
\end{lemma}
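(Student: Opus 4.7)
The plan is to prove both parts from the symmetric resolvent identity
\begin{equation*}
R_{V}(z)=R_{0}(z)-R_{0}(z)v\bigl(M(z)\bigr)^{-1}vR_{0}(z),\qquad M(z)=U+vR_{0}(z)v,
\end{equation*}
using the limiting absorption principle for the free resolvent already established in Corollary \ref{freehighabsorp}. Since $R_{0}(z)\in B(s,-s)$ is continuous on $\Xi\setminus\{0\}$ for $s>1/2$ and $v(x)\lesssim(1+|x|)^{-\beta/2}$ with $\beta>2$, composing with the multiplication operators $v$ (which maps $L^{2}_{-s}\to L^{2}_{s}$ as long as $\beta/2-s\ge s$, i.e.\ $s<\beta/2$) shows $vR_{0}(z)v\in B(0,0)$ is norm-continuous on $\Xi\setminus\{0\}$ and compact for each $z$ by Lemma \ref{inverseabsorp} (together with Lemma \ref{Fredholm} for $z$ off the positive axis).

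For part (1), continuity on $\Xi\setminus(\Sigma\cup[V_{0},0])$ off the positive real axis is already contained in Proposition \ref{Perturbed Resolvent} (inverses of $M(z)$ there come from Lemma \ref{Fredholm}). The real content is thus the boundary values $R_{V}^{\pm}(\lambda)$ for $\lambda\in\sigma_{c}(H)\setminus(\Sigma\cup\{0\})$. By the compact perturbation structure, $M^{\pm}(\lambda)=U+vR_{0}^{\pm}(\lambda)v$ is Fredholm of index zero, so invertibility is equivalent to triviality of its kernel. Here I would argue: if $\phi\in L^{2}$ satisfies $M^{\pm}(\lambda)\phi=0$, set $\psi=-R_{0}^{\pm}(\lambda)v\phi\in L^{2}_{-s}$ for $s>1/2$. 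A direct computation using $U\phi=v\psi$ and $V=vUv$ gives $(H-\lambda)\psi=0$ in the distributional sense. A standard bootstrap using the resolvent kernel bounds from \eqref{freekernel} and the decay of $V$ upgrades $\psi$ to $L^{2}$, producing an embedded eigenfunction at $\lambda$; the no-positive-embedded-eigenvalue hypothesis forces $\psi=0$, hence $\phi=-U v\psi =0$. Invertibility plus norm continuity of $\lambda\mapsto M^{\pm}(\lambda)$ yields norm continuity of $(M^{\pm}(\lambda))^{-1}$, and plugging back into the symmetric resolvent identity completes part (1), with the weights $s,s'>1/2$ coming from the boundedness of $R_{0}^{\pm}(\lambda):L^{2}_{s}\to L^{2}_{-s'}$.

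For part (2), the regularity of $0$ means $T_{0}=U+vG_{0}v$ is invertible on $L^{2}$. The expansions of Lemma \ref{freeresolventexpansion} show that, as $z\to 0$ in $\Xi\setminus\Sigma$, $R_{0}(z)\to G_{0}$ in $B(s,-s')$ provided $s,s'>1/2$ and $s+s'>2$, both of which are met for $s,s'>1$. Correspondingly $M(z)\to T_{0}$ in $B(0,0)$, and the continuity of inversion at an invertible point gives $\bigl(M(z)\bigr)^{-1}\to T_{0}^{-1}$ continuously. Substituting in the symmetric resolvent identity, and noting the sandwiching $v(\cdot)v$ absorbs the required extra decay thanks to $\beta>2$, we obtain continuity of $R_{V}(z)$ in $B(s,-s')$ at $z=0$ for any $s,s'>1$.

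The main obstacle is the step in part (1) that moves the kernel element $\phi$ of $M^{\pm}(\lambda)$ to an $L^{2}$ eigenfunction of $H$: the function $\psi=-R_{0}^{\pm}(\lambda)v\phi$ is only a priori in a weighted $L^{2}$ space, and establishing the integrability improvement requires a Agmon/Kato-type bootstrap exploiting the exponential factor in the kernel \eqref{freekernel} together with the polynomial decay of $V$. The analogous regularity-at-zero argument in part (2) is comparatively clean because invertibility of $T_{0}$ is built into the definition of the regular case, so no spectral exclusion is needed there beyond $\Sigma$.
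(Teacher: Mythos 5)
Your proposal follows essentially the same route as the paper: reduce everything to invertibility of $M^{\pm}(\lambda)=U+vR_{0}^{\pm}(\lambda)v$ via the symmetric resolvent identity, use compactness (Lemma \ref{inverseabsorp}) and the Fredholm alternative to reduce invertibility to triviality of the kernel, identify a kernel element with a distributional solution of $(H-\lambda)\psi=0$, and invoke the no-embedded-eigenvalue hypothesis (respectively, regularity of zero via invertibility of $T_{0}$ for part (2)). The only difference is that you explicitly flag, and correctly identify as an Agmon-type bootstrap, the step upgrading $\psi$ from weighted $L^{2}$ to $L^{2}$ so that the embedded-eigenvalue hypothesis applies --- a point the paper's proof passes over silently.
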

\begin{proof}
		The conclusions follow from Lemma \ref{inverseabsorp} and  symmetric resolvent identity \eqref{symmetriresolventidentity} provided
		\begin{equation*}
			\big[U+vR_{0}(\lambda\pm i\epsilon)v\big]^{-1}\rightarrow \big[U+vR_{0}(\lambda\pm i0)v\big]^{-1},\,\, \epsilon\downarrow 0.
		\end{equation*}
		The convergence holds if and only if both limit operators $U+vR_{0}(\lambda\pm i0)v: L^2(\mathbf{R}^3)\rightarrow L^2(\mathbf{R}^3) $ are invertible. According to Lemma \ref{inverseabsorp} and Fredholm's alternative theorem, it is enough to show that $\big[U+vR_{0}(\lambda\pm i0)v\big]\phi=0$ only admits zero solution in $L^{2}(\mathbf{R}^3)$. Note that $\big[U+vR_{0}(\lambda\pm i0)v\big]\phi=0$ equals $(H-\lambda)\psi=0$ with $\psi=Uv\phi$. Thus $\phi=0$ under the assumptions that zero is a regular point of $H$ and $H$ has no positive eigenvalue.
\end{proof}

\subsection{Proof of dispersive estimate for high energy}
To deal with the high energy part, we use the resolvent identity:
\begin{equation}\label{limitbornseries}
R_{V}^{\pm}(\lambda)=R_{0}^{\pm}(\lambda)-R_{0}^{\pm}(\lambda)VR_{0}^{\pm}(\lambda)+R_{0}^{\pm}(\lambda)VR_{V}^{\pm}(\lambda)VR_{0}^{\pm}(\lambda).
\end{equation}
From Proposition \ref{freedispersive}, we know that the first summand in \eqref{limitbornseries} satisfies the expected estimates. Therefore, it suffices to establish the expected bounds  for the last two summands in \eqref{limitbornseries}. Recall that $\lambda=\eta^4+\eta^2$. Denote
\begin{align*}
&\mathcal{R}_{0}^{\pm}(\eta; x, y):=\Big[R_{0}^{\pm}(\lambda)VR_{0}^{\pm}(\lambda)\Big](x, y)=R_{0}^{\pm}VR_{0}^{\pm}(\eta; x, y);\\
&\mathcal{R}_{V}^{\pm}(\eta; x, y):=\Big[R_{0}^{\pm}(\lambda)VR_{V}^{\pm}(\lambda)VR_{0}^{\pm}(\lambda)\Big](x, y)=R_{0}^{\pm}VR_{V}^{\pm}VR_{0}^{\pm}(\eta; x, y).
\end{align*}

For the term $\mathcal{R}_{0}^{\pm}(\eta; x, y)=R_{0}^{\pm}VR_{0}^{\pm}(\eta; x, y)$, we have:
\begin{proposition}\label{R0VR0}
	Let $|V(x)|\lesssim (1+|x|)^{-\beta}$ for some $\beta>3$.

For $t>1$, then
	\begin{equation*}
		\sup_{x, y\in\mathbf{R}^3}\Bigg|\int_{t^{-1/2}}^{\infty}e^{-it(\eta^4+\eta^2)}\Big[\mathcal{R}_{0}^{+}-\mathcal{R}_{0}^{-}\Big](\eta; x, y)~(4\eta^3+2\eta)~d\eta\Bigg|\lesssim |t|^{-3/2}.
	\end{equation*}
	
For $0<t\leq 1$, then
	\begin{equation*}
		\sup_{x, y\in\mathbf{R}^3}\Bigg|\int_{t^{-1/4}}^{\infty}e^{-it(\eta^4+\eta^2)}\mathcal{R}_{0}^{\pm}(\eta; x, y)~(4\eta^3+2\eta)~d\eta\Bigg|\lesssim |t|^{-3/4}.
	\end{equation*}
\end{proposition}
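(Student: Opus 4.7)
The plan is to adapt the strategy of Proposition \ref{freedispersive} to the first Born iterate $\mathcal{R}_0^\pm(\eta;x,y) = \int R_0^\pm(\eta;x,z)V(z)R_0^\pm(\eta;z,y)\,dz$, using pointwise bounds that hold uniformly in $x,y$ thanks to the polynomial decay of $V$. First I would record the three pointwise bounds on the kernel of $R_0^\pm$: $|R_0^\pm(\eta;x,y)|\lesssim (1+\eta^2)^{-1}|x-y|^{-1}$, $|(R_0^+-R_0^-)(\eta;x,y)|\lesssim \eta/(1+\eta^2)$, and the companion bound on $\partial_\eta R_0^\pm$ obtained in the proof of Proposition \ref{freedispersive}, which picks up one power of $\eta$ and possibly a $|x-y|^{-1}$ singularity. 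Combined with $|V(z)|\lesssim (1+|z|)^{-\beta}$ for $\beta>3$ and the elementary three-dimensional estimate $\sup_{x,y}\int |V(z)|\,|x-z|^{-1}|z-y|^{-1}\,dz<\infty$, these yield the key uniform amplitude bounds
\[
|\mathcal{R}_0^\pm(\eta;x,y)|\lesssim (1+\eta^2)^{-2},\qquad |\partial_\eta\mathcal{R}_0^\pm(\eta;x,y)|\lesssim (1+\eta)(1+\eta^2)^{-2},
\]
and, via the splitting $\mathcal{R}_0^+-\mathcal{R}_0^- = (R_0^+-R_0^-)VR_0^+ + R_0^-V(R_0^+-R_0^-)$, also $|\mathcal{R}_0^+-\mathcal{R}_0^-|\lesssim \eta(1+\eta^2)^{-2}$ with a corresponding derivative bound.

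For the small-time range $0<t\le 1$, I would integrate by parts once against the factor $(4\eta^3+2\eta)\,d\eta = d(\eta^4+\eta^2)$. The boundary contribution at $\eta=t^{-1/4}$ is $\lesssim t^{-1}(1+t^{-1/2})^{-2}\lesssim 1\le t^{-3/4}$, while the remaining derivative integral is $\lesssim t^{-1}\int_{t^{-1/4}}^\infty \eta^{-4}\,d\eta\lesssim t^{-1/4}\le t^{-3/4}$. The boundary at $\eta=\infty$ vanishes because of the $\eta^{-4}$ decay of the amplitude.

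For the large-time range $t>1$ the analogous integration by parts produces a boundary term at $\eta=t^{-1/2}$ of size $t^{-1}\cdot t^{-1/2}=t^{-3/2}$, using the gain $|\mathcal{R}_0^+-\mathcal{R}_0^-|\lesssim \eta$ near zero. The remaining integral
\[
\frac{1}{t}\int_{t^{-1/2}}^\infty e^{-it(\eta^4+\eta^2)}\,\partial_\eta(\mathcal{R}_0^+-\mathcal{R}_0^-)(\eta;x,y)\,d\eta
\]
only gives $O(t^{-1})$ by a naive amplitude estimate, so I would expand $\partial_\eta(\mathcal{R}_0^+-\mathcal{R}_0^-)$ into its explicit oscillatory pieces carrying phases $e^{\pm i\eta(|x-z|+|z-y|)}$ (together with the non-oscillatory pieces involving $e^{-\sqrt{1+\eta^2}|x-z|}$ which decay quickly in $\eta$ and are handled directly). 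Exchanging the $\eta$ and $z$ integrations by Fubini and applying van der Corput's lemma to the phase $\Phi(\eta)=-t(\eta^4+\eta^2)\pm\eta(|x-z|+|z-y|)$, whose second derivative satisfies $|\Phi''(\eta)|=12t\eta^2+2t\ge 2t$ uniformly in $x,y,z$, produces a uniform $t^{-1/2}$ gain on the inner oscillatory integral. Combined with the prefactor $t^{-1}$ this delivers the desired $t^{-3/2}$ bound.

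The main obstacle is the last step: isolating amplitudes of bounded variation in $\eta$ uniformly in $x,y,z$ so that van der Corput applies cleanly. In particular, the derivatives hitting $e^{\pm i\eta|x-z|}$ produce factors of $|x-z|$ that must be absorbed into the oscillatory phase (so that only $\eta$-derivatives of the smooth prefactors $(1+2\eta^2)^{-1}$ or $(1+2\eta^2)^{-2}$ remain as amplitudes), while the pieces containing $e^{-\sqrt{1+\eta^2}|a-b|}$ must be bounded separately using their exponential decay in $|a-b|$ to ensure uniform integrability against $V$. Once this bookkeeping is done, the second-derivative lower bound $|\Phi''|\gtrsim t$ is robust enough to yield the uniform $t^{-1/2}$ gain, completing the estimate.
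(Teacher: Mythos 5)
Your proposal is correct and follows essentially the same route as the paper's own proof: the same uniform kernel bounds and single integration by parts for both time regimes, the same splitting $\mathcal{R}_0^+-\mathcal{R}_0^-=(R_0^+-R_0^-)VR_0^+ + R_0^-V(R_0^+-R_0^-)$ to gain a factor of $\eta$ at the lower endpoint, and the same Fubini-plus-van der Corput argument (with $|\Phi''|\geq 2t$ after absorbing the linear phases $e^{\pm i\eta|x-z|}$) to extract the extra $t^{-1/2}$ from the derivative integral. The only blemish is a harmless arithmetic inconsistency in the small-time derivative integral (your own bound gives $\eta^{-3}$ decay, not $\eta^{-4}$, but the resulting $t^{-1/2}\leq t^{-3/4}$ still suffices).
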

\begin{proof}
By Lemma \ref{freeresolventuniformlyestimates}, then
	\begin{align*}
		\Big|\mathcal{R}_{0}^{\pm}(\eta; x, y)\Big|\lesssim \|V\|_{L^{1}(\mathbf{R}^3)}\Big(\sup_{x, y\in\mathbf{R}^3}\big|R_{0}^{\pm}(\lambda; x, y)\big|\Big)^2\lesssim \frac{1}{1+2\eta^2}
	\end{align*}
	and
	\begin{align*}
		&\Bigg|\frac{d}{d\eta}\mathcal{R}_{0}^{\pm}(\eta; x, y)\Bigg|\lesssim \|V\|_{L^{1}(\mathbf{R}^3)}\bigg(\sup_{x, y\in\mathbf{R}^3}\Big|\frac{d}{d\eta}R_{0}^{\pm}(\lambda; x, y)\Big|\bigg)\Big(\sup_{x, y\in\mathbf{R}^3}\big|R_{0}^{\pm}(\lambda; x, y)\big|\Big)\lesssim \frac{1+\eta}{(1+2\eta^2)^2}.
	\end{align*}
 For $0<t\leq1$, integrating by parts, then
 \begin{equation*}
   \begin{split}
 	&~\Bigg|\int_{t^{-1/4}}^{\infty}e^{-it(\eta^4+\eta^2)}\mathcal{R}_{0}^{\pm}(\eta; x, y)~(4\eta^3+2\eta)~d\eta\Bigg|\\
 	\lesssim &~ \frac{1}{|t|}\Bigg|e^{-it(\eta^4+\eta^2)}\mathcal{R}_{0}^{\pm}(\eta; x, y)\Big|_{t^{-1/4}}^{\infty}\Bigg|+\frac{1}{|t|}\int_{t^{-1/4}}^{\infty}\bigg|\frac{d}{d\eta}\mathcal{R}_{0}^{\pm}(\eta; x, y)\bigg|d\eta\\
 	\lesssim &~ \frac{1}{|t|(1+|t|^{-1/2})}+\frac{1}{|t|}\frac{1}{(1+|t|^{-1/4})^2}\lesssim |t|^{-3/4},\,\, 0<t\leq1. 	
 	\end{split}
 \end{equation*}
  For $t>1$,  we have:
 \begin{equation*}
 \begin{split}
 &~\Bigg|\int_{t^{-1/2}}^{\infty}e^{-it(\eta^4+\eta^2)}\Big[\mathcal{R}_{0}^{+}-\mathcal{R}_{0}^{-}\Big](\eta; x, y)~(4\eta^3+2\eta)~d\eta\Bigg|\\
 \lesssim&~\Bigg|\int_{t^{-1/2}}^{\infty}e^{-it(\eta^4+\eta^2)}\Big[\big(R_{0}^{+}-R_{0}^{-}\big)VR_{0}^{+}\Big](\eta; x, y)~d(\eta^4+\eta^2)\Bigg|\\
 &~+\Bigg|\int_{t^{-1/2}}^{\infty}e^{-it(\eta^4+\eta^2)}\Big[R_{0}^{-}V\big(R_{0}^{+}-R_{0}^{-}\big)\Big](\eta; x, y)~d(\eta^4+\eta^2)\Bigg|\\
 := &~ I(t)+II(t).
 \end{split}
 \end{equation*}
 For the first term $I(t)$, integrating by parts, then
 \begin{equation*}
 \begin{split}
 I(t)\lesssim&~\frac{1}{|t|}\Bigg|e^{-it(\eta^4+\eta^2)}\Big[\big(R_{0}^{+}-R_{0}^{-}\big)VR_{0}^{+}\Big](\eta; x, y)\Big|_{t^{-1/2}}^{\infty}\Bigg|+\frac{1}{|t|}\bigg|\int_{t^{-1/2}}^{\infty}e^{-it(\eta^4+\eta^2)}\frac{d}{d\eta}\Big[\big(R_{0}^{+}-R_{0}^{-}\big)VR_{0}^{+}\Big](\eta; x, y)d\eta\bigg|\\
 \lesssim&~\frac{1}{|t|}\Bigg|e^{-it(\eta^4+\eta^2)}\frac{1}{(1+2\eta^2)^2}\int_{\mathbf{R}^3}\Bigg(\frac{e^{i\eta|x-x_{1}|}-e^{-i\eta|x-x_{1}|}}{4\pi|x-x_{1}|}\Bigg)V(x_{1})\Bigg(\frac{e^{i\eta|y-x_1|}-e^{-\sqrt{1+\eta^2}|y-x_{1}|}}{4\pi|y-x_{1}|}\Bigg)dx_{1}\Big|_{t^{-1/2}}^{\infty}\Bigg|\\
 &~+\frac{1}{|t|}\bigg|\int_{t^{-1/2}}^{\infty}e^{-it(\eta^4+\eta^2)}\frac{d}{d\eta}\Big[\big(R_{0}^{+}-R_{0}^{-}\big)VR_{0}^{+}\Big](\eta; x, y)d\eta\bigg|\\
 \lesssim&~|t|^{-3/2}\Bigg|\frac{1}{(1+2\eta^2)^2}\int_{\mathbf{R}^3}\Bigg(\frac{e^{i\eta|x-x_{1}|}-e^{-i\eta|x-x_{1}|}}{4\pi\eta|x-x_{1}|}\Bigg)V(x_{1})\Bigg(\frac{e^{i\eta|y-x_1|}-e^{-\sqrt{1+\eta^2}|y-x_{1}|}}{4\pi|y-x_{1}|}\Bigg)dx_{1}\Big|_{\eta=t^{-1/2}}\Bigg|\\
&~+\frac{1}{|t|}\bigg|\int_{t^{-1/2}}^{\infty}e^{-it(\eta^4+\eta^2)}\frac{d}{d\eta}\Big[\big(R_{0}^{+}-R_{0}^{-}\big)VR_{0}^{+}\Big](\eta; x, y)d\eta\bigg|\\
\lesssim&~|t|^{-3/2}\|V\|_{L^{1}(\mathbf{R}^3)}+\frac{1}{|t|}\bigg|\int_{t^{-1/2}}^{\infty}e^{-it(\eta^4+\eta^2)}\frac{d}{d\eta}\Big[\big(R_{0}^{+}-R_{0}^{-}\big)VR_{0}^{+}\Big](\eta; x, y)d\eta\bigg|.
 \end{split}
 \end{equation*}
Now, we aim to show that
\begin{equation*}
\sup_{x, y\in\mathbf{R}^3}\bigg|\int_{t^{-1/2}}^{\infty}e^{-it(\eta^4+\eta^2)}\frac{d}{d\eta}\Big[\big(R_{0}^{+}-R_{0}^{-}\big)VR_{0}^{+}\Big](\eta; x, y)d\eta\bigg|\lesssim |t|^{-1/2}, \quad t>1.
\end{equation*}
Since
\begin{equation*}
\begin{split}
&\frac{d}{d\eta}\Big[\big(R_{0}^{+}-R_{0}^{-}\big)VR_{0}^{+}\Big](\eta; x, y)\\=&\frac{d}{d\eta}\Bigg[\frac{1}{(1+2\eta^2)^2}\int_{\mathbf{R}^3}\Bigg(\frac{e^{i\eta|x-x_{1}|}-e^{-i\eta|x-x_{1}|}}{4\pi|x-x_{1}|}\Bigg)V(x_{1})\Bigg(\frac{e^{i\eta|y-x_1|}-e^{-\sqrt{1+\eta^2}|y-x_{1}|}}{4\pi|y-x_{1}|}\Bigg)dx_{1}\Bigg]\\
=&~\frac{-8\eta}{(1+2\eta^2)^3}\int_{\mathbf{R}^3}\Bigg(\frac{e^{i\eta|x-x_{1}|}-e^{-i\eta|x-x_{1}|}}{4\pi|x-x_{1}|}\Bigg)V(x_{1})\Bigg(\frac{e^{i\eta|y-x_1|}-e^{-\sqrt{1+\eta^2}|y-x_{1}|}}{4\pi|y-x_{1}|}\Bigg)dx_{1}\\
&~+\frac{i}{(1+2\eta^2)^2}\int_{\mathbf{R}^3}\Bigg(\frac{e^{i\eta|x-x_{1}|}+e^{-i\eta|x-x_{1}|}}{4\pi}\Bigg)V(x_{1})\Bigg(\frac{e^{i\eta|y-x_1|}-e^{-\sqrt{1+\eta^2}|y-x_{1}|}}{4\pi|y-x_{1}|}\Bigg)dx_{1}\\
&~+\frac{1}{(1+2\eta^2)^2}\int_{\mathbf{R}^3}\Bigg(\frac{e^{i\eta|x-x_{1}|}-e^{-i\eta|x-x_{1}|}}{4\pi|x-x_{1}|}\Bigg)V(x_{1})\Bigg(\frac{ie^{i\eta|y-x_1|}+\eta(1+\eta^2)^{-1/2}e^{-\sqrt{1+\eta^2}|y-x_{1}|}}{4\pi}\Bigg)dx_{1}.
\end{split}
\end{equation*}
Furthermore, note that
\begin{equation*}
\sup_{x, y\in\mathbf{R}^3}\Bigg|\frac{e^{i\eta|x-x_{1}|}-e^{-i\eta|x-x_{1}|}}{4\pi|x-x_{1}|}\Bigg|\lesssim \eta; \quad \sup_{x, y\in\mathbf{R}^3}\Bigg|\frac{e^{i\eta|y-x_1|}-e^{-\sqrt{1+\eta^2}|y-x_{1}|}}{4\pi|y-x_{1}|}\Bigg| \lesssim \eta+\sqrt{1+\eta^2};
\end{equation*}
and
\begin{equation*}
\sup_{x, y\in\mathbf{R}^3}\Bigg|\frac{d}{d\eta}\frac{e^{i\eta|x-x_{1}|}-e^{-i\eta|x-x_{1}|}}{4\pi|x-x_{1}|}\Bigg|\lesssim 1; \quad \sup_{x, y\in\mathbf{R}^3}\Bigg|\frac{d}{d\eta}\frac{e^{i\eta|y-x_1|}-e^{-\sqrt{1+\eta^2}|y-x_{1}|}}{4\pi|y-x_{1}|}\Bigg| \lesssim 1+\eta(1+\eta^2)^{-1/2}.
\end{equation*}
By van der Corput's lemma, then
\begin{equation*}
\sup_{x, y\in\mathbf{R}^3}\bigg|\int_{t^{-1/2}}^{\infty}e^{-it(\eta^4+\eta^2)}\int_{\mathbf{R}^3}\frac{-8\eta}{(1+2\eta^2)^3}\Bigg(\frac{e^{i\eta|x-x_{1}|}-e^{-i\eta|x-x_{1}|}}{4\pi|x-x_{1}|}\Bigg)V(x_{1})\Bigg(\frac{e^{i\eta|y-x_1|}-e^{-\sqrt{1+\eta^2}|y-x_{1}|}}{4\pi|y-x_{1}|}\Bigg)dx_{1}d\eta\bigg|\lesssim |t|^{-1/2}.
\end{equation*}

For the remaining terms, similarly,  we have
\begin{equation*}
\begin{split}
&\bigg|\int_{t^{-1/2}}^{\infty}e^{-it(\eta^4+\eta^2)}\frac{i}{(1+2\eta^2)^2}\int_{\mathbf{R}^3}\Bigg(\frac{e^{i\eta|x-x_{1}|}+e^{-i\eta|x-x_{1}|}}{4\pi}\Bigg)V(x_{1})\Bigg(\frac{e^{i\eta|y-x_1|}-e^{-\sqrt{1+\eta^2}|y-x_{1}|}}{4\pi|y-x_{1}|}\Bigg)dx_{1}d\eta\bigg|\\
=~&\bigg|\int_{\mathbf{R}^3}\int_{t^{-1/2}}^{\infty}\Big(e^{-it(\eta^4+\eta^2-\frac{\eta|x-x_{1}|}{t})}+e^{-it(\eta^4+\eta^2+\frac{\eta|x-x_{1}|}{t})}\Big)\Bigg(\frac{e^{i\eta|y-x_1|}-e^{-\sqrt{1+\eta^2}|y-x_{1}|}}{16\pi^2(1+2\eta^2)^2|y-x_{1}|}\Bigg)d\eta V(x_{1})dx_{1}\bigg|
\lesssim~ |t|^{-1/2}.
\end{split}
\end{equation*}
Applying van der Corput's lemma again, we also obtain
\begin{equation*}
\begin{split}
&\bigg|\int_{t^{-1/2}}^{\infty}e^{-it(\eta^4+\eta^2)}\frac{1}{(1+2\eta^2)^2}\int_{\mathbf{R}^3}\Bigg(\frac{e^{i\eta|x-x_{1}|}-e^{-i\eta|x-x_{1}|}}{4\pi|x-x_{1}|}\Bigg)V(x_{1})\Bigg(\frac{ie^{i\eta|y-x_1|}+\eta(1+\eta^2)^{-1/2}e^{-\sqrt{1+\eta^2}|y-x_{1}|}}{4\pi}\Bigg)dx_{1}d\eta\bigg|\\
\lesssim~&\bigg|\int_{\mathbf{R}^3}\int_{t^{-1/2}}^{\infty}e^{-it(\eta^4+\eta^2-\eta|y-x_{1}|/t)}\Bigg(\frac{e^{i\eta|x-x_1|}-e^{-i\eta|x-x_{1}|}}{16\pi^2(1+2\eta^2)^2|x-x_{1}|}\Bigg)d\eta V(x_{1})dx_{1}\bigg|\\
&~+\bigg|\int_{t^{-1/2}}^{\infty}e^{-it(\eta^4+\eta^2)}\int_{\mathbf{R}^3}\frac{\eta(1+\eta^2)^{-1/2}}{16\pi^2 (1+2\eta^2)^2}\Bigg(\frac{e^{i\eta|x-x_1|}-e^{i\eta|x-x_{1}|}}{|x-x_{1}|}\Bigg)V(x_{1})e^{-\sqrt{1+\eta^2}|y-x_1|}dx_{1}d\eta\bigg|\\
\lesssim&~ |t|^{-1/2}.
\end{split}
\end{equation*}
Hence, we get $I(t)\lesssim |t|^{-3/2}$ for $t>1$. 	Applying  the similar arguments to $II(t)$ as for $I(t)$, we can obtain  $II(t)\lesssim |t|^{-3/2}$ for $t>1$.
 \end{proof}

For the last term $\mathcal{R}_{V}^{\pm}(\eta; x, y)=R_{0}^{\pm}VR_{V}^{\pm}VR_{0}^{\pm}(\eta; x, y)$, we will use the weighted-$L^2$ estimates of $R_{V}^{\pm}(\lambda)$. From the low energy asymptotic expansions Theorem \ref{Mexpansions} and the high energy decay estimates Theorem \ref{continuity}, $R_{V}^{\pm}(\eta^4+\eta^2)$ satisfy different estimates for $\eta$  small and large. Thus for the term $\mathcal{R}_{V}^{\pm}(\eta; x, y)$ with large time $t>1$, we split $t^{-1/2}<\eta<\infty$ into middle part $t^{-1/2}<\eta<t$ and large part $\eta>t$.

For small time $0<t\leq1$ and large time $t>1$ with high energy $\eta>t$, we have:
\begin{proposition}\label{highdecay}
	For $H=\Delta^2-\Delta+V$ with $|V(x)|\lesssim (1+|x|)^{-\beta}$ for some $\beta>3$. Assume that $H$ has no positive embedded eigenvalue.

For $t>1$, then
	\begin{equation*}\label{perturbedhighlarget}
		\sup_{x, y\in\mathbf{R}^3}\Bigg|\int_{t}^{\infty}e^{-it(\eta^4+\eta^2)}\Big[\mathcal{R}_{V}^{+}(\lambda; x, y)-\mathcal{R}_{V}^{-}(\lambda; x, y)\Big]~(4\eta^3+2\eta)~d\eta\Bigg|\lesssim |t|^{-3/2}.
	\end{equation*}
	
For $0<t\leq1$, then
	\begin{equation*}\label{perturbedhighsmallt}
		\sup_{x, y\in\mathbf{R}^3}\Bigg|\int_{t^{-1/4}}^{\infty}e^{-it(\eta^4+\eta^2)}\Big[\mathcal{R}_{V}^{+}(\lambda; x, y)-\mathcal{R}_{V}^{-}(\lambda; x, y)\Big]~(4\eta^3+2\eta)~d\eta\Bigg|\lesssim |t|^{-3/4}.
	\end{equation*}
\end{proposition}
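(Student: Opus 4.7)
My plan for Proposition~\ref{highdecay} is to establish the uniform pointwise bounds
\begin{equation*}
\sup_{x,y\in\mathbf{R}^3}\bigl|\mathcal{R}_V^{\pm}(\eta;x,y)\bigr| + \sup_{x,y\in\mathbf{R}^3}\bigl|\partial_\eta \mathcal{R}_V^{\pm}(\eta;x,y)\bigr| \lesssim \eta^{-5}
\end{equation*}
for $\eta\gtrsim 1$, and then conclude by a direct majorization (small $t$) and one integration by parts in $\eta$ (large $t$).

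To get the pointwise bounds I would rewrite the kernel as the bilinear pairing
\begin{equation*}
\mathcal{R}_V^{\pm}(\eta;x,y) = \bigl\langle R_V^{\pm}(\lambda)\,g_y^{\pm},\,\overline{h_x^{\pm}}\bigr\rangle,\qquad h_x^{\pm}(\cdot) = R_0^{\pm}(\lambda;x,\cdot)\,V(\cdot),\quad g_y^{\pm}(\cdot) = V(\cdot)\,R_0^{\pm}(\lambda;\cdot,y),
\end{equation*}
fix $s\in(3/2,\beta/2)$ (possible since $\beta>3$), and combine three ingredients. Lemma~\ref{freeresolventuniformlyestimates} supplies $|R_0^{\pm}(\lambda;x,y)|\lesssim\eta^{-1}$ and $|\partial_\eta R_0^{\pm}(\lambda;x,y)|\lesssim\eta^{-2}$ uniformly in $x,y$ for $\eta\gtrsim 1$, so together with $|V(x)|\lesssim(1+|x|)^{-\beta}$ and $\beta>s+3/2$ one gets $\|h_x^{\pm}\|_{L^2_s} + \|g_y^{\pm}\|_{L^2_s}\lesssim\eta^{-1}$ and analogously $\|\partial_\eta h_x^{\pm}\|_{L^2_s} + \|\partial_\eta g_y^{\pm}\|_{L^2_s}\lesssim\eta^{-2}$, both uniformly in $x,y$. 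Theorem~\ref{continuity} with $k=0,1$ gives $\|R_V^{\pm}(\lambda)\|_{B(s,-s)}\lesssim|\lambda|^{-3/4}\lesssim\eta^{-3}$ and, via the chain rule $\partial_\eta=(4\eta^3+2\eta)\partial_\lambda$, also $\|\partial_\eta R_V^{\pm}(\lambda)\|_{B(s,-s)}\lesssim\eta^3|\lambda|^{-3/2}\lesssim\eta^{-3}$. Cauchy--Schwarz in the pairing then yields $|\mathcal{R}_V^{\pm}|\lesssim\eta^{-5}$, and the product rule applied to $\partial_\eta[R_0^{\pm}VR_V^{\pm}VR_0^{\pm}]$ gives the matching $\eta^{-5}$ bound on $\partial_\eta\mathcal{R}_V^{\pm}$.

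For $0<t\leq 1$ a direct estimate suffices:
\begin{equation*}
\int_{t^{-1/4}}^\infty\bigl|\mathcal{R}_V^{\pm}(\eta;x,y)\bigr|\,(4\eta^3+2\eta)\,d\eta \lesssim \int_{t^{-1/4}}^\infty \eta^{-2}\,d\eta \lesssim t^{1/4} \lesssim t^{-3/4}.
\end{equation*}
For $t>1$ one integration by parts via $(4\eta^3+2\eta)e^{-it(\eta^4+\eta^2)}=\tfrac{i}{t}\partial_\eta e^{-it(\eta^4+\eta^2)}$ produces a boundary term bounded by $t^{-1}\cdot|\mathcal{R}_V^{\pm}(t;x,y)|\lesssim t^{-6}$ and an interior term bounded by $t^{-1}\int_t^\infty|\partial_\eta\mathcal{R}_V^{\pm}|\,d\eta\lesssim t^{-5}$. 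Both are much better than the required $t^{-3/2}$, and the bounds carry over to the difference $\mathcal{R}_V^{+}-\mathcal{R}_V^{-}$ by the triangle inequality.

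The main obstacle I foresee is the coupling of parameters: we need $s>3/2$ so that Theorem~\ref{continuity} applies to both $R_V^{\pm}$ and $\partial_\lambda R_V^{\pm}$ in $B(s,-s)$, while the pointwise/weighted bounds on $h_x^{\pm},g_y^{\pm}$ require $\beta>s+3/2$. These two constraints collapse to the borderline assumption $\beta>3$ of the proposition, so there is no slack in the bookkeeping of the weighted resolvent estimates; the argument also relies on the standing hypothesis of no positive embedded eigenvalue to invoke Theorem~\ref{continuity}.
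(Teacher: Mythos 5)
Your proposal is correct, and it rests on the same ingredients as the paper's proof: the uniform kernel bounds of Lemma \ref{freeresolventuniformlyestimates}, the high-energy weighted decay $\|R_V^{(k)}(\lambda)\|_{B(s,-s)}=O(|\lambda|^{-(3+3k)/4})$ from Theorem \ref{continuity}, and the embedding $L^\infty(\mathbf{R}^3)\subset L^2_{-s}(\mathbf{R}^3)$ to convert weighted-$L^2$ bounds into pointwise ones; your bookkeeping $s\in(3/2,\beta/2)$ against $\beta>3$ checks out, and your resulting bounds $|\mathcal{R}_V^{\pm}|+|\partial_\eta\mathcal{R}_V^{\pm}|\lesssim\eta^{-5}$ agree with the paper's intermediate estimates. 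The genuine difference is where you place the integration by parts, and in the large-time regime this is more than cosmetic: the paper majorizes directly, $\int_t^\infty(4\eta^3+2\eta)\,|\mathcal{R}_V^{\pm}|\,d\eta\lesssim\int_t^\infty\eta^{-2}\,d\eta$, but that integral is comparable to $t^{-1}$, not the asserted $t^{-3/2}$, so the paper's final inequality in that case does not close as written; your single integration by parts, giving a boundary term $\lesssim t^{-1}\cdot t^{-5}$ and an interior term $\lesssim t^{-1}\int_t^\infty\eta^{-5}\,d\eta\lesssim t^{-5}$, is exactly what repairs this and yields far more than is required. Conversely, for $0<t\leq1$ the paper integrates by parts while your direct majorization $\int_{t^{-1/4}}^\infty\eta^{-2}\,d\eta=t^{1/4}\leq t^{-3/4}$ is simpler and suffices. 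The only point you should make explicit is that Theorem \ref{continuity} supplies the decay only as $|\lambda|\to\infty$; on the residual compact range of $\eta$ of order one you need uniform boundedness of $R_V^{(k)}$ in $B(s,-s)$, which follows from the continuity statement of Lemma \ref{limitabsorp large} together with the absence of positive embedded eigenvalues (the paper is equally implicit on this).
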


\begin{proof}
 Note that $L^{\infty}(\mathbf{R}^3)\subset L^{2}_{-s}(\mathbf{R}^3)$ for any $s>3/2$. From Lemma \ref{freeresolventuniformlyestimates} and  Theorem \ref{continuity}, then
	\begin{equation*}
	    \begin{split}
		  \Big\|\mathcal{R}_{V}^{\pm}(\eta)\Big\|_{L^1\rightarrow L^{\infty}}
		  \leq\,\,\, & \big\|R_{0}^{\pm}(\lambda)\big\|_{L^{2}_{s}\rightarrow L^{\infty}}\big\|VR_{V}^{\pm}(\lambda)V\big\|_{L^{2}_{-s}\rightarrow L^{2}_{s}}\big\|R_{0}^{\pm}(\lambda)\big\|_{L^{1}\rightarrow L^{2}_{-s}}\\
		  \leq\,\,\, & \big\|R_{0}^{\pm}(\lambda)\big\|_{L^1\rightarrow L^{\infty}}\|V\|_{L^{2}_{-s}\rightarrow L^{2}_{s}}\big\|R_{V}^{\pm}(\lambda)\big\|_{L^{2}_{s}\rightarrow L^{2}_{-s}}\|V\|_{L^{2}_{-s}\rightarrow L^{2}_{s}}\big\|R_{0}^{\pm}(\lambda)\big\|_{L^{1}\rightarrow L^{\infty}}\\
		  \lesssim\,\,\, & \Bigg(\frac{\eta+\sqrt{1+\eta^2}}{1+2\eta^2}\Bigg)^2\Big(\eta^4+\eta^2\Big)^{-3/4}\\
		  \lesssim\,\,\, & \frac{1}{1+2\eta^2}\frac{\eta^{-3/2}}{(1+\eta^{2})^{3/4}}\lesssim \frac{1}{\eta^{3/2}(1+\eta^2)^{7/4}}\lesssim \eta^{-1}.
		\end{split}
	\end{equation*}
	Similarly, for $1<\eta$, we have
	\begin{equation*}
		\begin{split}
			\Big\|\frac{d}{d\eta}\mathcal{R}_{V}^{\pm}(\eta)\Big\|_{L^1\rightarrow L^{\infty}}
			\lesssim \,\,\, &\Big\|\frac{d}{d\eta}R_{0}^{\pm}(\lambda)\Big\|_{L^2_{s}\rightarrow L^{\infty}}\|V\|_{L^{2}_{-s}\rightarrow L^{2}_{s}}\big\|R_{V}^{\pm}(\lambda)\big\|_{L^{2}_{s}\rightarrow L^{2}_{-s}}\|V\|_{L^{2}_{-s}\rightarrow L^{2}_{s}}\big\|R_{0}^{\pm}(\lambda)\big\|_{L^{1}\rightarrow L^2_{-s}}\\
			&+\big\|R_{0}^{\pm}(\lambda)\big\|_{L^2_{s}\rightarrow L^{\infty}}\|V\|_{L^{2}_{-s}\rightarrow L^{2}_{s}}\big\|\frac{d\lambda}{d\eta}\frac{d}{d\lambda}R_{V}^{\pm}(\lambda)\big\|_{L^{2}_{s}\rightarrow L^{2}_{-s}}\|V\|_{L^{2}_{-s}\rightarrow L^{2}_{s}}\big\|R_{0}^{\pm}(\lambda)\big\|_{L^{1}\rightarrow L^{2}_{-s}}\\
			\lesssim \,\,\, & \Bigg(\frac{\eta(\eta+\sqrt{1+\eta^2})}{(1+2\eta^2)^2}+\frac{1+\eta(1+\eta^2)^{-1/2}}{1+2\eta^2}\Bigg)\Big(\eta^4+\eta^2\Big)^{-3/4}\frac{\eta+\sqrt{1+\eta^2}}{1+2\eta^2}\\
			&+\Bigg(\frac{\eta+\sqrt{1+\eta^2}}{1+2\eta^2}\Bigg)^2\Big(\eta^4+\eta^2\Big)^{-3/2}\Big(4\eta^3+2\eta\Big)\\
			\lesssim\,\,\, & \frac{1}{\eta^{3/2}(1+\eta^2)^{9/4}}+\frac{1}{\eta^2(1+\eta^2)^{3/2}}\lesssim \eta^{-2}.
		\end{split}
	\end{equation*}
	Integrating by parts,  for $0<t\leq1$ then
	\begin{equation*}
	\begin{split}
 		~\bigg|\int_{t^{-1/4}}^{\infty}e^{-it(\eta^4+\eta^2)}\mathcal{R}_{V}^{\pm}(\eta; x, y)d(\eta^4+\eta^2)\bigg|
 		\lesssim& ~ \frac{1}{|t|}\Bigg|e^{-it(\eta^4+\eta^2)}\mathcal{R}_{V}^{\pm}(\eta; x, y)\Big|_{t^{-1/4}}^{\infty}\Bigg|
 		+\frac{1}{|t|}\int_{t^{-1/4}}^{\infty}\bigg|\frac{d}{d\eta}\mathcal{R}_{V}^{\pm}(\eta; x, y)\bigg|d\eta\\
 		\lesssim &~ \frac{1}{|t|}\Big(|t|^{-1/4}\Big)^{-1}
 		+\frac{1}{|t|}|t|^{1/4}\lesssim|t|^{-3/4}.
 	\end{split}
 \end{equation*}
 For $t>1$, we have
 	\begin{equation*}
 \begin{split}
 \bigg|\int_{t}^{\infty}e^{-it(\eta^4+\eta^2)}\mathcal{R}_{V}^{\pm}(\eta; x, y)d(\eta^4+\eta^2)\bigg|
 \lesssim &~ \int_{t}^{\infty}\bigg|(4\eta^3+2\eta)\mathcal{R}_{V}^{\pm}(\eta; x, y)\bigg|d\eta\\
 \lesssim &~ \int_{t}^{\infty}\bigg|\frac{4\eta^3+2\eta}{\eta^{3/2}(1+\eta^2)^{7/4}}\bigg|d\eta\lesssim|t|^{-3/2}.
 \end{split}
 \end{equation*}
\end{proof}


For large time $t>1$ with middle energy $t^{-1/2}<\eta<t$, we have:
\begin{proposition}\label{middledecay}
	For $H=\Delta^2-\Delta+V$ with $|V(x)|\lesssim (1+|x|)^{-\beta}$ for some $\beta>0$. Assume that $H$ has no positive embedded eigenvalue.
	
	(1)~ If $0$ is a regular point of $H$, let $\beta>3$, then
	\begin{equation*}\label{middleregular}
	\sup_{x, y\in\mathbf{R}^3}\Bigg|\int_{t^{-1/2}}^{t}e^{-it(\eta^4+\eta^2)}\Big[\mathcal{R}_{V}^{+}(\lambda; x, y)-\mathcal{R}_{V}^{-}(\lambda; x, y)\Big]~(4\eta^3+2\eta)~d\eta\Bigg|\lesssim |t|^{-3/2},\,\, t>1.
	\end{equation*}
	
 	(2)~ If $0$ is a resonance of $H$, let $\beta>5$, then
 \begin{equation*}\label{middleregular}
 \sup_{x, y\in\mathbf{R}^3}\Bigg|\int_{t^{-1/2}}^{t}e^{-it(\eta^4+\eta^2)}\Big[\mathcal{R}_{V}^{+}(\lambda; x, y)-\mathcal{R}_{V}^{-}(\lambda; x, y)\Big]~(4\eta^3+2\eta)~d\eta\Bigg|\lesssim |t|^{-1/2},\,\, t>1.
 \end{equation*}

 	(3)~  If $0$ is a resonance and~/~or eigenvalue of $H$, let $\beta>7$, then
 \begin{equation*}\label{middleregular}
 \sup_{x, y\in\mathbf{R}^3}\Bigg|\int_{t^{-1/2}}^{t}e^{-it(\eta^4+\eta^2)}\Big[\mathcal{R}_{V}^{+}(\lambda; x, y)-\mathcal{R}_{V}^{-}(\lambda; x, y)\Big]~(4\eta^3+2\eta)~d\eta\Bigg|\lesssim |t|^{-1/2},\,\, t>1.
 \end{equation*}
\end{proposition}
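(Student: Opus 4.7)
The plan is to first reduce the bound on $\mathcal{R}_V^+-\mathcal{R}_V^-$ over the middle range to a bound on the main singular piece of the perturbed resolvent, and then split using the expansions of $(M^\pm(\lambda))^{-1}$ furnished by Theorem \ref{Mexpansions}. Starting from the two resolvent identities $R_V^\pm=R_0^\pm-R_0^\pm V R_V^\pm=R_0^\pm-R_V^\pm V R_0^\pm$, a short calculation gives the algebraic identity
\begin{equation*}
R_0^\pm V R_V^\pm V R_0^\pm=R_V^\pm-R_0^\pm+R_0^\pm V R_0^\pm,
\end{equation*}
so that $\mathcal{R}_V^+-\mathcal{R}_V^-=(R_V^+-R_V^-)-(R_0^+-R_0^-)+(R_0^+ V R_0^+-R_0^- V R_0^-)$. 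The last two summands, integrated against $e^{-it(\eta^4+\eta^2)}(4\eta^3+2\eta)$ over $[t^{-1/2},t]$, are bounded by $|t|^{-3/2}$ via arguments analogous to those in Proposition \ref{freedispersive} and Proposition \ref{R0VR0} applied to the subinterval. Substituting $R_V^\pm=R_0^\pm-R_0^\pm v(M^\pm)^{-1}v R_0^\pm$ and peeling off one more free-resolvent difference reduces everything to estimating
\begin{equation*}
\mathcal{I}(t;x,y):=\int_{t^{-1/2}}^{t} e^{-it(\eta^4+\eta^2)}\bigl[\Phi^+-\Phi^-\bigr](\eta;x,y)(4\eta^3+2\eta)\,d\eta,
\end{equation*}
where $\Phi^\pm:=R_0^\pm v(M^\pm)^{-1}v R_0^\pm$.

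Next I would invoke Theorem \ref{Mexpansions} to split $(M^\pm(\lambda))^{-1}=\Lambda_{\rm sing}^\pm(\eta)+\Lambda_{\rm reg}^\pm(\eta)$: $\Lambda_{\rm sing}^\pm\equiv 0$ in the regular case; $\Lambda_{\rm sing}^\pm=\mp\tfrac{4\pi i}{\|V\|_{L^1}}\eta^{-1}S_1(S_1 P S_1)^{-1}S_1$ in the resonance case; and $\Lambda_{\rm sing}^\pm=\eta^{-2}S_2(S_2 v G_2 v S_2)^{-1}S_2+\eta^{-1}A_{-1}^\pm$ in the eigenvalue case; while the remainder $\Lambda_{\rm reg}^\pm$ is bounded in $B(0,0)$ with controlled $\eta$-derivative. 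Writing $\mathcal{I}=\mathcal{I}_{\rm reg}+\mathcal{I}_{\rm sing}$ correspondingly, the regular piece $\mathcal{I}_{\rm reg}$ is handled by one integration by parts in $\lambda=\eta^4+\eta^2$: the lower boundary term at $\eta=t^{-1/2}$ is $O(|t|^{-1}\cdot\eta)=O(|t|^{-3/2})$, since both $R_0^+-R_0^-$ and $(M^+)^{-1}-(M^-)^{-1}$ start at order $\eta$ by Lemma \ref{freeresolventexpansion} and Theorem \ref{Mexpansions}; the upper boundary term at $\eta=t$ is much smaller by the high-energy decay in Theorem \ref{continuity} combined with Lemma \ref{freeresolventuniformlyestimates}; and the remaining oscillatory integral has amplitude $\tfrac{d}{d\eta}[\Phi_{\rm reg}^+-\Phi_{\rm reg}^-]$ to which van der Corput's lemma applies with the uniform bound $|\phi''(\eta)|=t(12\eta^2+2)\ge 2t$, producing an extra factor $|t|^{-1/2}$, for a total of $|t|^{-3/2}$. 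The singular piece $\mathcal{I}_{\rm sing}$, present only in cases (2) and (3), is treated directly: the weight $(4\eta^3+2\eta)$ cancels one or two powers of $\eta^{-1}$ in $\Lambda_{\rm sing}^\pm$, reducing the integrand to a finite sum of oscillatory integrals whose phases are of the form $-t(\eta^4+\eta^2)\pm\eta(|x-z_1|+|z_2-y|)$ coming from the kernel formula \eqref{freekernel} and whose amplitudes are bounded uniformly in $x,y$; van der Corput applied summand-by-summand then yields $|t|^{-1/2}$, which matches the stated bound.

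The main technical obstacle will be the estimate of $\mathcal{I}_{\rm reg}$, where achieving the sharp $|t|^{-3/2}$ requires verifying that $\tfrac{d}{d\eta}[\Phi_{\rm reg}^+-\Phi_{\rm reg}^-]$ supplies an amplitude in $L^\infty\cap W^{1,1}([t^{-1/2},t])$ uniformly in $x,y$ so that van der Corput applies. Differentiation produces terms involving $\tfrac{d}{d\eta}R_0^\pm$, $\tfrac{d}{d\eta}\Lambda_{\rm reg}^\pm$, and their mixed products; the oscillatory branches $e^{\pm i\eta|x-z|}$ and the exponentially damped branches $e^{-\sqrt{1+\eta^2}|x-z|}$ of the free kernel \eqref{freekernel} have to be tracked separately. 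The $O_1$-estimates built into Theorem \ref{Mexpansions} together with the decay hypotheses $\beta>3,5,7$ on $V$ in the three cases are precisely what this bookkeeping relies on to guarantee $L^\infty$ and $W^{1,1}$ control of the resulting amplitudes. Once these uniform estimates are secured, combining the regular and singular contributions with the preliminary reduction above yields Proposition \ref{middledecay} in all three spectral cases.
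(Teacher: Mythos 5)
Your proposal is correct in outline and lands on the same three pillars as the paper's proof: the splitting of $(M^\pm)^{-1}$ into singular and regular parts via Theorem \ref{Mexpansions}, one integration by parts in $\lambda=\eta^4+\eta^2$ for the regular piece followed by van der Corput on the derivative (this is exactly what Lemma \ref{end} does, including the need to absorb the undamped oscillatory branches $e^{\pm i\eta|x-z|}$ produced by $\tfrac{d}{d\eta}R_0^\pm$ into the phase), and a separate, cruder treatment of the singular piece. The two genuine differences are these. First, your reduction: the identity $R_0^\pm VR_V^\pm VR_0^\pm=R_V^\pm-R_0^\pm+R_0^\pm VR_0^\pm$ lets you pass directly to $R_V^+-R_V^-$ and then to $\Phi^\pm=R_0^\pm v(M^\pm)^{-1}vR_0^\pm$, whereas the paper keeps the Born-series form $(R_0^+-R_0^-)VR_V^\pm VR_0^+$ and $R_0^\pm V(R_V^+-R_V^-)VR_0^\pm$ and feeds in the weighted-$L^2$ bounds on $R_V^\pm$, $R_V^+-R_V^-$ and their $\eta$-derivatives collected in Lemma \ref{middleRV}; your version carries two rather than three or four free resolvents, so it has less $\eta$-decay to spend at the upper end of the interval, but a check of the sup-norm bounds in Lemma \ref{freeresolventuniformlyestimates} shows this is still sufficient. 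Second, the singular piece: you apply van der Corput directly, while the paper integrates by parts once (harvesting $|t|^{-1}$) and then estimates the derivative integral in absolute value, getting $|t|^{-1}$, $|t|^{-1}\ln|t|$ or $|t|^{-1}\cdot|t|^{1/2}$, all of which are $\lesssim|t|^{-1/2}$. Your route works, but only with the kernel-level bookkeeping you allude to: if you measure the amplitude with the operator-norm bounds $\|R_0^\pm\|_{L^1\to L^\infty}\lesssim(\eta+\sqrt{1+\eta^2})/(1+2\eta^2)$, the total variation of the amplitude over the long interval $[t^{-1/2},t]$ grows like $\ln t$ and van der Corput only returns $|t|^{-1/2}\ln|t|$; to get the clean $|t|^{-1/2}$ you must retain the $|x-z_1|^{-1}$, $|z_2-y|^{-1}$ factors of the kernel \eqref{freekernel} (integrable against $v$ uniformly in $x,y$) so that the $\eta$-derivative of the residual amplitude is genuinely in $L^1(0,\infty)$. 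The paper's integrate-by-parts-first route sidesteps this entirely, so you may find it simpler to adopt it for cases (2) and (3).
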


Since  $\|R_{V}^{\pm}(\lambda)\|_{B(s, -s)}$ is continuous for $\lambda>0$ with suitable $s>0$, thus $\|R_{V}^{\pm}(\eta^4+\eta^2)\|_{B(s, -s)}$ for $t^{-1/2}<\eta<t$  is controlled by $\|R_{V}^{\pm}(\eta^4+\eta^2)\|_{B(s, -s)}$ for $\eta$ near zero or $\eta$ approaches $\infty$.  However, the presence of zero resonance or eigenvalue affect the asymptotic propertity of $R_{V}^{\pm}(\lambda)$, see Theorem \ref{Mexpansions}. Thus for the middle energy $t^{-1/2}<\eta<t~(t>1)$, $R_{V}^{\pm}(\eta^4+\eta^2) $ satisfies the following estimates:
\begin{lemma}\label{middleRV}
For $H=\Delta^2-\Delta+V$ with $|V(x)|\lesssim (1+|x|)^{-\beta}$ for some $\beta>0$. Assume that $H$ has no positive embedded eigenvalue. Then
\begin{equation*}
\Big\|R_{V}^{\pm}(\eta^4+\eta^2)\Big\|_{B(s, -s)}\lesssim \begin{cases}
1,\,\, & \,\, 0~ \text{is regular};\\ \eta^{-1}\,\, & \,\, 0~ \text{is a resonance};\\  \eta^{-2},\,\, & \,\, 0~ \text{is a resonance and/or eigenvalue};
\end{cases}
\end{equation*}
and
\begin{equation*}
\Big\|\big(R_{V}^{+}-R_{V}^{-}\big)(\eta^4+\eta^2)\Big\|_{B(s, -s)}\lesssim \begin{cases}
\eta,\,\, & \,\, 0~ \text{is regular};\\ \eta^{-1}\,\, & \,\, 0~ \text{is a resonance};\\  \eta^{-1},\,\, & \,\, 0~ \text{is a resonance and/or eigenvalue};
\end{cases}
\end{equation*}
and
\begin{equation*}
\Big\|\frac{d}{d\eta}R_{V}^{\pm}(\eta^4+\eta^2)\Big\|_{B(s, -s)}\lesssim \begin{cases}
1,\,\, & \,\, 0~ \text{is regular};\\ \eta^{-2}\,\, & \,\, 0~ \text{is a resonance};\\  \eta^{-3},\,\, & \,\, 0~ \text{is a resonance and/or eigenvalue};
\end{cases}
\end{equation*}
and
\begin{equation*}
\Big\|\frac{d}{d\eta}\big(R_{V}^{+}-R_{V}^{-}\big)(\eta^4+\eta^2)\Big\|_{B(s, -s)}\lesssim \begin{cases}
1,\,\, & \,\, 0~ \text{is regular};\\ \eta^{-2}\,\, & \,\, 0~ \text{is a resonance};\\  \eta^{-2},\,\, & \,\, 0~ \text{is a resonance and/or eigenvalue}.
\end{cases}
\end{equation*}
Here we choose $\beta>3,\,\, s>3/2$ if 0 is a regular point, $\beta>5,\,\, s>5/2$ if 0 is a resonance and $\beta>7,\,\, s>7/2$ if 0 is a resonance and~/~or eigenvalue.
\end{lemma}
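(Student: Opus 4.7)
The plan is to combine the low-energy asymptotic expansions from Theorem \ref{Mexpansions} with the high-energy decay bounds from Theorem \ref{continuity}, using the symmetric resolvent identity \eqref{symmetriresolventidentity},
$$R_V^{\pm}(\lambda)=R_0^{\pm}(\lambda)-R_0^{\pm}(\lambda)\,v\,\bigl(M^{\pm}(\lambda)\bigr)^{-1}\,v\,R_0^{\pm}(\lambda).$$
Since the middle range $t^{-1/2}<\eta<t$ straddles both the low-energy regime $\eta\to 0$ and the high-energy regime $\eta\to\infty$, one needs both types of bounds and then glues them together via continuity of $R_V^{\pm}(\lambda)$ in $B(s,-s)$ away from $\lambda=0$, which is furnished by Lemma \ref{limitabsorp large}.

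First I would record uniform $B(s,-s)$ bounds for the free factors. From Lemma \ref{freeresolventexpansion}, the low-energy expansion $R_0^{\pm}(\eta^4+\eta^2)=G_0\pm i\eta G_1+O_1(\eta^2)$ yields $\|R_0^{\pm}\|_{B(s,-s)}=O(1)$ and, crucially, $\|R_0^+-R_0^-\|_{B(s,-s)}=O(\eta)$, with $\|\tfrac{d}{d\eta}R_0^{\pm}\|_{B(s,-s)}=O(1)$ and $\|\tfrac{d}{d\eta}(R_0^+-R_0^-)\|_{B(s,-s)}=O(1)$. For $\eta\gtrsim 1$ Proposition \ref{freehighenergydecay} provides even stronger polynomial decay. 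These feed into the identity once $(M^{\pm})^{-1}$ is under control.

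Next I would substitute the case-by-case expansions of $(M^{\pm}(\eta^4+\eta^2))^{-1}$ from Theorem \ref{Mexpansions} into the identity and read off the four displayed bounds. In the regular case $(M^{\pm})^{-1}=T_0^{-1}+O_1(\eta)$ is bounded, so $\|R_V^{\pm}\|_{B(s,-s)}\lesssim 1$ trivially; to get the $\eta$ bound on the difference one writes
$$R_V^+-R_V^-=(R_0^+-R_0^-)-(R_0^+-R_0^-)vT_0^{-1}vR_0^+-R_0^-vT_0^{-1}v(R_0^+-R_0^-)+O_1(\eta),$$
and every surviving term contains exactly one factor of $R_0^+-R_0^-=O(\eta)$. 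Derivative bounds follow by Leibniz's rule. In the resonance case $(M^{\pm})^{-1}=\mp i(4\pi/\|V\|_{L^1})S_1(S_1PS_1)^{-1}S_1\,\eta^{-1}+O_1(1)$, so $R_V^{\pm}$ inherits the $\eta^{-1}$ singularity; since the leading coefficient has \emph{opposite} signs for the two choices, $(M^+)^{-1}-(M^-)^{-1}$ still carries an $\eta^{-1}$ singular part, giving $\|R_V^+-R_V^-\|_{B(s,-s)}\lesssim\eta^{-1}$. Differentiating $\eta^{-1}$ produces $\eta^{-2}$, matching the derivative bounds. In the eigenvalue case $(M^{\pm})^{-1}=S_2(S_2vG_2vS_2)^{-1}S_2\,\eta^{-2}+\eta^{-1}A_{-1}^{\pm}+O_1(1)$; the leading coefficient is \emph{independent of sign}, so it cancels in $(M^+)^{-1}-(M^-)^{-1}$, leaving a leading $\eta^{-1}(A_{-1}^+-A_{-1}^-)$ contribution and hence $\|R_V^+-R_V^-\|_{B(s,-s)}\lesssim\eta^{-1}$, while $\|R_V^{\pm}\|_{B(s,-s)}\lesssim\eta^{-2}$. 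Derivatives shift each bound by one additional factor of $\eta^{-1}$.

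For $\eta$ away from zero (say $\eta\geq 1$) one may instead invoke Theorem \ref{continuity} directly, which yields $\|R_V^{(k)}(\eta^4+\eta^2)\|_{B(s,-s)}\lesssim\eta^{-3-3k}$ and is always dominated by the claimed bounds, so the lemma's estimates hold uniformly on the whole middle range. The principal obstacle is the bookkeeping in the eigenvalue case: one must verify that the exact equality of the $\eta^{-2}$ coefficients in $(M^+)^{-1}$ and $(M^-)^{-1}$ persists through the nested Schur-complement inversions of Theorem \ref{Mexpansions} (it does, because $S_2(S_2vG_2vS_2)^{-1}S_2$ enters only through a sign-independent Feshbach step on $M_2^{\pm}$), and that differentiating the $O_1(\eta)$ remainder does not produce anything worse than $O(1)$. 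Once these two points are checked, the four displayed bounds follow by collecting the worst surviving contribution in each spectral case.
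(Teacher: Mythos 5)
Your proposal is correct and follows essentially the same route as the paper: the symmetric resolvent identity combined with the case-by-case expansions of $\big(M^{\pm}\big)^{-1}$ from Theorem \ref{Mexpansions}, the uniform free-resolvent bounds, and the high-energy decay of Theorem \ref{continuity}, with the same two key cancellations (sign-independence of the $\eta^{-2}$ coefficient in the eigenvalue case, and the harmlessness of differentiating the $O_1(\eta)$ remainder). The only cosmetic difference is that the paper obtains the derivative bounds by writing $\tfrac{d}{d\eta}\big(M^{\pm}\big)^{-1}=\big(M^{\pm}\big)^{-1}v\big[\tfrac{d}{d\eta}R_0^{\pm}\big]v\big(M^{\pm}\big)^{-1}$ and verifying explicitly that the $\eta^{-4}$ coefficient vanishes via $S_2PS_2=0$, whereas you differentiate the expansion term by term, which is justified by the $O_1$ control and yields the same conclusion.
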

\begin{proof}
	Recall that
	\begin{equation*}
	R_{V}^{\pm}(\eta^4+\eta^2)=R_{0}^{\pm}(\eta^4+\eta^2)-R_{0}^{\pm}(\eta^4+\eta^2)v\Big(M^{\pm}(\eta^4+\eta^2)\Big)^{-1}vR_{0}^{\pm}(\eta^4+\eta^2)
	\end{equation*}
	where $v(x)=|V(x)|^{1/2}$. Thus
\begin{equation*}
  R_{V}^{+}-R_{V}^{-}=\Big(R_{0}^{+}-R_{0}^{-}\Big)-\Big(R_{0}^{+}v(M^{+})^{-1}vR_{0}^{+}-R_{0}^{-}v(M^{-})^{-1}vR_{0}^{-}\Big).
\end{equation*}
By the high energy decay estimates (Theorem \ref{continuity}), for large $\eta$, we get 
$$\big\|R_{V}^{\pm}(\eta^4+\eta^2)\big\|_{B(s,-s)}\lesssim |\eta|^{-3},\quad \Big\|\frac{d}{d\eta}R_{V}^{\pm}(\eta^4+\eta^2)\Big\|_{B(s,-s)}\lesssim |\eta|^{-3}.$$
Substituting the asymptotic expansion of $R_{0}^{\pm}$ and expansions of $\big(M^{\pm}(\eta^4+\eta^2)\big)^{-1}$ (Theorem \ref{Mexpansions}) into the above identities of $R_{V}^{\pm}$ and $R_{V}^{+}-R_{V}^{-}$, we get the desired estimates of $R_{V}^{\pm}$ and $R_{V}^{+}-R_{V}^{-}$.

For the estimates of the first derivative, since
	\begin{equation*}
	\begin{split}
	\frac{d}{d\eta}R_{V}^{\pm}(\eta^4+\eta^2)=&~\Big[\frac{d}{d\eta}R_{0}^{\pm}(\eta^4+\eta^2)\Big]-\Big[\frac{d}{d\eta}R_{0}^{\pm}(\eta^4+\eta^2)\Big]v\Big(M^{\pm}(\eta^4+\eta^2)\Big)^{-1}vR_{0}^{\pm}(\eta^4+\eta^2)\\
	&~-R_{0}^{\pm}(\eta^4+\eta^2)v\Big[\frac{d}{d\eta}\Big(M^{\pm}(\eta^4+\eta^2)\Big)^{-1}\Big]vR_{0}^{\pm}(\eta^4+\eta^2)\\
	&~-R_{0}^{\pm}(\eta^4+\eta^2)v\Big(M^{\pm}(\eta^4+\eta^2)\Big)^{-1}v\Big[\frac{d}{d\eta}R_{0}^{\pm}(\eta^4+\eta^2)\Big].
	\end{split}
	\end{equation*}
	Further, 
	\begin{equation*}
	\frac{d}{d\eta}\Big(M^{\pm}(\eta^4+\eta^2)\Big)^{-1}=\frac{d}{d\eta}\Big(U+vR_{0}^{\pm}(\eta^4+\eta^2)v\Big)^{-1}=\Big(M^{\pm}(\eta^4+\eta^2)\Big)^{-1}v\Big[\frac{d}{d\eta}R_{0}^{\pm}(\eta^4+\eta^2)\Big]v\Big(M^{\pm}(\eta^4+\eta^2)\Big)^{-1}.
	\end{equation*}
	Substituting asymptotic expansions of $\Big(M^{\pm}(\eta^4+\eta^2)\Big)^{-1}$ and $\frac{d}{d\eta}R_{0}^{\pm}(\eta^4+\eta^2)$ (see Lemma \ref{freeresolventexpansion} and Theorem \ref{Mexpansions})  into the preceding identity, then  we obtain the asymptotic expansion of $\frac{d}{d\eta}\Big(M^{\pm}(\eta^4+\eta^2)\Big)^{-1}$ as $\eta\rightarrow 0$.  Espescially, when 0 is a resonance and~/~or eigenvalue, then
	\begin{equation*}
	\begin{split}
	\frac{d}{d\eta}\Big(M^{\pm}(\eta^4+\eta^2)\Big)^{-1}=&~\Big(\eta^{-2}S_{2}(S_2vG_2vS_2)^{-1}S_{2}+\eta^{-1}A_{-1}^{\pm}+A_{0}^{\pm}+O(\eta)\Big)v\Big(\pm iG_{1}+2\eta G_2+O(\eta^2)\Big)v\\
	&~\cdot \Big(\eta^{-2}S_{2}(S_2vG_2vS_2)^{-1}S_{2}+\eta^{-1}A_{-1}^{\pm}+A_{0}^{\pm}+O(\eta)\Big).
	\end{split}
	\end{equation*}
	Recall that $PS_{2}=S_{2}P=0$. Thus the coefficient of the term of $\eta^{-4}$ is $$S_{2}(S_2vG_2vS_2)^{-1}S_{2}vG_1vS_{2}(S_2vG_2vS_2)^{-1}S_{2}=\frac{\|V\|_{L^1}}{4\pi}S_{2}(S_2vG_2vS_2)^{-1}S_{2}PS_{2}(S_2vG_2vS_2)^{-1}S_{2}=0.$$
Moreover, the coefficient of the term of $\eta^{-3}$ is 
$$2S_{2}(S_2vG_2vS_2)^{-1}S_{2}G_{2}S_{2}(S_2vG_2vS_2)^{-1}S_{2}$$
which is selfadjoint.	
By Lemma \ref{freeresolventuniformlyestimates}, then
	$$\sup_{\eta\geq0}\big\|R_{0}^{\pm}(\eta^4+\eta^2)\big\|_{B(s, -s)}\lesssim 1, \quad \sup_{\eta\geq0}\big\|\big(R_{0}^{+}-R_{0}^{-}\big)(\eta^4+\eta^2)\big\|_{B(s, -s)}\lesssim \eta;$$
and
$$\sup_{\eta\geq0}\Big\|\frac{d}{d\eta}R_{0}^{\pm}(\eta^4+\eta^2)\Big\|_{B(s, -s)}\lesssim 1, \quad \sup_{\eta\geq0}\Big\|\frac{d}{d\eta}\big(R_{0}^{+}-R_{0}^{-}\big)(\eta^4+\eta^2)\Big\|_{B(s, -s)}\lesssim 1.$$
	Note that $\Big(M^{\pm}(\eta^4+\eta^2)\Big)^{-1}$ exists in $L^{2}$. Hence the lemma holds by Theorem \ref{Mexpansions} and \ref{continuity}.
\end{proof}

Next, we need only to prove Proposition \ref{middledecay}.  
\begin{lemma}\label{end}
	If 0 is a regular point of $H$,  for $t>1$, then
	\begin{equation}\label{regular1}
	\sup_{x, y\in\mathbf{R}^3}\Bigg|\int_{t^{-1/2}}^{t}e^{-it(\eta^4+\eta^2)}\frac{d}{d\eta}\Big[\big(R_{0}^{+}-R_{0}^{-}\big)VR_{V}^{\pm}VR_{0}^{+}\Big](\eta; x, y)~d\eta\Bigg|\lesssim |t|^{-1/2}
	\end{equation}
and
\begin{equation}\label{regular2}
	\sup_{x, y\in\mathbf{R}^3}\Bigg|\int_{t^{-1/2}}^{t}e^{-it(\eta^4+\eta^2)}\frac{d}{d\eta}\Big[R_{0}^{\pm}V\big(R_{V}^{+}-R_{V}^{-}\big)VR_{0}^{\pm}\Big](\eta; x, y)~d\eta\Bigg|\lesssim |t|^{-1/2}.
	\end{equation}
\end{lemma}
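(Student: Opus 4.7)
My plan is to apply the Leibniz rule to the derivative inside the integrand and reduce Lemma~\ref{end} to a series of scalar oscillatory-integral estimates, each handled by van der Corput's lemma with the phase $\phi(\eta) = \eta^4 + \eta^2$, whose second derivative $\phi''(\eta) = 12\eta^2 + 2 \geq 2$ is uniformly bounded below on $(0,\infty)$. Expanding $\frac{d}{d\eta}[(R_0^+ - R_0^-)VR_V^{\pm}VR_0^+]$ by Leibniz produces three summands, depending on which of the three factors is differentiated. For each summand I will substitute the explicit kernel
\begin{equation*}
R_0^{\pm}(\eta^4+\eta^2; u, v) = \frac{1}{4\pi(1+2\eta^2)|u-v|}\Big(e^{\pm i\eta|u-v|} - e^{-\sqrt{1+\eta^2}|u-v|}\Big)
\end{equation*}
and its $\eta$-derivative from Lemma~\ref{freeresolventuniformlyestimates} for the two flanking free resolvents, then exchange the $\eta$- and spatial integrations via Fubini.

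After this substitution, the oscillatory factors $e^{\pm i\eta|x-x_1|}$ and $e^{\pm i\eta|x_2-y|}$ produced by the flanking free-resolvent kernels are absorbed into the exponential, yielding a modified phase $\Phi(\eta) = \eta^4 + \eta^2 + \alpha\eta/t$ with $\alpha = \alpha(x,x_1,x_2,y)\in\mathbf{R}$, whose second derivative $\Phi''(\eta) \geq 2$ remains uniform in $\alpha$; this is the same absorption device already used in the proof of Proposition~\ref{R0VR0}. The remaining scalar $\eta$-amplitude has the form $c(\eta)\,\langle G_x(\eta),\, \mathcal{K}(\eta^4+\eta^2)\,G_y(\eta)\rangle$, where $c(\eta)$ is a rational function coming from the $(1+2\eta^2)^{-k}$ factors (so that $c,c'\in L^1((0,\infty))$), the test functions $G_x(\eta)(x_1)=V(x_1)e^{\pm i\eta|x-x_1|}/|x-x_1|$ (and $G_y$ analogously) satisfy $\|G_x\|_{L^2_s}+\|\partial_\eta G_x\|_{L^2_s}\lesssim 1$ uniformly in $(x,\eta)$ under the decay assumption $\beta>3$, $s>3/2$, and $\mathcal{K}$ is one of $R_V^{\pm}$ or $\partial_\eta R_V^{\pm}$ as dictated by the Leibniz term. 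By Lemma~\ref{middleRV} in the regular case combined with the high-energy bound of Theorem~\ref{continuity}, we have $\|\mathcal{K}\|_{B(s,-s)}\lesssim 1$ and $\|\partial_\eta\mathcal{K}\|_{B(s,-s)}\lesssim \min(1,\eta^{-3})$, both of which integrate to $O(1)$ over $[t^{-1/2}, t]$. Scalar van der Corput then gives
\begin{equation*}
\Big|\int_{t^{-1/2}}^t e^{-it\Phi(\eta)}c(\eta)\langle G_x,\mathcal{K}G_y\rangle\, d\eta\Big| \lesssim |t|^{-1/2}
\end{equation*}
uniformly in $(x,y)$, which is the desired bound \eqref{regular1}.

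The main obstacle is the absence of any pointwise-in-$(x_1,x_2)$ control on the non-explicit kernel $R_V^{\pm}(\eta^4+\eta^2;x_1,x_2)$, which prevents a direct scalar van der Corput estimate carried out pointwise in the spatial variables. This is circumvented by the weighted $L^2$ duality $\langle G_x,\mathcal{K}G_y\rangle$, which reduces the control of the middle factor to the operator-norm bounds in $B(s,-s)$ already supplied by Lemma~\ref{middleRV}; the uniform $L^2_s$-boundedness of $G_x,G_y$ in turn relies on the potential $V$ supplying the polynomial weight decay inside the spatial integrations. The proof of \eqref{regular2} follows the identical strategy with $(R_V^+-R_V^-)$ replacing $R_V^{\pm}$ in the middle, now using the refined operator bounds $\|(R_V^+-R_V^-)\|_{B(s,-s)}\lesssim \min(\eta,\eta^{-3})$ and $\|\partial_\eta(R_V^+-R_V^-)\|_{B(s,-s)}\lesssim 1$ from Lemma~\ref{middleRV} and Theorem~\ref{continuity}, which again yield $O(1)$ total variation of the amplitude over $[t^{-1/2},t]$ and hence the same $|t|^{-1/2}$ conclusion.
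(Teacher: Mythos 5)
Your toolkit (Leibniz expansion, absorbing the outer oscillations $e^{\pm i\eta|x-x_1|}$ into the phase, van der Corput with $\phi''\geq 2$) is the right one, but there is a genuine gap in how you treat the middle factor, and it stems from skipping the symmetric resolvent identity \eqref{symmetriresolventidentity}. The two devices you invoke are mutually exclusive in the way you combine them. Once you absorb $e^{\pm i\eta|x-x_1|}$ into the phase, $\Phi(\eta)=\eta^4+\eta^2+\alpha\eta/t$ depends on $(x_1,x_2)$, so van der Corput must be applied to the $\eta$-integral at \emph{fixed} $(x_1,x_2)$ before the spatial integration; the resulting bound involves $\sup_\eta|\cdot|$ and $\int|\partial_\eta(\cdot)|\,d\eta$ of the integrand pointwise in $(x_1,x_2)$, which cannot be recovered from the $B(s,-s)$ norm of $R_V^{\pm}$ — exactly the obstruction you name, and the pairing $\langle G_x,\mathcal{K}G_y\rangle$ does not circumvent it because no $L^2$ duality can be performed under an $(x_1,x_2)$-dependent phase. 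If instead you keep the oscillations inside $G_x,G_y$ and use the original phase so that duality is legitimate, then: (a) for the Leibniz terms where $\frac{d}{d\eta}$ hits an outer free resolvent, $\partial_\eta G_x(\eta)(x_1)\sim |x-x_1|\,V(x_1)e^{\pm i\eta|x-x_1|}$ is not uniformly bounded in $L^2_s$; and (b) for the term $\big(R_0^+-R_0^-\big)V\big(\partial_\eta R_V^{\pm}\big)VR_0^+$, van der Corput requires $\int\|\partial_\eta^2R_V^{\pm}\|_{B(s,-s)}\,d\eta\lesssim1$, and $\partial_\eta^2 R_V^{\pm}$ contains $\partial_\eta^2R_0^{\pm}$, whose kernel grows like $|x-y|$ and lies in $B(s,-s)$ only for $s>5/2$ — incompatible with the window $3/2<s<\beta-3/2$ when $\beta>3$ is close to $3$. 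Your asserted bound $\|\partial_\eta\mathcal{K}\|_{B(s,-s)}\lesssim\min(1,\eta^{-3})$ for $\mathcal{K}=\partial_\eta R_V^{\pm}$ is therefore not supplied by Lemma \ref{middleRV} or Theorem \ref{continuity} and fails in the stated weight range.

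The paper resolves both problems by first inserting \eqref{symmetriresolventidentity}, reducing \eqref{regular1}--\eqref{regular2} to \eqref{i}--\eqref{iv}, so that the only non-explicit object is $(M^{\pm})^{-1}$ sandwiched between factors of $v$. The Leibniz terms in which the derivative lands on an interior factor ($\mathcal{R}_M,\mathcal{R}_2,\mathcal{R}_3$) retain the $1/|x-x_1|$ decay on the outside and are handled with the \emph{original} phase; the second derivative then falls on $(M^{\pm})^{-1}$, and $\frac{d^2}{d\eta^2}(M^{\pm})^{-1}$ is bounded on $L^2$ because $v(x)|x-y|v(y)$ is Hilbert--Schmidt already for $\beta>5/2$ — the weights $v$ sit exactly where the $|x-y|$ growth appears. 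Only the terms where the derivative lands on an outer resolvent ($\mathcal{R}_1,\mathcal{R}_4$) require phase absorption, and there the remaining integrand $\mathcal{M}$ is built from explicit free-resolvent kernels plus the kernel of $(M^{\pm})^{-1}$, so the pointwise-in-space van der Corput can actually be executed. Some such intermediate expansion (or another device that places the weights $v$ next to the $|x-y|$ growth and leaves only explicit kernels under the modified phase) is needed for the lemma to close at $\beta>3$; as written, your argument does not.
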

\begin{proof}
	Recall that
	\begin{equation*}	R_{V}^{\pm}=R_{0}^{\pm}-R_{0}^{\pm}v\Big(M^{\pm}(\eta^4+\eta^2)\Big)^{-1}vR_{0}^{\pm}.
	\end{equation*}
For estimate \eqref{regular1}, it is enough to show that
	\begin{equation}\label{i}
	\sup_{x, y\in\mathbf{R}^3}\Bigg|\int_{t^{-1/2}}^{t}e^{-it(\eta^4+\eta^2)}\frac{d}{d\eta}\Big[\big(R_{0}^{+}-R_{0}^{-}\big)VR_{0}^{\pm}VR_{0}^{+}\Big](\eta; x, y)~d\eta\Bigg|\lesssim |t|^{-1/2}
	\end{equation}
	and
	\begin{equation}\label{ii}
	\sup_{x, y\in\mathbf{R}^3}\Bigg|\int_{t^{-1/2}}^{t}e^{-it(\eta^4+\eta^2)}\frac{d}{d\eta}\Big[\big(R_{0}^{+}-R_{0}^{-}\big)VR_{0}^{\pm}v\Big(M^{\pm}\Big)^{-1}vR_{0}^{\pm}VR_{0}^{+}\Big](\eta; x, y)~d\eta\Bigg|\lesssim |t|^{-1/2}.
	\end{equation}
For estimate \eqref{regular2}, it is enough to show that
	\begin{equation}\label{iii}
	\sup_{x, y\in\mathbf{R}^3}\Bigg|\int_{t^{-1/2}}^{t}e^{-it(\eta^4+\eta^2)}\frac{d}{d\eta}\Big[R_{0}^{\pm}V\big(R_{0}^{+}-R_{0}^{-}\big)VR_{0}^{\pm}\Big](\eta; x, y)~d\eta\Bigg|\lesssim |t|^{-1/2}
	\end{equation}
	and
	\begin{equation}\label{iv}
	\sup_{x, y\in\mathbf{R}^3}\Bigg|\int_{t^{-1/2}}^{t}e^{-it(\eta^4+\eta^2)}\frac{d}{d\eta}\Big[R_{0}^{\pm}VR_{0}^{\pm}v\Big((M^{+})^{-1}-(M^{-})^{-1}\Big)vR_{0}^{\pm}VR_{0}^{\pm}\Big](\eta; x, y)~d\eta\Bigg|\lesssim |t|^{-1/2}.
	\end{equation}
Note that estimate \eqref{iii} holds by the similar argument as for estimate \eqref{i}. The estimate \eqref{iv} holds by the similar argument as for estimate \eqref{ii}.	

Applying  the similar arguments to $\frac{d}{d\eta}\Big[\big(R_{0}^{+}-R_{0}^{-}\big)VR_{0}^{\pm}VR_{0}^{+}\Big](\eta; x, y)$ as what we treat the term $\big(R_{0}^{+}-R_{0}^{-}\big)VR_{0}^{\pm}$  in the proof of Lemma \ref{R0VR0}, we can get estimate \eqref{i} holds. Next, we apply the van der Corput's lemma to show \eqref{ii} holds. Denotes
	\begin{align*}
	\mathcal{R}_{M}(\eta; x, y)&=\Big[\big(R_{0}^{+}-R_{0}^{-}\big)VR_{0}^{\pm}v\Big(\frac{d}{d\eta}\big(M^{\pm}\big)^{-1}\Big)vR_{0}^{\pm}VR_{0}^{+}\Big](\eta; x, y);\\
	\mathcal{R}_{1}(\eta; x, y)&=\Big[\Big(\frac{d}{d\eta}\big(R_{0}^{+}-R_{0}^{-}\big)\Big)VR_{0}^{\pm}v\big(M^{\pm}\big)^{-1}vR_{0}^{\pm}VR_{0}^{+}\Big](\eta; x, y);\\
	\mathcal{R}_{2}(\eta; x, y)&=\Big[\big(R_{0}^{+}-R_{0}^{-}\big)V\Big(\frac{d}{d\eta}R_{0}^{\pm}\Big)
	v\big(M^{\pm}\big)^{-1}vR_{0}^{\pm}VR_{0}^{+}\Big](\eta; x, y);\\
	\mathcal{R}_{3}(\eta; x, y)&=\Big[\big(R_{0}^{+}-R_{0}^{-}\big)VR_{0}^{\pm}v\big(M^{\pm}\big)^{-1}v\Big(\frac{d}{d\eta}R_{0}^{\pm}\Big)VR_{0}^{+}\Big](\eta; x, y);\\
	\mathcal{R}_{4}(\eta; x, y)&=\Big[\big(R_{0}^{+}-R_{0}^{-}\big)VR_{0}^{\pm}v\big(M^{\pm}\big)^{-1}vR_{0}^{\pm}V\Big(\frac{d}{d\eta}R_{0}^{+}\Big)\Big](\eta; x, y).
	\end{align*}
	By Lemma \ref{freeresolventuniformlyestimates} and  \ref{middleRV},  for $s>3/2$, then
	\begin{equation*}
	\begin{split}
	\sup_{x, y\in\mathbf{R}^3}\Big|\mathcal{R}_{M}(\eta; x, y)\Big|\lesssim&~ \big\|R_{0}^{+}-R_{0}^{-}\big\|_{L^{2}_{s}\rightarrow L^{\infty}}\big\|VR_{0}^{\pm}v\big\|_{L^{2}\rightarrow L^{2}_{s}}\Big\|\frac{d}{d\eta}\big(M^{\pm}\big)^{-1}\Big\|_{L^{2}\rightarrow L^{2}}\big\|vR_{0}^{\pm}V\big\|_{L^{2}_{-s}\rightarrow L^{2}}\big\|R_{0}^{+}\big\|_{L^{1}\rightarrow L^{2}_{-s}}\\
	\lesssim&~ \big\|R_{0}^{+}-R_{0}^{-}\big\|_{L^{1}\rightarrow L^{\infty}}\big\|(1+|\cdot|)^{s}VR_{0}^{\pm}v\big\|_{L^{2}\rightarrow L^{2}}\Big\|\frac{d}{d\eta}\big(M^{\pm}\big)^{-1}\Big\|_{L^{2}\rightarrow L^{2}}\big\|vR_{0}^{\pm}V(1+|\cdot|)^{s}\big\|_{L^{2}\rightarrow L^{2}}\big\|R_{0}^{+}\big\|_{L^{1}\rightarrow L^{\infty}}\\
	\lesssim &~\frac{\eta(1+\eta)}{(1+2\eta^2)^2}.
	\end{split}
	\end{equation*}
	Similarly, we have
	\begin{align*}
		&~\sup_{x, y\in\mathbf{R}^3}\Big|\mathcal{R}_{j}(\eta; x, y)\Big|\lesssim \frac{\eta(1+\eta)}{(1+2\eta^2)^2},\quad j=2,~ 3; \\
		&~\sup_{x, y\in\mathbf{R}^3}\Big|\mathcal{R}_{j}(\eta; x, y)\Big|\lesssim \frac{1+\eta}{(1+2\eta^2)^2},\quad j=1,~ 4.
	\end{align*}
	Hence $\sup_{\eta>0}\sup_{x, y\in\mathbf{R}^3}\Big|\mathcal{R}_{j}(\eta; x, y)\Big|\lesssim1$ for $j=M, 1, 2, 3, 4.$
   Furthermore, we have
   	\begin{equation*}
   	\begin{split}
   	\sup_{x, y\in\mathbf{R}^3}\Big|\frac{d}{d\eta}\mathcal{R}_{M}(\eta; x, y)\Big|\lesssim&~ \sum_{j=1}^{4}\sup_{x, y\in\mathbf{R}^3}\Big|\mathcal{R}_{j}(\eta; x, y)\Big|+\big\|R_{0}^{+}-R_{0}^{-}\big\|_{L^{1}\rightarrow L^{\infty}}\big\|(1+|\cdot|)^{s}VR_{0}^{\pm}v\big\|_{L^{2}\rightarrow L^{2}}\\
   	&~\times \Big\|\frac{d^2}{d\eta^2}\big(M^{\pm}\big)^{-1}\Big\|_{L^{2}\rightarrow L^{2}}\big\|vR_{0}^{\pm}V(1+|\cdot|)^{s}\big\|_{L^{2}\rightarrow L^{2}}\big\|R_{0}^{+}\big\|_{L^{1}\rightarrow L^{\infty}}\\
   	\lesssim &~\frac{\eta(1+\eta)}{(1+2\eta^2)^2}+\frac{1+\eta}{(1+2\eta^2)^2}\in L^{1}(\mathbf{R}).
   	\end{split}
   	\end{equation*}
	Here, since 0 is a regular point of $H$,  by Theorem \ref{Mexpansions}, then $\big\|\big(M^{\pm}\big)^{-1}\big\|_{L^{2}\rightarrow L^{2}}\lesssim 1$. Thus
	\begin{equation*}
	\begin{split}
	\Big\|\frac{d^2}{d\eta^2}\big(M^{\pm}\big)^{-1}\Big\|_{L^{2}\rightarrow L^{2}}\lesssim&~\Big\|\big(M^{\pm}\big)^{-1}v\Big(\frac{d^2}{d\eta^2}R_{0}^{\pm}\Big)v\big(M^{\pm}\big)^{-1}\Big\|_{L^{2}\rightarrow L^{2}}\\
	&~+	\Big\|\big(M^{\pm}\big)^{-1}v\Big(\frac{d}{d\eta}R_{0}^{\pm}\Big)v\big(M^{\pm}\big)^{-1}v\Big(\frac{d}{d\eta}R_{0}^{\pm}\Big)v\big(M^{\pm}\big)^{-1}\Big\|_{L^{2}\rightarrow L^{2}}\\
	\lesssim&~1+\Big\|\big(M^{\pm}\big)^{-1}\Big\|_{L^{2}\rightarrow L^{2}}\big\|v(x)|x-y|v(y)\big\|_{L^{2}(\mathbf{R}^3)\rightarrow L^{2}(\mathbf{R}^3)}\Big\|\big(M^{\pm}\big)^{-1}\Big\|_{L^{2}\rightarrow L^{2}}	\lesssim 1.
	\end{split}
	\end{equation*}
	For $\mathcal{R}_{j}(\eta; x, y),\,\, j=2, 3$, we also have
	\begin{equation*}
	\sup_{x, y\in\mathbf{R}^3}\Big|\frac{d}{d\eta}\mathcal{R}_{j}(\eta; x, y)\Big|\lesssim\frac{\eta(1+\eta)}{(1+2\eta^2)^2}+\frac{1+\eta}{(1+2\eta^2)^2}\in L^{1}(\mathbf{R}).
	\end{equation*}
	By the van der Corput's lemma, we obtain for $j=M, 2, 3$:
	\begin{equation*}
	\sup_{x, y\in\mathbf{R}^3}\Bigg|\int_{t^{-1/2}}^{t}e^{-it(\eta^4+\eta^2)}\mathcal{R}_{j}(\eta; x, y)~d\eta\Bigg|\lesssim |t|^{-1/2}.
	\end{equation*}
	
	For $\mathcal{R}_{1}(\eta; x, y)$, since
	\begin{equation*}
	\begin{split}
	\mathcal{R}_{1}(\eta; x, y)=&~\int_{\mathbf{R}^{12}}\frac{d}{d\eta}\big(R_{0}^{+}-R_{0}^{-}\big)(\eta; x, x_1)V(x_1)R_{0}^{\pm}(\eta; x_1, x_{2})v(x_2)\big(M^{\pm}\big)^{-1}(\eta; x_{2}, x_{3})v(x_3)\\
	&~\times R_{0}^{\pm}(\eta; x_3, x_4)V(x_4)R_{0}^{+}(\eta; x_4, y)dx_{1}dx_{2}dx_{3}dx_{4}\\
	=&~\int_{\mathbf{R}^{12}}\big(e^{i\eta|x-x_{1}|}+e^{-i\eta|x-x_1|}\big)V(x_1)R_{0}^{\pm}(\eta; x_1, x_{2})v(x_2)\big(M^{\pm}\big)^{-1}(\eta; x_{2}, x_{3})v(x_3)\\
	&~\times R_{0}^{\pm}(\eta; x_3, x_4)V(x_4)R_{0}^{+}(\eta; x_4, y)dx_{1}dx_{2}dx_{3}dx_{4}\\
	:=&~\int_{\mathbf{R}^{12}}\big(e^{i\eta|x-x_{1}|}+e^{-i\eta|x-x_1|}\big)\mathcal{M}(\eta; x_1, x_2, x_3, x_4, y)dx_{1}dx_{2}dx_{3}dx_{4}.
	\end{split}
	\end{equation*}
	Thus, we have
	\begin{equation*}
	\begin{split}
	\Bigg|\int_{t^{-1/2}}^{t}e^{-it(\eta^4+\eta^2)}\mathcal{R}_{1}(\eta; x, y)~d\eta\Bigg|=&~\Bigg|\int_{\mathbf{R}^{12}}\int_{t^{-1/2}}^{t}\Big(e^{-it(\eta^4+\eta^2-\eta|x-x_1|/t)}+e^{-it(\eta^4+\eta^2+\eta|x-x_1|/t)}\Big)\\
	&~\times \mathcal{M}(\eta; x_1, x_2, x_3, x_4, y)~d\eta dx_{1}dx_{2}dx_{3}dx_{4}\Bigg|
	\end{split}
	\end{equation*}
	Then apply the van der Corput's lemma to the two integrals on the right side, we obtain
	\begin{equation*}
	\sup_{x, y\in\mathbf{R}^3}\Bigg|\int_{t^{-1/2}}^{t}e^{-it(\eta^4+\eta^2)}\mathcal{R}_{1}(\eta; x, y)~d\eta\Bigg|\lesssim |t|^{-1/2}.
	\end{equation*}
	For $\mathcal{R}_{4}(\eta; x, y)$, the above estimate also holds by the similar processes as for $\mathcal{R}_{1}(\eta; x, y)$.
\end{proof}

\begin{proof}[\textbf{Proof of Proposition \ref{middledecay}}]
 It is enough to show  that  $\Big[\big(R_{0}^{+}-R_{0}^{-}\big)VR_{V}^{\pm}VR_{0}^{+}\Big](\eta; x, y)$ and $\Big[R_{0}^{\pm}V\big(R_{V}^{+}-R_{V}^{-}\big)VR_{0}^{\pm}\Big](\eta; x, y)$ satisfy the corresponding estimates, since $\Big[R_{0}^{-}VR_{V}^{\pm}V\big(R_{0}^{+}-R_{0}^{-}\big)\Big](\eta; x, y)$ holds by the same arguments as for $\Big[\big(R_{0}^{+}-R_{0}^{-}\big)VR_{V}^{\pm}VR_{0}^{+}\Big](\eta; x, y)$.

 For the term $\big(R_{0}^{+}-R_{0}^{-}\big)VR_{V}^{\pm}VR_{0}^{+}$, integrating by parts, then
 \begin{equation*}
 \begin{split}
 	&\Bigg|\int_{t^{-1/2}}^{t}e^{-it(\eta^4+\eta^2)}\Big[\big(R_{0}^{+}-R_{0}^{-}\big)VR_{V}^{\pm}VR_{0}^{+}\Big](\eta; x, y)~(4\eta^3+2\eta)~d\eta\Bigg|\\
 \lesssim&~|t|^{-2}+\frac{1}{|t|}\Bigg|\Big[\big(R_{0}^{+}-R_{0}^{-}\big)VR_{V}^{\pm}VR_{0}^{+}\Big](t^{-1/2}; x, y)\Bigg|
 	+ \frac{1}{|t|}\Bigg|\int_{t^{-1/2}}^{t}e^{-it(\eta^4+\eta^2)}\frac{d}{d\eta}\Big[\big(R_{0}^{+}-R_{0}^{-}\big)VR_{V}^{\pm}VR_{0}^{+}\Big](\eta; x, y)\Bigg|~d\eta.
 \end{split}
 \end{equation*}
 By Lemma \ref{freeresolventuniformlyestimates}, then
 \begin{equation*}
 \begin{split}
 \Big\|\Big[\big(R_{0}^{+}-R_{0}^{-}\big)VR_{V}^{\pm}VR_{0}^{+}\Big](\eta)\Big\|_{L^1\rightarrow L^{\infty}}
 \leq\,\,\, & \big\|R_{0}^{+}-R_{0}^{-}\big\|_{L^{2}_{s}\rightarrow L^{\infty}}\big\|VR_{V}^{\pm}V\big\|_{L^{2}_{-s}\rightarrow L^{2}_{s}}\big\|R_{0}^{\pm}\big\|_{L^{1}\rightarrow L^{2}_{-s}}\\
 \leq\,\,\, & \big\|R_{0}^{+}-R_{0}^{-}\big\|_{L^1\rightarrow L^{\infty}}\|V\|_{L^{2}_{-s}\rightarrow L^{2}_{s}}\big\|R_{V}^{\pm}\big\|_{L^{2}_{s}\rightarrow L^{2}_{-s}}\|V\|_{L^{2}_{-s}\rightarrow L^{2}_{s}}\big\|R_{0}^{\pm}\big\|_{L^{1}\rightarrow L^{\infty}}\\
 \lesssim\,\,\, & \frac{\eta\big(\eta+\sqrt{1+\eta^2}\big)}{(1+2\eta^2)^2}\big\|R_{V}^{\pm}\big\|_{L^{2}_{s}\rightarrow L^{2}_{-s}}.
 \end{split}
 \end{equation*}
Similarly, we have
	\begin{equation*}
\begin{split}
&\Big\|\frac{d}{d\eta}\Big[\big(R_{0}^{+}-R_{0}^{-}\big)VR_{V}^{\pm}VR_{0}^{+}\Big](\eta)\Big\|_{L^1\rightarrow L^{\infty}}\\
\lesssim \,\,\, &\Big\|\frac{d}{d\eta}\big(R_{0}^{+}-R_{0}^{-}\big)\Big\|_{L^2_{s}\rightarrow L^{\infty}}\|V\|_{L^{2}_{-s}\rightarrow L^{2}_{s}}\big\|R_{V}^{\pm}\big\|_{L^{2}_{s}\rightarrow L^{2}_{-s}}\|V\|_{L^{2}_{-s}\rightarrow L^{2}_{s}}\big\|R_{0}^{\pm}\big\|_{L^{1}\rightarrow L^2_{-s}}\\
&+\big\|R_{0}^{+}-R_{0}^{-}\big\|_{L^2_{s}\rightarrow L^{\infty}}\|V\|_{L^{2}_{-s}\rightarrow L^{2}_{s}}\Big\|\frac{d}{d\eta}R_{V}^{\pm}\Big\|_{L^{2}_{s}\rightarrow L^{2}_{-s}}\|V\|_{L^{2}_{-s}\rightarrow L^{2}_{s}}\big\|R_{0}^{\pm}\big\|_{L^{1}\rightarrow L^{2}_{-s}}\\
&+\big\|R_{0}^{+}-R_{0}^{-}\big\|_{L^2_{s}\rightarrow L^{\infty}}\|V\|_{L^{2}_{-s}\rightarrow L^{2}_{s}}\Big\|R_{V}^{\pm}\Big\|_{L^{2}_{s}\rightarrow L^{2}_{-s}}\|V\|_{L^{2}_{-s}\rightarrow L^{2}_{s}}\Big\|\frac{d}{d\eta}R_{0}^{\pm}\Big\|_{L^{1}\rightarrow L^{2}_{-s}}\\
\lesssim \,\,\, & \frac{\eta+\sqrt{1+\eta^2}}{(1+2\eta^2)^2}\big\|R_{V}^{\pm}\big\|_{L^{2}_{s}\rightarrow L^{2}_{-s}}+\frac{\eta\big(\eta+\sqrt{1+\eta^2}\big)}{(1+2\eta^2)^2}\Big\|\frac{d}{d\eta}R_{V}^{\pm}\Big\|_{L^{2}_{s}\rightarrow L^{2}_{-s}}+\frac{\eta}{(1+2\eta^2)^2}\big\|R_{V}^{\pm}\big\|_{L^{2}_{s}\rightarrow L^{2}_{-s}}.
\end{split}
\end{equation*}
 Thus by Lemma \ref{middleRV}, for $t>1$ we have
 \begin{equation*}
 \frac{1}{|t|}\Bigg|\Big[\big(R_{0}^{+}-R_{0}^{-}\big)VR_{V}^{\pm}VR_{0}^{+}\Big](t^{-1/2}; x, y)\Bigg|\lesssim \begin{cases}
 |t|^{-3/2},\,\, & \,\, 0~ \text{is regular};\\ |t|^{-1},\,\, & \,\, 0~ \text{is a resonance};\\  |t|^{-1/2},\,\, & \,\, 0~ \text{is a resonance and/or eigenvalue}.
 \end{cases}
 \end{equation*}
When 0 is a resonance, then
\begin{equation*}
\frac{1}{|t|}\int_{t^{-1/2}}^{t}\Bigg|\frac{d}{d\eta}\Big[\big(R_{0}^{+}-R_{0}^{-}\big)VR_{V}^{\pm}VR_{0}^{+}\Big](\eta; x, y)\Bigg|~d\eta\lesssim \frac{1}{|t|}\int_{t^{-1/2}}^{t} \frac{\eta+\sqrt{1+\eta^2}}{(1+2\eta^2)^2}\eta^{-1}~d\eta\lesssim |t|^{-1}\ln|t|,\,\,\, t>1.
\end{equation*}
When 0 is a resonance and~/~or an eigenvalue, then
\begin{equation*}
\frac{1}{|t|}\int_{t^{-1/2}}^{t}\Bigg|\frac{d}{d\eta}\Big[\big(R_{0}^{+}-R_{0}^{-}\big)VR_{V}^{\pm}VR_{0}^{+}\Big](\eta; x, y)\Bigg|~d\eta\lesssim \frac{1}{|t|}\int_{t^{-1/2}}^{t} \eta^{-2}~d\eta\lesssim |t|^{-1/2},\,\,\, t>1.
\end{equation*}
However, when 0 is a regular point, we obtain the desired estimate by Lemma \ref{end}.

For the term $R_{0}^{\pm}V\big(R_{V}^{+}-R_{V}^{-}\big)VR_{0}^{\pm}$, integrating by parts, then
 \begin{equation*}
 \begin{split}
 	&\Bigg|\int_{t^{-1/2}}^{t}e^{-it(\eta^4+\eta^2)}\Big[R_{0}^{\pm}V\big(R_{V}^{+}-R_{V}^{-}\big)VR_{0}^{\pm}\Big](\eta; x, y)~(4\eta^3+2\eta)~d\eta\Bigg|\\
 \lesssim&~|t|^{-2}+\frac{1}{|t|}\Bigg|\Big[R_{0}^{\pm}V\big(R_{V}^{+}-R_{V}^{-}\big)VR_{0}^{\pm}\Big](t^{-1/2}; x, y)\Bigg|
 	+ \frac{1}{|t|}\Bigg|\int_{t^{-1/2}}^{t}e^{-it(\eta^4+\eta^2)}\frac{d}{d\eta}\Big[R_{0}^{\pm}V\big(R_{V}^{+}-R_{V}^{-}\big)VR_{0}^{\pm}\Big](\eta; x, y)\Bigg|~d\eta.
 \end{split}
 \end{equation*}
 By Lemma \ref{middleRV}, for $t>1$ we have
 \begin{equation*}
 \frac{1}{|t|}\Bigg|\Big[R_{0}^{\pm}V\big(R_{V}^{+}-R_{V}^{-}\big)VR_{0}^{\pm}\Big](t^{-1/2}; x, y)\Bigg|\lesssim \begin{cases}
 |t|^{-3/2},\,\, & \,\, 0~ \text{is regular};\\ |t|^{-1/2},\,\, & \,\, 0~ \text{is a resonance};\\  |t|^{-1/2},\,\, & \,\, 0~ \text{is a resonance and/or eigenvalue}.
 \end{cases}
\end{equation*}
For the last integral term, when 0 is a regular point, we obtain the desired estimate by Lemma \ref{end}. By Lemma \ref{middleRV}, when 0 is a resonance, then
\begin{equation*}
\frac{1}{|t|}\int_{t^{-1/2}}^{t}\Bigg|\frac{d}{d\eta}\Big[R_{0}^{\pm}V\big(R_{V}^{+}-R_{V}^{-}\big)VR_{0}^{\pm}\Big](\eta; x, y)\Bigg|~d\eta\lesssim \frac{1}{|t|}\int_{t^{-1/2}}^{\infty} \eta^{-2}~d\eta\lesssim |t|^{-1/2},\,\,\, t>1.
\end{equation*}
When 0 is a resonance and~/~or an eigenvalue, then
\begin{equation*}
\frac{1}{|t|}\int_{t^{-1/2}}^{t}\Bigg|\frac{d}{d\eta}\Big[R_{0}^{\pm}V\big(R_{V}^{+}-R_{V}^{-}\big)VR_{0}^{+}\Big](\eta; x, y)\Bigg|~d\eta\lesssim \frac{1}{|t|}\int_{t^{-1/2}}^{\infty} \eta^{-2}~d\eta\lesssim |t|^{-1/2},\,\,\, t>1.
\end{equation*}
\end{proof}

\noindent{\bf Acknowledgments:} The author thanks Prof. Avy Soffer and Prof. Xiaohua Yao for their patient enlightenment. The author is supported by the China  Postdoctoral Science Fundation (No. 2019M653135) and Doctoral Fundation of Chongqing Normal University (No.20XLB018).


\end{document}